\documentclass[11pt]{article}
\usepackage[utf8]{inputenc}
\usepackage{lmodern}
\usepackage{graphicx,mathrsfs, ulem}
\usepackage[monochrome]{color}
\usepackage{amsfonts}
\usepackage{stmaryrd}
\usepackage[nice]{nicefrac}
\usepackage{amsmath,amsthm,amssymb}
\usepackage{extarrows}
\usepackage[mathlines]{lineno}
\usepackage{enumerate}
\usepackage{hyperref}
\usepackage[top=1in, bottom=1in, left=1in , right=1in]{geometry}
\usepackage{caption}
\usepackage{xifthen} 
\usepackage{wrapfig}
\usepackage[numbers, square]{natbib}
\usepackage{comment}
\usepackage{float}
\usepackage{ifthen}
\usepackage{tikz}
\usepackage{caption}
\usepackage{authblk}
\usepackage{mathtools}
\mathtoolsset{showonlyrefs}
\usetikzlibrary{arrows}
\usetikzlibrary{graphs,graphs.standard}
\usetikzlibrary{positioning,arrows.meta}
\usetikzlibrary{shapes.multipart}
\usetikzlibrary{arrows}
\usetikzlibrary{automata}

\newtheorem{theorem}{Theorem}[section]
\newtheorem{lemma}[theorem]{Lemma}
\newtheorem*{claim}{Claim}
\newtheorem{prop}[theorem]{Proposition}
\newtheorem{cor}[theorem]{Corollary}
\newtheorem{conj}{Conjecture}[section]

\theoremstyle{definition}
\newtheorem{rmq}{Remark}[section]

\newcommand{\E}[2][]{\ensuremath{\mathbb{E}_{#1} \left[#2 \right]}}

\newcommand{\Prob}[2][]{\ensuremath{\mathbb{P}_{#1} \left(#2 \right)}}
\newcommand{\red}[1]{{\color{red} #1}}
\newcommand{\Supp}[1]{\ensuremath{\text{Supp} \left( #1 \right)}}
\newcommand{\dd}{\mathrm d}
\newcommand{\N}{\mathbb{N}}

\newcommand{\eps}{\varepsilon}
\newcommand{\wmax}{w^{*}}
\newcommand{\parti}{\mathcal{Z}}

\newcommand{\Ind}{\mathcal{I}}
\newcommand{\Jind}{\mathcal{J}}
\newcommand{\cE}{\mathcal{E}}
\newcommand{\cD}{\mathcal{D}}
\newcommand{\cT}{\mathcal{T}}
\newcommand{\cZ}{\mathcal{Z}}
\newcommand{\outdeg}[1]{\ensuremath{\text{deg}^{+}(#1)}}

\DeclareMathOperator{\prog}{prog}
\DeclareMathOperator{\esssup}{ess \, sup}
\DeclareMathOperator{\Exp}{Exp}
\newcommand{\exprv}[1]{\Exp\left( #1 \right)}

\newcommand{\rif}[1]{\ensuremath{\left(#1\right)\text{-RIF tree}}}
\newcommand{\rifs}[1]{\ensuremath{\left(#1\right)\text{-RIF trees}}}
\DeclareMathOperator{\gpaf}{GPAF}

\newcommand{\redd}[1]{{\color{violet} #1}}
\newcommand{\greenn}[1]{{\color{teal} #1}}
\newcommand{\bluee}[1]{{\color{blue} #1}}

\title{Degree Distributions in Recursive Trees with Fitnesses}
\author{Tejas Iyer}
\affil{School of Mathematics, \\
University of Birmingham.\\
T.Iyer@bham.ac.uk}
\date{\today}

\begin{document}

\maketitle
\begin{abstract} 
 We study a general model of recursive trees where vertices are equipped with independent weights and at each time-step a vertex is sampled with probability proportional to its fitness function, which is a function of its weight and degree, and connects to $\ell$ new-coming vertices. Under a certain technical assumption, applying the theory of Crump-Mode-Jagers branching processes, we derive formulas for the limiting distributions of the proportion of vertices with a given degree and weight, and proportion of edges with endpoint having a certain weight. As an application of this theorem, we rigorously prove observations of Bianconi related to the evolving Cayley tree in [$\mathit{Phys. \, Rev. \, E} \; \mathbf{66}, \text{ 036116 (2002)}$]. We also study the process in depth when the technical condition~can fail in the particular case when the fitness function is affine, a model we call ``generalised preferential attachment with fitness''. We show that this model can exhibit condensation where a positive proportion of edges accumulate around vertices with maximal weight, or, more drastically, have a degenerate limiting degree distribution where the entire proportion of edges accumulate around these vertices. Finally, we  prove stochastic convergence for the degree distribution under a different assumption of a strong law of large numbers for the partition function associated with the process. 
\end{abstract}
\noindent  \bigskip
\\
{\bf Keywords:} Complex networks, generalised preferential attachment with fitness, random recursive tree, plane-oriented recursive tree, Crump-Mode-Jagers branching processes, condensation. 
\\\\
{\bf AMS Subject Classifications 2010:} 90B15, 60J20, 05C80.
\section{Introduction}
\normalem
\emph{Recursive trees} are \emph{rooted} labelled trees that are \emph{increasing}, that is, starting at a distinguished \emph{root} vertex labelled $0$, nodes are labelled in increasing order away from the root. Recursive trees generated using stochastic processes have attracted widespread study, motivated by, for example,  their applications to the evolution of languages \cite{najock-heyde}, the analysis of algorithms \cite{mahmoud1992evolution} and the study of complex networks (see, for example, \cite[Chapter~8.1]{RemcoRGCN}). Other applications include modelling the spread of epidemics, pyramid schemes and constructing family trees of ancient manuscripts (e.g. \cite[page~14]{drmota2009random}).

A common framework for randomly generating recursive trees is to have vertices arrive one at a time and connect to an existing vertex in the tree selected according to some probability distribution. In the \emph{uniform recursive tree}, introduced by Na and Rapoport in \cite{narapoport}, existing vertices are chosen uniformly at random, whilst the well-known random \emph{ordered recursive tree}, introduced by Prodinger and Urbanek in \cite{prodingerurbanek} may be interpreted as having existing vertices chosen with probability proportional to their degree. The latter model has been studied and rediscovered under various guises: under the name \emph{nonuniform recursive trees} by Szyma\'{n}ski in \cite{szymanski}, random \emph{plane oriented recursive trees} in \cite{mahmoud92,mahmoudetal93}, random \emph{heap ordered recursive trees} \cite{chen1994} and \emph{scale-free trees} \cite{bollobaspreferential,szaboalava2002,bollobasriordan04}.  
Random ordered recursive trees, or plane-oriented recursive trees, are so named because the process stopped after $n$ vertices arrive is distributed like a tree chosen at random from the set of rooted labelled trees on $n$ vertices embedded in the plane where descendants of a node are ordered from left to right. However, as first observed by Albert and Barab\'{a}si in \cite{Barabasi509} and studied in a mathematically precise way in \cite{bollobaspreferential,mori-trees-2002}, these trees, and more generally graphs evolving according to a similar mechanism, possess many interesting, non-trivial properties of real world networks. These properties include having a power law degree distribution with exponent between $2$ and $3$ and a diameter that scales logarithmically in the number of vertices. The latter may be interpreted as a `small-world' phenomenon: despite the size of the network being large, the diameter of the network remains relatively small. In this context, the fact that vertices are chosen according to their degree may be interpreted as the network showing `preference' for vertices of high degree, hence the model is often called \emph{preferential attachment}. This model has been generalised in a number of ways, to encompass the cases where vertices are chosen according to a \emph{super-linear} function of their degree \cite{Oliveira-spencer} and a \emph{sub-linear} function of their degree \cite{dereich-morters-sublinear}, or indeed any positive function of the degree \cite{rudas}. In \cite{holmgren-janson}, this model is generalised to possibly non-negative functions of the degree and is referred to as \emph{generalised preferential attachment}. 

In applications, it is often interesting to add weights to vertices as a measure of the intrinsic `fitness' of the node the vertex represents. In the \emph{Bianconi-Barab\'{a}si model}, or \emph{preferential attachment with multiplicative fitness}, introduced in \cite{bianconibarabasi2001} and studied in a mathematically precise way in \cite{borgs-chayes,dereich-unfolding,dereich-ortgiese, bhamidi,dereich-mailler-morters}, vertices are equipped with independent, identically distributed (i.i.d.) weights and connect to previous vertices with probability proportional to the product of their weight and their degree. Interestingly, as observed in \cite{bianconibarabasi2001} and confirmed rigorously in \cite{borgs-chayes,dereich-ortgiese,dereich-mailler-morters}, there is a critical condition~on the weight distribution under which this model undergoes a phase transition, resulting in a \emph{Bose-Einstein condensation}: in the limit a positive fraction of vertices accumulate around vertices of maximum weight. In a similar model known as \emph{preferential attachment with additive fitness} introduced in \cite{ergun} and studied mathematically in \cite{bhamidi,wrt2delphin,bas}, 
vertices connect to previous vertices with probability proportional to the sum of their degree (or degree minus one) and their weight. A number of other interesting preferential attachment models with fitness have been studied, including a related continuous time model which incorporates \emph{aging} of vertices \cite{vdh-aging-mult-fitness-2017} and discrete-time models with co-existing additive and multiplicative attachment rules \cite{antunovic_mossel_racz_2016, jonathan-1}. 

Adding weights also allows for a generalisation of the uniform recursive tree called the \emph{weighted recursive tree}, where now vertices connect to previous vertices with probability proportional to their weight. This model was introduced in \cite{wrt1}, for specific types of weights and in full generality in \cite{ella-umit2017}. It was also introduced independently by Janson in the case that all weights are one except the root, motivated by applications to infinite colour P\'{o}lya urns \cite{janson2018convergence}. In \cite{wrt2delphin}, S\'{e}nizergues showed that a preferential attachment tree with additive fitness with deterministic weights is equal in distribution to an associated weighted random recursive tree with random weights, an interesting link between the two classes of models. 

Motivated by applications to invasive percolation models in physics, Bianconi \cite{bianconi_cayley} introduced a similar model of growing \emph{Cayley trees}. In this model, vertices are equipped with independent weights and are either \emph{active} or \emph{inactive}. At each time-step an active vertex is chosen with probability proportional to its weight, produces $\ell$ new vertices with weights of their own and then becomes inactive. Bianconi observed that in this model, the distribution of weights of active vertices converges to a \emph{Fermi-Dirac distribution}, in contrast to the \emph{Bose distribution} that emerges in the Bianconi-Barab\'{a}si model. 

\subsection{Notation}
Generally in this paper we set $\mathbb{N}_{0} := \mathbb{N} \cup \{0\}$ and $\mathbb{R}_{+} := [0, \infty)$. We assume that $\mathbb{R}_{+}$ is equipped with the usual \emph{Borel sigma algebra}, and $\mu$ will denote a fixed probability measure on $\mathbb{R}_{+}$. Also, in general in this paper, $W$ refers to a generic $\mu$ distributed random variable. Finally, \redd{we generally refer to \emph{measurable set}, when referring to an element of the Borel sigma field.} Given \redd{such} a set $A$, we denote by $\mathbf{1}_{A}(x)$ the indicator function associated with this set, so that $\mathbf{1}_{A}(x) = 1$ if $x \in A$ and $0$ otherwise. Moreover, if $\mathbf{1}_{A}(x)$ is a random variable on a probability space $(\Omega, \mathbb{F}, \mathbb{P})$, we \redd{often} omit the dependence on $x \in \Omega$, and simply write $\mathbf{1}_{A}$. 

\subsection{Description of Model}
The goal of this paper is to present a unified model that encompasses most of the models described in the introduction above. 
In order to define the model, we first require a probability measure $\mu$ supported on $\mathbb{R}_{+}$ and a \emph{fitness function}, which is a function $f: \N_0 \times \mathbb{R}_{+} \rightarrow \mathbb{R}_+$. We consider evolving sequences of \textit{weighted oriented trees} $\cT := \left(\mathcal{T}_{t}\right)_{t \in \N_0}$; these are trees with \emph{directed edges}, where vertices have real valued weights assigned to them. The model also has an additional parameter $\ell \in \N$. We start with an initial tree $\mathcal{T}_0$ consisting of a single vertex $0$ with weight $W_0$ sampled from $\mu$. To ensure that the evolution of the model is well-defined, \redd{for brevity}, we assume $f(0,W_0) > 0$ almost surely. 
Then, we define $\mathcal{T}_{t+1}$ recursively as follows: 
\begin{enumerate}[(i)]
\item Sample a vertex $j$ from $\mathcal{T}_t$ with probability \[\frac{f(\outdeg{j,\mathcal{T}_t}/\ell, W_j)}{\parti_{t}
},\]
where $\outdeg{j,\mathcal{T}_t}$ denotes the out-degree of the vertex $j$ in the oriented tree $\cT_t$ and $\mathcal{Z}_t : = \sum_{j=0}^{\ell t} f(\outdeg{j,\mathcal{T}_t)/\ell, W_j}$ is the \emph{partition function} associated with the process. 
\item Introduce $\ell$ new vertices $t+1, t+2, \ldots, t+\ell$ with weights $W_{t+1}, W_{t+2}, \ldots, W_{t+\ell}$ sampled independently from $\mu$ and the directed edges $(j, t+1), (j, t+2), \ldots, (j,t + \ell)$ oriented towards the newly arriving vertices. We say that $j$ is the \emph{parent} of the new-coming vertices. 
\end{enumerate}
Note that, since $\ell$ new vertices are connected to a parent at each time-step, for any vertex $i$ in the tree, $\ell$ divides the out-degree of $i$. Moreover, the evolution of the out-degree of vertex $i$ with weight $W_i$ is determined by the values $(f(j, W_i))_{j \in \N_0}$. 
In general, when the distribution $\mu$, fitness function $f$ and $\ell$ are specified, we refer to this model as a $(\mu,f, \ell)$-\textit{recursive tree with independent fitnesses}, often abbreviated as a ``$\rif{\mu, f,\ell}$'' for brevity. Here `independent fitnesses' refers to the fact that the fitness associated with a given vertex does not depend on the weights of its neighbours, in contrast to, for example, the models of dynamical \emph{simplicial complexes} studied in \cite{dynamical-simplices}. 

\redd{
\begin{rmq}
If we adopt the convention that the process terminates when no vertex can be chosen in the next step, the assumption that $f(0,W_0) > 0$ almost surely may be dropped in many places in this paper, if we condition on the event that number of vertices in the tree tends to infinity as $t \to \infty$.
\end{rmq}}
\bluee{
\begin{rmq} \label{rem:non-explicit-law}
In this model, the law of the sequence $(f(k,W))_{k\in \N_0}$ is more important than the function $f$. It is possible, for example, to define this model so that $(f(k,W))_{k\in \N_0}$ is any law on $\mathbb{R}_{+}^{\mathbb{N}_0}$ depending on $W$, even if one cannot write $f$ explicitly, and as long as the sequences associated with different vertices are independent. For example, the sequence could be any stochastic process indexed by the non-negative integers, depending on an initial source of randomness $W$.
\end{rmq}
}

\subsection{Quantities of Interest Studied in this Paper} \label{sec:quant-of-interest}
In this section, we will introduce the main quantities we will be interested in studying in this paper, along with some important definitions. Note that the definitions we introduce in this section will depend on the underlying parameters of the tree, $\mu$, $f$ and $\ell$. 

In this paper we will generally be concerned with the limiting behaviour of the following quantities:
\begin{enumerate}
    \item Given a Borel set $B \subseteq \mathbb{R}_{+}$, the quantity $N_{k}(t,B)$ denotes the number of vertices $v$ in the tree $\cT_t$ with out-degree $k \ell$ and weight $W_v \in B$, that is, 
    \begin{equation} \label{eq:deg-dist}
        N_{k}(t, B) := \sum_{v \in \mathcal{T}_{t}: \deg^{+}(v, \cT_{t}) = k\ell} \mathbf{1}_{B}(W_v).
    \end{equation}
    \item Given a Borel set $B \subseteq \mathbb{R}_{+}$, the quantity $\Xi(t,B)$ denotes the number of directed edges $(v,v')$ in the tree $\cT_t$ such that $W_v \in B$, that is, 
    \begin{equation}
 \label{eq:xi-def}
    \Xi(t,B) := \sum_{(v,v') \in \mathcal{T}_{t}} \mathbf{1}_{B}(W_v).
\end{equation}
\end{enumerate}

\bluee{Now, reasoning informally and non-rigorously for a moment, suppose that $W$ takes finitely many values. In addition, suppose that for all $t \geq t'$, where one considers $t'$ to be a `large' constant,  we have $\mathcal{Z}_t = \alpha t$, and for all $k \in \mathbb{N}_{0}$ we have $N_{k}(t, \{w\}) = \E{N_{k}(t, \{w\})} = \ell t \cdot n_{k}(\{w\})$ for some value $n_{k}(\{w\})$. The latter assumptions are motivated by the intuition that the respective quantities obey strong laws of large numbers. Then, for $t \geq t'$ and $k \geq 1$ we have
\begin{linenomath*}
\begin{align*}
    \ell n_{k}(\{w\}) & = \E{\E{N_{k}(t+1, \{w\}) - N_{k}(t, \{w\})|\mathcal{T}_{t}}}
    \\ &= \Prob{\text{vertex of out-degree $k$ and weight $w$ chosen}} \\& \hspace{2cm}- \Prob{\text{vertex of out-degree $k-1$ and weight $w$ chosen}}
    \\ &= N_{k-1}(t, \{w\}) \cdot \frac{f(k-1, w)}{\cZ_t} - N_{k}(t, \{w\}) \cdot \frac{f(k,w)}{\cZ_t}
    \\ &= \frac{\ell n_{k-1}(\{w\})\cdot f(k-1,w)}{\alpha} - \frac{\ell n_{k}(\{w\})\cdot f(k,w)}{\alpha}.
\end{align*}
\end{linenomath*}
Meanwhile, for $k =0$ we have
\begin{linenomath*}
\begin{align*}
    \ell n_{0}(\{w\}) & = \E{\E{N_{0}(t+1, \{w\}) - N_{0}(t, \{w\})|\mathcal{T}_{t}}}
    \\ &= \Prob{\text{newly arriving vertex with weight $w$}} \\ & \hspace{2cm} - \Prob{\text{vertex of out-degree $0$ and weight $w$ chosen}}
    \\ &= \ell \mu(\{w\}) - N_{0}(t, \{w\}) \cdot \frac{f(0,w)}{\cZ_t}
    \\ &= \ell \mu(\{w\}) - \frac{\ell n_{0}(\{w\}) \cdot f(0,w)}{\alpha}.
\end{align*}
\end{linenomath*}
Solving the recursion from the above two displays, for all $k \in \N_0$ we have
\begin{linenomath*}
\begin{align*}
    n_{k}(\{w\}) & = \mu(\{w\}) \cdot \frac{\alpha}{f(k, w) + \alpha}\prod_{i=0}^{k-1} \frac{f(i,w)}{f(i,w) + \alpha}
    \\ & = \E{\frac{\alpha}{f(k, W) + \alpha}\prod_{i=0}^{k-1} \frac{f(i,W)}{f(i,W) + \alpha}\mathbf{1}_{\{w\}}(W)}.
\end{align*}
\end{linenomath*}
}
It is therefore reasonable to expect that the limit of $\frac{N_{k}(t,B)}{\ell t}$ belongs to \bluee{a one parameter family $p^{\lambda}_{k}(\cdot)$ indexed by a positive real number $\lambda$ such that}
\begin{equation} \label{eq:def-1-param-family}
p^{\lambda}_{k}(B) := \E{\frac{\lambda}{f(k,W) + \lambda} \prod_{i=0}^{k-1} \frac{f(i,W)}{f(i,W) + \lambda} \mathbf{1}_{B}(W)},
\end{equation}
\bluee{where the parameter $\lambda$ can be recovered by the asymptotics of the partition function, so that the limit satisfies $\lambda = \alpha > 0$. 
It is important to note that in this paper, it may be the case that the limit of $\frac{N_{k}(t,B)}{\ell t}$ belongs to this family, but we do not necessarily have a strong law for the asymptotics of the partition function. For example, this is the case if the conditions of Theorem~\ref{th:degree-dist} are satisfied, but not those of Theorem~\ref{th:strong-law-parti}. 
}\\
Now, note that for every $t \in \mathbb{N}_0$, by computing the number of directed edges $(v,v')$ in $\cT_{t}$ with $W_v \in B$ in two different ways, we have 
\begin{equation} \label{eq:xi-as-sum}
\Xi(t,B) = \sum_{k=0}^{t} \ell k N_{k}(t,B). 
\end{equation}
When we normalise by $\ell t$, if, for $k \in \mathbb{N}_0$ the limit of $\frac{N_{k}(t,B)}{\ell t}$ is $p^{\alpha}_{k}(B)$, by Fatou's lemma we get 
\begin{equation} \label{eq:edge-dist-fatou}
\liminf_{t \to \infty} \frac{\Xi(t,B)}{\ell t} \geq \sum_{k=0}^{\infty} \ell k p^{\alpha}_{k}(B),
\end{equation}
which motivates the definition of the following family \bluee{indexed by a positive real number $\lambda$, such that}:
\begin{equation} \label{eq:def-1-param-family-2}
m(\lambda,B) := \sum_{k=0}^{\infty} \ell k p^{\lambda}_{k}(B) = \ell \cdot \E{\sum_{n=1}^{\infty} \prod_{i=0}^{n-1} \frac{f(i,W)}{f(i,W) + \lambda} \mathbf{1}_{B}(W)}.
\end{equation}
Now, if the limit exists, since we add $\ell$ edges at each time-step, the limit of the measures $\Xi(t, \cdot)/\ell t$ are probability measures. However, if $m(\alpha,\cdot)$ is not a probability distribution,
we can show that there exists a measurable set $B$ such that 
\[
\limsup_{t \to \infty} \frac{\Xi(t,B)}{\ell t} > m(\lambda,B).
\]
In this case, the inequality in~\eqref{eq:edge-dist-fatou} is strict, so that, after normalising by $\ell t$, the operations of taking limits in $k$ and in $t$ in \eqref{eq:xi-as-sum} do not commute. Thus, the set $B$ has acquired additional ``mass'' in the limit, and \redd{thus this phenomenon may be regarded as \emph{condensation}}. In Section~\ref{subsec-cond} we derive an example of this in the case that $f(i,W) = g(W)i + h(W)$, where $g$ is \bluee{bounded}. This generalises the case $f(i,W) = (i+1)W$ which has already been studied in \cite{borgs-chayes,dereich-ortgiese, dereich-mailler-morters} under the name \emph{preferential attachment with multiplicative fitness}.

\subsection{Open Problems}
The discussion in Section~\ref{sec:quant-of-interest} shows that much of the analysis of this model depends on a parameter $\alpha$. We conjecture that, in general, this parameter makes $m(\lambda, \cdot)$ `as close as possible' to a probability distribution, so that
\begin{equation} \label{def:gen-malth}
\alpha =\inf{\left\{\lambda > 0 :m(\lambda, \mathbb{R}_{+}) \leq 1 \right\}} \qquad
\text{if $m(\lambda,  \mathbb{R}_{+}) < \infty$ for some $\lambda >0$},
\end{equation}
\bluee{where we follow the convention that $\inf{\varnothing} = \infty$}.
\begin{conj} \label{conj:gen-degrees}
 Let $\cT$ be a \rif{\mu, f, \ell}, with $\alpha$ as defined in \eqref{def:gen-malth}. Then, for each $k \in \N_0$ and measurable set $B$, almost surely, we have
\begin{equation}
\frac{N_{k}(t, B)}{\ell t} \xrightarrow{t \to \infty} 
\begin{cases}
p^{\alpha}_{k}(B), & \text{if $\alpha < \infty$,} \\ 
\mu(B)\mathbf{1}_{\{0\}}(k), & \text{otherwise}.
\end{cases}
\end{equation} 
\end{conj}
The conjectured limit in the case when $\alpha = \infty$ is obtained by taking the limit of $p^{\alpha}_{k}(B)$ as $\alpha \to \infty$. This limit is $0$ unless $k = 0$, in which case it is $\mu(B)$. 
\bluee{
\begin{rmq}
Conjecture~\ref{conj:gen-degrees} has a natural analogue in the setting, as in Remark~\ref{rem:non-explicit-law}, that the sequence $(f(k,W))_{k \in \mathbb{N}_{0}}$ is instead given by a general stochastic process indexed by $\mathbb{N}_{0}$, depending on an initial source of randomness $W$. In this case, the expectation in~\eqref{eq:def-1-param-family} is instead taken over evolution over all sequences, with initial $W \in B$.The techniques used in this paper translate without modification to this case; the only important feature being that the sequences corresponding to different vertices are independent of each other. 
\end{rmq}}
\redd{
\begin{rmq}
Its important to note the order of the quantifiers in Conjecture~\ref{conj:gen-degrees}: given a sequence $(\mu_{j})_{j \in \mathbb{N}}$ of random measures on $\mathbb{R}_{+}$, such that, for any measurable set $B$, $\lim_{j \to \infty} \mu_{j}(B) = \mu_{\infty}(B)$ almost surely; it not necessarily the case that, \emph{almost surely, for all measurable sets $B$} we have $\lim_{j \to \infty} \mu_{j}(B) = \mu_{\infty}(B)$. However, it is the case that almost surely, $\mu_{j} \rightarrow \mu_{\infty}$ in the weak topology. This uses the fact that there exists a countable family of measurable sets such that any open set in $\mathbb{R}_{+}$ may be expressed as a disjoint, countable union of elements of this family\footnote{For example, one may take the set of all \emph{Dyadic intervals}, with endpoints of the form $j \cdot 2^{-n}$, $(j+1) \cdot 2^{-n}$, where $j, n \in \N_{0}$.} and then an application of the Portmanteau theorem. This approach is also used in this paper in the proof Corollary~\ref{cor:cond-weak-conv}.  
\end{rmq}
}

The discussion in Section~\ref{sec:quant-of-interest} described the quantity $\alpha$ as being closely related to the partition function. As a result, we also conjecture:

\begin{conj} \label{conj:strong-law-parti}
 Let $\cT$ be a \rif{\mu, f, \ell}, with $\alpha$ as defined in \eqref{def:gen-malth}. Then we have
\[\frac{\cZ_{t}}{t} \xrightarrow{t \to \infty} \alpha, \quad \text{ almost surely.} \]
\end{conj}

\subsection{Important Technical Conditions and Overview of Results}
\label{sec:overview} 
In this paper, we make partial progress towards the proofs of Conjecture~\ref{conj:gen-degrees} and Conjecture~\ref{conj:strong-law-parti}. We will refer to the following technical conditions: \hypertarget{c1}{} 
\begin{itemize}
    \item [\hyperlink{c1}{\textbf{C1}}] With $m(\lambda, \cdot)$ as defined in \eqref{eq:def-1-param-family-2}, there exists some $\lambda > 0$ such that 
    \begin{equation} \label{eq:cond-c1}
    1 < m(\lambda, \mathbb{R}_{+}) < \infty.
    \end{equation}
    Under this condition, by monotonicity, there exists a unique $\alpha > 0$ such that $m(\alpha, \mathbb{R}_{+}) = 1$, we call this the \emph{Malthusian parameter} associated with the process. \hypertarget{c2}{}
    \item [\hyperlink{c2}{\textbf{C2}}] There exists $\alpha > 0$ such that 
    \begin{equation} \label{eq:cond-c2}
    \lim_{t \to \infty} \frac{\cZ_t}{t} = \alpha.
    \end{equation}
\end{itemize}
Note that in~\eqref{def:gen-malth}, Conditions~\hyperlink{c1}{\textbf{C1}} and \hyperlink{c2}{\textbf{C2}}, we use the same symbol $\alpha$. This is because we conjecture that these coincide in general. In general, as we only assume either \hyperlink{c1}{\textbf{C1}} or \hyperlink{c2}{\textbf{C2}} at a time, the definition will be clear from context.

The paper will be structured as follows: \\\\
{\bf Section~\ref{sec:cont-time}:} We analyse the model under Condition~\hyperlink{c1}{\textbf{C1}}. 
        \begin{itemize}
            \item  In Theorem~\ref{th:degree-dist} we prove Conjecture~\ref{conj:gen-degrees} under Condition~\hyperlink{c1}{\textbf{C1}}, and as a consequence, in Theorem~\ref{th:edge-asymptotics} we show that, \redd{for any measurable set $B$}, $\Xi(t,B)/\ell t$ converges \redd{almost surely} to $m(\alpha, B)$. 
            \item In Theorem~\ref{th:strong-law-parti} we derive a condition under which~\hyperlink{c1}{\textbf{C1}} implies~\hyperlink{c2}{\textbf{C2}}. In particular, this proves Conjecture~\ref{conj:strong-law-parti} under this condition and~\hyperlink{c1}{\textbf{C1}}.
            \item The approaches used in this section are well-established, applying classical results in the theory of \emph{Crump-Mode-Jagers branching processes}, in a similar manner to the approaches taken by the authors of \cite{rudas,holmgren-janson,bhamidi, dereich-mailler-morters}. Nevertheless, these theorems have novel applications: we apply these theorems to the evolving Cayley tree considered by Bianconi in Example~\ref{wct} and the \emph{weighted random recursive tree}.
        \end{itemize}
{\bf Section~\ref{sec:gpaf}:} We analyse a particular case of the model when the fitness function \bluee{is such that} $f(i,W) = g(W)i + h(W)$, which we call the \emph{generalised preferential attachment tree with fitness} ($\gpaf$-tree).
This extends the existing models of preferential attachment with additive fitness, i.e., $f(i,W) = i + 1 + W$, and multiplicative fitness, i.e., $f(i,W) = (i+1)W$. When the functions $g$ \bluee{is} non-decreasing, we also treat the cases where Condition~\hyperlink{c1}{\textbf{C1}} can fail.  Let $\alpha$ be as defined in~\eqref{def:gen-malth}, and also define $\Lambda := \left\{\lambda > 0: m(\lambda, \mathbb{R}_{+}) < \infty \right\}$.
        \begin{itemize}
            \item  We consider the situation in which Condition~\hyperlink{c1}{\textbf{C1}} fails by having $m(\lambda,\mathbb{R}_{+}) \leq 1$ for all $\lambda \in \Lambda$. In this case, $m(\lambda, \mathbb{R}_{+})$ \bluee{is finite} for some $\lambda > 0$, but never exceeds $1$, so that $m(\alpha, \mathbb{R}_{+}) \leq 1$. In Theorem~\ref{thm:cond-edge-dist} we prove Conjecture~\ref{conj:gen-degrees} and Conjecture~\ref{conj:strong-law-parti} in this case, showing, in particular, that if $m(\alpha, \mathbb{R}_{+}) < 1$ the $\gpaf$-tree exhibits a \emph{condensation} phenomenon.
            \item Alternatively, Condition~\hyperlink{c1}{\textbf{C1}} may fail by having $\alpha = \infty$. Theorem~\ref{th:conn-transitions} also confirms Conjecture~\ref{conj:gen-degrees} in this case, 
            showing that the limiting degree distribution is \emph{degenerate}: almost surely the proportion of leaves in the tree tends to $1$. Moreover, we show that the \emph{fittest take all} of the mass of the distribution of edges according to weight, in the sense that
            \bluee{a proportion of edges tending to $1$ accumulates around the sets of vertices with weights conferring higher and higher fitness.}
            \item The techniques in this section are inspired by the coupling techniques exploited in \cite{borgs-chayes} and \cite{dereich-mailler-morters}, and extend the well known phase transition associated with the model of preferential attachment with multiplicative fitnesses studied in \cite{borgs-chayes,dereich-ortgiese,dereich-mailler-morters}. This generalisation shows that the phase-transition depends on the parameter $h$ too, so that, in some circumstances, condensation occurs, but vanishes if $h$ is increased enough pointwise (see Section~\ref{subsub:gpaf-cond-implic}). This is interesting because $h(W)$ may be interpreted as the `initial' popularity of a vertex when it arrives in the tree, showing that in order for the condensation to occur, there needs to be sufficiently many vertices of `low enough' initial popularity.  
            As far as the author is aware, these results are not only novel in the mathematical literature, but also in the general scientific literature concerning complex networks. 
        \end{itemize}
    {\bf Section~\ref{sec:gen-conv}:} We analyse the model under Condition~\hyperlink{c2}{\textbf{C2}}, proving general results for the distribution of vertices with a given degree and weight. 
     \begin{itemize}
         \item If the term $\alpha$ in Condition~\hyperlink{c2}{\textbf{C2}} is finite, Theorem~\ref{th:general-convergence} and Theorem~\ref{th:deg-degs2} confirm a weaker analogue of Conjecture~\ref{conj:gen-degrees} under this condition.
         \item The techniques used in this section are similar to those used in the proof of  \cite[Theorem~6]{dynamical-simplices}, however, in this instance we present a considerably shorter proof.
     \end{itemize}

\section{Analysis of \texorpdfstring{$\rifs{\mu, f, \ell}$}{} assuming \texorpdfstring{\protect\hyperlink{c1}{{\textbf{C1}}}}{}}
\label{sec:cont-time}
In order to apply Condition~\hyperlink{c1}{{\textbf{C1}}} in this section, we study a branching processes with a \emph{family tree} made up of individuals and their offspring whose distribution is 
identical to the discrete time model at the times of the branching events. In Section~\ref{subsec:cont-time-desc}, we describe this continuous time model, state Theorem~\ref{th:degree-dist} and state and prove Theorem~\ref{th:edge-asymptotics}. In Section~\ref{subsection-cmj} we include the relevant theory of \emph{Crump-Mode-Jagers} branching processes and use this to prove Theorem~\ref{th:degree-dist}. In Section~\ref{subsec:slln-parti} we apply the same theory, along with some technical lemmas to state and prove a strong law of large numbers for the partition function in Theorem~\ref{th:strong-law-parti}. We conclude the section with some interesting examples in Section~\ref{subsec:examples}. 

\subsection{Description of Continuous Time Embedding} \label{subsec:cont-time-desc}
In the continuous time approach, we begin with a population consisting of a single vertex $0$ with weight $W_0$ sampled from $\mu$ and an associated exponential clock with parameter $f(0,W_0)$. Then recursively, when the $i$th birth event occurs in the population, with the ringing of an exponential clock associated to vertex $j$: 
\begin{enumerate}[(i)]
\item Vertex $j$ produces offspring $\ell (i-1) + 1, \ldots, \ell i$ with independent weights $W_{\ell (i-1) +1}, \ldots, W_{\ell i}$ sampled from $\mu$ and exponential clocks with parameters $f(0, W_{\ell (i-1) +1}), \ldots, f(0, W_{\ell i})$. 
\item Suppose the number of offspring of $j$ before the birth event was $m$, so that its out-degree in the family tree is $m$. Then, the exponential random variable associated with $j$ is updated to have rate $f(m/\ell + 1, \bluee{W_{j}})$. If $f(m/\ell + 1, \bluee{W_{j}}) = 0$, then $j$ ceases to produce offspring and we say $j$ has \textit{died}.
\end{enumerate}
Now, if we let $\cZ_{i-1}$ denote the sum of rates of the exponential clocks in the population when the population has size $i-1$, the probability that the clock associated with $j$ is the first to ring is $f(m/\ell, W_j)/\cZ_{i-1}$. Hence, the family tree of the continuous time model at the times of the birth events $(\sigma_{i})_{i \geq 0}$ has the same distribution as the associated \rif{\mu, f, \ell}. The continuous time branching process is actually a Crump-Mode-Jagers branching process, which we will describe in more depth in Section~\ref{subsection-cmj}. 

To describe the evolution of the degree of a vertex in the continuous time model, we define the pure birth process with underlying probability space $(\Omega, \mathcal{F}, \mathbb{P})$ and state space $\ell \mathbb{N}$ as follows: first sample a weight $W$ and set $Y(0) = 0$. Let $\mathbb{P}_{w}$ denote the probability measure associated with the process when the weight sampled is $w$. Then, define the birth rates of $Y$ such that
\begin{equation} \label{def:yw}
    \Prob[w]{Y(t + h) = (k+1)\ell \; | \; Y(t) = k\ell} = f(k, w) h + o(h).
\end{equation} 
In other words, the time taken to jump from $k\ell$ to $(k + 1)\ell$ is exponentially distributed with parameter $f(k,w)$. 

Let $\rho$ 
denote the point process corresponding to the jumps in $Y$ and denote by $\E[w]{\rho(\cdot)}$ the intensity measure when the weight $W = w$. Also, denote by $\hat{\rho}_w$ the Laplace-Stieltjes transform, i.e., 
\[
\hat{\rho}_w(\lambda) : = \int_{0}^{\infty}e^{-\lambda t} \E[w]{\rho(\dd t)}.
\]
Note that, by Fubini's theorem, we have
\begin{linenomath*}
\begin{align} \label{eq:laplace2}
\hat{\rho}_w(\lambda) = \int_{0}^{\infty} \left(\int_{t}^{\infty} \lambda e^{- \lambda s} \dd s \right)\, \E[w]{\rho(\dd t)} & =\int_{0}^{\infty} \lambda e^{-\lambda s} \left(\int_{0}^{s} \E[w]{\rho(\dd t)}\right) \dd s \\ \nonumber & = \int_{0}^{\infty} \lambda e^{-\lambda s} \E[w]{Y(s)}\dd s.
\end{align}
\end{linenomath*}
Moreover, if we write $\tau_{k}$ for the time of the $k$th jump in $Y$, we have $\rho = \sum_{k=0}^{\infty} \ell \delta_{\tau_{k}}.
$
Note that, if the weight of $Y$ is $w$, \bluee{then} $\tau_{k}$ is distributed as a sum of independent exponentially distributed random variables with rates $f(0,w), f(1,w), \ldots, f(k-1,w)$, \bluee{where} we follow the convention that an exponential distributed random variable with rate $0$ is $\infty$.
Thus, we have that 
\begin{equation} \label{eq:laplace11}
\hat{\rho}_w(\lambda) = \ell \sum_{n=1}^{\infty} \E[w]{e^{-\lambda \tau_{n}}} = 
\ell\sum_{n=1}^{\infty} \prod_{i=0}^{n-1} \frac{f(i,w)}{f(i,w) + \lambda},
\end{equation} 
where in the last equality we have used the facts that a Laplace-Stieltjes transform of a convolution of measures is the product of Laplace-Stieltjes transforms and the Laplace-Stieltjes transform $\hat{X}(\lambda)$ of an exponential distributed random variable with parameter $s$ is $\int_{0}^{\infty} e^{-\lambda t} s e^{- s t} \dd t = \frac{s}{s+\lambda}$. Therefore, we see that $\E{\hat{\rho}_{W}(\lambda)} = m(\lambda, \mathbb{R}_{+})$ as defined in \eqref{eq:cond-c1}, and Condition~\hyperlink{c1}{\textbf{C1}} implies that there exists some $\lambda > 0$ such that $1 < \E{\hat{\rho}_{W}(\lambda)} < \infty$. In addition, the Malthusian parameter $\alpha$ appearing in Condition~\hyperlink{c1}{\textbf{C1}} is the unique \bluee{positive real number} such that 
\begin{equation} \label{eq:malth-def}
    \E{\hat{\rho}_{W}(\alpha)} = m(\alpha, \mathbb{R}_{+}) = \ell \cdot \E{\sum_{n=1}^{\infty} \prod_{i=0}^{n-1} \frac{f(i,W)}{f(i,W) + \alpha}} = 1.
\end{equation}
Our first result is the following: 
\begin{theorem}[Convergence of the Degree Distribution under \hyperlink{c1}{\textbf{C1}}] \label{th:degree-dist}
Let $\cT$ be a $\rif{\mu, f, \ell}$ satisfying~\hyperlink{c1}{\textbf{C1}} with Malthusian parameter $\alpha$. Then, \redd{for any measurable set $B \subseteq \mathbb{R}_+$}, with $N_{k}(t, B)$ as defined in \eqref{eq:deg-dist} and $p^{\alpha}_{k}(B)$ as defined in \eqref{eq:def-1-param-family}, we have
\begin{equation}
\frac{N_{k}(t, B)}{\ell t} \xrightarrow{t \to \infty}  p^{\alpha}_{k}(B),
\end{equation} 
almost surely.
\end{theorem}
The limiting formula for Theorem~\ref{th:degree-dist} has appeared in a number of contexts, and generalises many known results. Under Condition~\hyperlink{c1}{\textbf{C1}} this result was proved by Rudas, T\'{o}th and Valk\'{o} \cite{rudas} in the case that $W$ is constant and $\ell = 1$. The cases $f(i,W) = W(i+1)$ and $f(i,W) = i+1+W$ with $\ell = 1$ correspond respectively to the preferential attachment models with multiplicative and additive fitness mentioned in the introduction. In the multiplicative model, the result was first proved in \cite{borgs-chayes} and later in \cite{bhamidi}. In \cite{bhamidi}, Bhamidi also first proved the result for the case $f(i,W) = i + 1 + W$.  These models are examples of the generalised preferential attachment tree with fitness, which we study in more depth in Section~\ref{sec:gpaf}. Finally, the case $f(i,W) = W$, $\ell = 1$ corresponds to a model of weighted random recursive trees (see Example~\ref{ex:wrrt}). We postpone the proof of Theorem~\ref{th:degree-dist} to the end of Section~\ref{subsection-cmj}.

\begin{rmq}
 The limiting value has an interesting interpretation as a generalised geometric distribution. Consider an experiment where $W$ is sampled from $\mu$ and, given $W$, coins are flipped, where the probability of heads in the $i$th coin flip is proportional to $f(i,W)$ and tails proportional to $\alpha$. Then, the limiting distribution in Theorem~\ref{th:degree-dist} is the distribution of first occurrence of tails. Note that, by \hyperlink{c1}{\textbf{C1}}, the probability of infinite sequences of heads is $0$. 
\end{rmq}

\begin{rmq}
Note that $Y(t) < \infty$ for all $t \geq 0$ almost surely if $\tau_{\infty} := \lim_{k \to \infty} \tau_{k} = \infty$ almost surely.
The latter is satisfied if there exists $\lambda > 0 $ such that for almost all $w$
\[\E[w]{e^{-\lambda \tau_\infty}} = \lim_{n \to \infty} \E[w]{e^{-\lambda \tau_n}} = \lim_{n \to \infty}\prod_{i=0}^{n} \frac{f(i, w)}{f(i,w) + \lambda} = 0,
\]
which is implied by \hyperlink{c1}{\textbf{C1}}. In the literature concerning pure-birth Markov chains, this property is known as non-explosivity.
\end{rmq}

\begin{rmq} \label{rem:robustness}
In this paper, we have considered the case where the function $f$, and thus the birth process $Y$ as defined in~\eqref{def:yw}, depends on a single random variable $W$ taking values in $\mathbb{R}_{+}$. However, there is no loss of generality in assuming the random variable $W$ takes values in an arbitrary measure space, so long as the function $f$ is measurable. In particular, we may consider the case where the weight is given by a vector $(W_1,W_2)$ where $W_1$ and $W_2$ are possibly correlated random variables. 
\end{rmq}

\redd{Now, recall the definitions of $\Xi(t, \cdot)$ from \eqref{eq:xi-def} and $m(\alpha, \cdot)$ from \eqref{eq:def-1-param-family-2}. In the case that $m(\alpha, \cdot)$ is a probability distribution, the almost sure convergence of $N_{k}(t,B)/\ell n$ to $p^{\alpha}_{k}(B)$ for any measurable set $B$ is enough to imply that for any measurable set $B$ the quantity $\Xi(t,B)$ converges almost surely to $m(\alpha, B)$. Note that this condition is weaker than directly assuming \hyperlink{c1}{\textbf{C1}}. In particular, we have the following.}
%

\begin{theorem} \label{th:edge-asymptotics}
Assume $\cT$ is a \rif{\mu,f, \ell} with limiting degree distribution of the form $(p^{\alpha}_{k}(\cdot))_{k \in \mathbb{N}_{0}}$ and such that $m(\alpha, \mathbb{R}_{+}) = 1$. Then, for any measurable set $B$ we have 
\[
\frac{\Xi(t,B)}{\ell t} \xrightarrow{t \to \infty} m(\alpha, B),
\]
almost surely.
\end{theorem}
To prove this theorem,  we will apply the following elementary bound: for any two sequences $(a_n)_{n \in \mathbb{N}}, (b_{n})_{n \in \mathbb{N}}$,
such that either $\liminf_{n \to \infty} a_n > -\infty$ or $\limsup_{n \to \infty} b_n < \infty$, 
we have
\begin{equation} \label{eq:to-delete}
\liminf_{n \to \infty}(a_n + b_n) \leq \liminf_{n \to \infty} a_n + \limsup_{n \to \infty} b_n \leq \limsup_{n \to \infty}(a_n + b_n).
\end{equation}

\begin{proof}[Proof of Theorem~\ref{th:edge-asymptotics}]
Recall that, by \eqref{eq:xi-as-sum}, for each $t$, we have $\Xi(t,B) = \sum_{k=1}^{t} k \ell N_{k}(t,B)$. Also note that 
\begin{linenomath*}
\begin{align*}
\sum_{k=0}^{\infty} k \ell p^{\alpha}_{k}(B)
& = \ell \cdot \E{\left(\sum_{k=1}^{\infty} \frac{k\alpha}{f(k,W) + \alpha} \prod_{i=0}^{k-1} \frac{f(i,W)}{f(i,W) + \alpha} \right) \mathbf{1}_{B}(W)} \\ & = \ell \cdot \E{\left(\sum_{k=1}^{\infty} k \cdot \left(1 - \frac{f(k,W)}{f(k,W) + \alpha}\right) \prod_{i=0}^{k-1} \frac{f(i,W)}{f(i,W) + \alpha} \right) \mathbf{1}_{B}(W)}
\\ &  = \ell \cdot \E{\sum_{k=1}^{\infty}  \left(k\prod_{i=0}^{k-1} \frac{f(i,W)}{f(i,W) + \alpha} - k\prod_{i=0}^{k} \frac{f(i,W)}{f(i,W) + \alpha}\right)\mathbf{1}_{B}(W)} \\ & =
\ell \cdot \E{\left(\sum_{k=1}^{\infty} \prod_{i=0}^{k-1} \frac{f(i,W)}{f(i,W) + \alpha}\right) \mathbf{1}_{B}(W)}  = m(\alpha, B),
\end{align*} 
\end{linenomath*}
where the second to last equality follows from the telescoping nature of the sum inside the expectation. 
Thus, by Fatou's lemma, almost surely we have
\begin{equation} \label{eq:infbound}
m(\alpha, B) = \sum_{k=0}^{\infty} k \ell p^{\alpha}_{k}(B) = \sum_{k=0}^{\infty} k \ell \liminf_{t \to \infty} \frac{N_{k}(t,B)}{\ell t} \leq \liminf_{t \to \infty} \frac{\Xi(t,B)}{\ell t};
\end{equation}
and likewise, almost surely, $\liminf_{t \to \infty} \frac{\Xi(t,B^c)}{\ell t} \geq m(\alpha, B^c)$.
Now, since we add $\ell$ edges at every time-step, $\Xi(t,\mathbb{R}_{+}) = \ell t$. Thus, by~\eqref{eq:to-delete} 
\begin{linenomath*}
\begin{align*}
1 = \liminf_{t \to \infty} \left(\frac{\Xi(t, B)}{\ell t} + \frac{\Xi(t,B^{c})}{\ell t}\right)
& \leq \liminf_{t \to \infty} \frac{\Xi(t, B^c)}{\ell t} + \limsup_{t \to \infty} \frac{\Xi(t,B)}{\ell t} \\ & \leq \limsup_{t \to \infty} \left(\frac{\Xi(t, B)}{\ell t} + \frac{\Xi(t,B^{c})}{\ell t}\right) = 1.
\end{align*}
\end{linenomath*}
But, $m(\alpha, \cdot)$ is a probability measure, this is only possible if 
\begin{equation} \label{eq:supbound}
\liminf_{t \to \infty} \frac{\Xi(t, B^c)}{\ell t} = m(\alpha, B^{c}) \text{ and } \limsup_{t \to \infty} \frac{\Xi(t,B)}{\ell t} = m(\alpha, B) \text{ almost surely.} 
\end{equation}
Combining~\eqref{eq:infbound} and \eqref{eq:supbound} completes the proof.
\end{proof}

\subsection{Crump-Mode-Jagers Branching Processes}
\label{subsection-cmj}
In the continuous time setting, it is convenient to not only identify individuals of the branching process according to the order they were born, but also record their lineage, in such a way that the labelling encodes the structure of the tree. Therefore we also identify individuals of the branching process with elements of the infinite \emph{Ulam-Harris} tree $\mathcal{U} : = \bigcup_{n \geq 0} \N^{n}$, where $\N^{0} = \varnothing$ is the \emph{root}. In this case, an individual $u = u_1u_2\ldots u_k$ is to be interpreted recursively as the $u_k$th child of $u_1 \ldots u_{k-1}$. For example, $1, 2, \ldots$ represent the offspring of $\varnothing$. 

In \emph{Crump-Mode-Jagers (CMJ)} branching processes, individuals $u \in \mathcal{U}$ are equipped with independent copies of a random point process $\xi$ on $\mathbb{R}_+$. The point process $\xi$ associates \textit{birth times} to the offspring of a given individual, and we also may assume that $\xi$ has some dependence on a random weight $W$ associated with that individual. The process, together with birth times may be regarded as a random variable in the probability space $(\Omega, \Sigma, \mathbb{P}) = \prod_{x \in \mathcal{U}} (\Omega_{x}, \Sigma_{x}, \mathbb{P}_{x})$ where each $(\Omega_{x}, \Sigma_{x}, \mathcal{P}_{x})$ is a probability space with $(\xi_{x}, W_x)$ having the same distribution as $(\xi,W)$. We denote by $(\sigma_{i}^{x})_{i \in \N}$ points ordered in the point process $\xi_{x}$ and, for brevity, assume that $\xi(\{0\}) = 0$. We also drop the superscript when referring to the point process associated to $\varnothing$, so that $\sigma_{i} := \sigma^{\varnothing}_i$. Now, we set $\sigma_{\varnothing} : = 0$ and recursively, for $x \in \mathcal{U}$, $\sigma_{xi} := \sigma_{x} + \sigma^{x}_{i}$. Finally, we set $\mathbb{T}_{t} = \{x \in \mathcal{U}: \sigma_{x} \leq t\}$ and note that for each $t \geq 0$, $\mathbb{T}_{t}$ may be identified with the \emph{family tree} of the process in the natural way.
Informally, $\mathbb{T}_{t}$ can be described as follows: at time zero, there is one vertex $\varnothing$, which reproduces according to $(\xi_{\varnothing}, W_{\varnothing})$. Thereafter, at times corresponding to points in $\xi_{\varnothing}$, descendants of $\varnothing$ are formed, which in turn produce offspring according to the same law. A crucial aspect of the study of CMJ processes are \emph{characteristics} $\phi_{x}$ associated to each element $x \in \mathcal{U}$. For $x \in \mathcal{U}$, let $\mathcal{U}_{x} := \left\{xu: u \in \mathcal{U}\right\}$. Then, the processes $\phi_{x}$ are identically distributed, non-negative stochastic processes on the space $(\Omega, \Sigma, \mathbb{P})$ associated with individuals $x$, which may depend on $(\xi_{z}, W_{z})_{z \in \mathcal{U}_{x}}$. Intuitively, these are processes that track `characteristics' not only of the individual $x$, but on its potential offspring $\{xy: y \in \mathcal{U}\}$.
We then define the \emph{general branching process counted with characteristic} as
\[
Z^{\phi}(t) := \sum_{x \in \mathcal{U}: \sigma_x \leq t} \phi_x(t-\sigma_x);
\]
thus this function keeps a `score' of characteristics of individuals in the family tree associated with the process up to time $t$.
Let $\nu$ be the intensity measure of $\xi$, that is,  $\nu(B) := \E{\xi(B)}$ for measurable sets $B \subseteq \mathbb{R}_{+}$. 
A crucial parameter in the study of CMJ processes is the \emph{Malthusian parameter} $\alpha$ defined as the solution (if it exists) of
\[
\E{\int_{0}^{\infty} e^{-\alpha u} \xi(\dd u)} = 1.\]
Assume that $\nu$ is not supported on any lattice, i.e., for any $h > 0$ $\Supp{\nu} \subsetneq \{ 0, h, 2h, \ldots\}$, and that the first moment of $e^{-\alpha u} \nu(\dd u)$ is finite, i.e., $\int_{0}^{\infty} u e^{-\alpha u} \nu(\dd u) < \infty$. Nerman \cite{nerman_81} proved the following theorem.  
\begin{theorem}[{\cite[Theorem~6.3]{nerman_81}}] \label{th:Nerman}
Suppose that there  exists $\lambda < \alpha$ satisfying
\begin{equation} \label{cond1}
\E{\int_{0}^{\infty} e^{-\lambda s} \xi(\dd s)} < \infty. 
\end{equation}
Then, for any two c\`adl\`ag characteristics $\phi^{(1)}, \phi^{(2)}$ such that $\E{\sup_{t \geq 0} e^{-\lambda t} \phi^{(i)}(t)} < \infty, \; i=1, 2,$
we have 
\[ \lim_{t \to \infty} \frac{Z^{\phi^{(1)}}(t)}{Z^{\phi^{(2)}}(t)} = \frac{\int_0^\infty \mathrm e^{-\alpha s}\E{\phi^{(1)}(s)} \dd s}{\int_0^\infty \mathrm e^{-\alpha s}\E{\phi^{(2)}(s)} \dd s}, \]
almost surely on the event $\{\left|\mathbb{T}_{t}\right| \rightarrow \infty\}$. 
\end{theorem}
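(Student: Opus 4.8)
The plan is to deduce the ratio limit from the single-characteristic strong convergence theorem that forms the core of Nerman's analysis. That is, I would first establish the existence of a nonnegative random variable $W$, \emph{not} depending on the characteristic, with $\{W > 0\} = \{|\mathbb{T}_t| \to \infty\}$ up to a null set, such that for every admissible characteristic $\phi$ (one with $\E{\sup_{t \geq 0} e^{-\lambda t}\phi(t)} < \infty$),
\[
e^{-\alpha t} Z^{\phi}(t) \xrightarrow{t \to \infty} \frac{\int_0^\infty e^{-\alpha s}\E{\phi(s)}\,\dd s}{\beta}\, W \qquad \text{almost surely},
\]
where $\beta := \int_0^\infty s\, e^{-\alpha s}\,\nu(\dd s) \in (0,\infty)$ by the finite-first-moment hypothesis. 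Granting this, the theorem is immediate: applying it to $\phi^{(1)}$ and $\phi^{(2)}$ and dividing, the common factor $W/\beta$ cancels, and on the survival event $\{|\mathbb{T}_t| \to \infty\} = \{W > 0\}$ the quotient is well defined and equals the stated ratio of integrals. So the entire content lies in the single-characteristic limit.

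For the mean version, I would use a first-generation (one-step renewal) decomposition. Conditioning on the reproduction of the root, the subtree hanging off each child born at time $s$ is an independent copy of the whole process delayed by $s$, which gives $\E{Z^\phi(t)} = \E{\phi(t)} + \int_0^t \E{Z^\phi(t-s)}\,\nu(\dd s)$. Writing $\tilde m(t) := e^{-\alpha t}\E{Z^\phi(t)}$ and $\tilde g(t) := e^{-\alpha t}\E{\phi(t)}$, this becomes the renewal equation $\tilde m = \tilde g + \tilde m * (e^{-\alpha \cdot}\nu)$, in which $e^{-\alpha s}\nu(\dd s)$ is a probability measure precisely because $\alpha$ is the Malthusian parameter. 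Since $\nu$ is non-lattice and $\tilde g$ is directly Riemann integrable (using $\E{\sup_t e^{-\lambda t}\phi(t)} < \infty$ with $\lambda < \alpha$), the key renewal theorem yields $\tilde m(t) \to \beta^{-1}\int_0^\infty \tilde g(s)\,\dd s$, which is the expectation version of the limit above.

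The random factor $W$ arises as the limit of Nerman's intrinsic martingale, which supplies the almost-sure upgrade. I would introduce the stopping line $\mathcal{I}_t$ of individuals born after time $t$ whose parent was born by time $t$, and the process $W_t := \sum_{x \in \mathcal{I}_t} e^{-\alpha \sigma_x}$; this is a nonnegative martingale with respect to the natural filtration along the line. Condition \eqref{cond1} with the strict inequality $\lambda < \alpha$ furnishes the uniform integrability (an $L\log L$-type bound), so $W_t \to W$ almost surely and in $L^1$ with $\E{W} = 1$, and $\{W > 0\}$ coincides with survival. To pass from the mean to the almost-sure statement, I would cut the population at $\mathcal{I}_t$, splitting $Z^\phi(t)$ into the contributions of the independent delayed subtrees hanging off the line; averaging these against the weights $e^{-\alpha \sigma_x}$ produces a strong law that replaces $\E{Z^\phi}$ by its renewal limit while the weights assemble into $W$.

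The main obstacle is exactly this last upgrade. Individuals born close to real time $t$ contribute non-negligibly to $Z^\phi(t)$ and cannot be discarded, so one must dominate these boundary contributions uniformly in $t$; this is precisely where the strictness $\lambda < \alpha$ in \eqref{cond1} is essential, as it buys the exponential slack needed both to control the martingale tail and to dominate the $\sup_t e^{-\lambda t}\phi(t)$-type terms. Verifying the direct Riemann integrability and the requisite uniform integrability, rather than the renewal equation itself, is the genuine technical heart of the argument.
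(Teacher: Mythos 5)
The paper does not prove this statement: it is imported verbatim as Theorem 6.3 of Nerman's 1981 paper, so there is no internal proof to compare against. Your outline is, in substance, the strategy of Nerman's own argument — the three ingredients you isolate (a characteristic-independent limit variable $W$ for $e^{-\alpha t}Z^{\phi}(t)$; the mean asymptotics via the renewal equation $\tilde m = \tilde g + \tilde m * (e^{-\alpha\cdot}\nu)$ and the key renewal theorem, using that $e^{-\alpha s}\nu(\dd s)$ is a non-lattice probability measure; and the almost-sure upgrade via the intrinsic martingale $W_t = \sum_{x\in\mathcal I_t} e^{-\alpha\sigma_x}$ on the coming generation, with the ratio theorem following by cancellation of $W$) are exactly the right ones, and you correctly locate where the hypothesis $\lambda<\alpha$ enters. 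As it stands, however, this is a roadmap rather than a proof: the steps you defer are precisely the nontrivial ones. In particular, (i) the direct Riemann integrability of $\tilde g(t)=e^{-\alpha t}\E{\phi(t)}$ must actually be deduced from $\E{\sup_t e^{-\lambda t}\phi(t)}<\infty$ together with the gap $\alpha-\lambda>0$ (boundedness of $e^{-\lambda t}\phi(t)$ alone does not give DRI; one needs the exponential envelope $e^{-(\alpha-\lambda)t}\sup_s e^{-\lambda s}\phi(s)$ and a measurability/c\`adl\`ag argument); (ii) the uniform integrability of $(W_t)$ and the identification $\{W>0\}=\{|\mathbb T_t|\to\infty\}$ up to null sets each require an argument (the latter via a zero--one dichotomy for the martingale limit conditioned on survival), and the claim that \eqref{cond1} "furnishes an $L\log L$-type bound" is asserted, not shown; and (iii) the decomposition of $Z^{\phi}(t)$ over the stopping line leaves a boundary term from individuals born in $(t-T,t]$ whose domination, uniformly in $t$, is the technical heart and is only named here. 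None of these is a wrong turn — they are all carried out in Nerman's paper — but a referee would not accept the proposal as a self-contained proof of the theorem.
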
 
Recall the definition of $\rho$ as the point process associated with the jumps in the process $Y$ defined in \eqref{def:yw}.
Then, the continuous time model outlined in Section~\ref{subsec:cont-time-desc} is a CMJ process having $\rho$ as its associated random point process and weight $W$. In this case, the Malthusian parameter is given by $\alpha$ in \eqref{eq:malth-def} and moreover, Condition~\hyperlink{c1}{\textbf{C1}} implies that the first moment $\int_{0}^{\infty} t e^{-\alpha t} \hat{\rho}_{\mu}(\dd t)$ \bluee{is finite}.

Theorem~\ref{th:degree-dist} is now an immediate application of Theorem~\ref{th:Nerman}.
\begin{proof}[Proof of Theorem~\ref{th:degree-dist}]
\sloppy Consider the continuous time branching process outlined in Section~\ref{subsec:cont-time-desc} and denote by $\sigma'_1 < \sigma'_2 \cdots$ the times of births of individuals in the process. Then, $\cT_n$ has the same distribution as the family tree $\mathbb{T}_{\sigma'_n}$. 
For any measurable set $B \subseteq \mathbb{R}$, define the characteristics $\phi^{(1)}(t) = \mathbf{1}_{\left\{Y(t) = k\ell, W \in B\right\}}$ and $\phi^{(2)}(t) = \mathbf{1}_{\{t \geq 0\}}$, where $W$ denotes the weight of the process $Y$. Note that, $Z^{\phi^{(1)}}(t)$ is the number of individuals with $k\ell$ offspring and weight belonging to $B$ up to time $t$, while $Z^{\phi^{(2)}}(t) = |\mathbb{T}_{t}|$. 
Thus, 
\[\lim_{t \to \infty} \frac{Z^{\phi^{(1)}}(t)}{Z^{\phi^{(2)}}(t)} = \lim_{t \to \infty} \frac{N_{k}(t, B)}{\ell t}.\] 
Note that both $\phi^{(1)}(t)$ and $\phi^{(2)}(t)$ are c\`adl\`ag and bounded and moreover, Condition~\hyperlink{c1}{\textbf{C1}} implies that \eqref{cond1} is satisfied. Moreover, the assumption that $f(0,W) > 0$ almost surely implies that $\left|\mathbb{T}_{t}\right| \rightarrow \infty$ almost surely. Thus, by applying Theorem~\ref{th:Nerman}, 
\begin{equation} \label{crump1}
\lim_{t \to \infty} \frac{Z^{\phi^{(1)}}(t)}{Z^{\phi^{(2)}}(t)} = \alpha \int_0^\infty \mathrm e^{-\alpha s}\E{\mathbf{1}_{\left\{Y(s) = k\ell, W \in B\right\}}} \dd s = \E{\E[W]{ \left(e^{-\alpha \tau_{k}} - e^{-\alpha\tau_{k+1}}\right)}\mathbf{1}_{B}(W)}
\end{equation} 
where the last equality follows from Fubini's theorem and we recall that $\tau_{k}$ is the time of the $k$th event in the process $Y_{W}(t)$. Now, since, when $W = w$, $\tau_k$ is distributed as a sum of independent exponentially distributed random variables with rates $f(0,w), f(1,w) \ldots$, we have 
\begin{equation} \label{crump2}
\E{\E[W]{e^{-\alpha \tau_{k}}}\mathbf{1}_{B}(W)} =  \E{\left(\prod_{i=0}^{k-1} \frac{f(i,W)}{f(i,W) + \alpha} \right)\mathbf{1}_{B}(W)}.
\end{equation}
The result follows from combining \eqref{crump1} and \eqref{crump2}.
\end{proof}

\begin{rmq}
As noted by the authors of \cite{rudas}, Theorem~\ref{th:Nerman} can be applied to deduce a number of other properties of the tree, in particular the analogue of \cite[Theorem~1]{rudas} applies in this case as well. 
\end{rmq}

\subsection{A Strong Law for the Partition Function} \label{subsec:slln-parti}
 We can also apply Theorem~\ref{th:Nerman} to show that the Malthusian parameter $\alpha$ emerges as the almost sure limit of the partition function, under certain conditions on the fitness function $f$.

\begin{theorem} \label{th:strong-law-parti}
Let $(\mathcal{T}_{t})_{t \geq 0}$ be a $\rif{\mu, f, \ell}$ satisfying \hyperlink{c1}{\textbf{C1}} with Malthusian parameter $\alpha$. Moreover, assume that there exists a constant $C < \alpha$ and a non-negative function $\varphi$ with $\E{\varphi(W)} < \infty$ such that, for all $k \in \N_{0}$, $f(k,W) \leq C k + \varphi(W)$ almost surely. Then, almost surely
\[
    \frac{\parti_{t}}{t} \xrightarrow{t \to \infty} \alpha.
\]
\end{theorem}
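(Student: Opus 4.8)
The plan is to express the partition function $\parti_t$ as a general branching process counted with a suitable characteristic, and then apply Nerman's theorem (Theorem~\ref{th:Nerman}) to extract its almost sure growth rate. Recall that $\parti_t = \sum_{j=0}^{t} f(\outdeg{j,\mathcal{T}_t}/\ell, W_j)$, so in the continuous-time embedding, at the birth time $\sigma'_n$ we have $\parti_{n}$ equal to the sum, over all individuals $x$ alive in $\mathbb{T}_{\sigma'_n}$, of $f(Y_x(\sigma'_n - \sigma_x), W_x)$, where $Y_x$ is the pure-birth process governing $x$'s offspring count. This is exactly $Z^{\psi}(\sigma'_n)$ for the characteristic $\psi(t) := f(Y(t)/\ell, W)$. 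Comparing this with the counting characteristic $\phi^{(2)}(t) = \mathbf{1}_{t \geq 0}$ (for which $Z^{\phi^{(2)}}(\sigma'_n) = |\mathbb{T}_{\sigma'_n}| = \ell n + 1$, and $\parti_t / t$ has the same limit as $Z^{\psi}/Z^{\phi^{(2)}}$ up to the constant $\ell$), Theorem~\ref{th:Nerman} will give
\[
\lim_{t \to \infty} \frac{\parti_t}{t} = \ell \cdot \frac{\int_0^\infty e^{-\alpha s} \E{f(Y(s)/\ell, W)}\, \dd s}{\int_0^\infty e^{-\alpha s}\, \dd s} = \ell \alpha \int_0^\infty e^{-\alpha s}\, \E{f(Y(s)/\ell, W)}\, \dd s.
\]

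The second step is to evaluate this integral and check it equals $\alpha$. Conditioning on $W = w$ and using that $Y(s)/\ell = k$ exactly on the interval $[\tau_k, \tau_{k+1})$, one has $\int_0^\infty e^{-\alpha s} f(Y(s)/\ell, w)\, \dd s = \sum_{k=0}^\infty f(k,w) \int_{\tau_k}^{\tau_{k+1}} e^{-\alpha s}\, \dd s$ inside an expectation over the jump times. Taking expectations over the exponential holding times and telescoping should produce $\sum_{k=0}^\infty f(k,w) \cdot \frac{1}{\alpha}\big(\E[w]{e^{-\alpha\tau_k}} - \E[w]{e^{-\alpha \tau_{k+1}}}\big)$; since $\E[w]{e^{-\alpha\tau_k}} = \prod_{i=0}^{k-1}\frac{f(i,w)}{f(i,w)+\alpha}$, the factor $f(k,w)(\E[w]{e^{-\alpha\tau_k}} - \E[w]{e^{-\alpha\tau_{k+1}}}) = f(k,w)\,\E[w]{e^{-\alpha\tau_k}}\cdot\frac{\alpha}{f(k,w)+\alpha}$. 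Multiplying by $\ell\alpha$ and integrating out $\alpha$'s, the sum becomes $\ell\,\E{\sum_{k=0}^\infty \frac{f(k,W)}{f(k,W)+\alpha}\prod_{i=0}^{k-1}\frac{f(i,W)}{f(i,W)+\alpha}}$. I would then rewrite $\frac{f(k,W)}{f(k,W)+\alpha} = 1 - \frac{\alpha}{f(k,W)+\alpha}$ and telescope once more against $\ell\cdot\E{\hat\rho_W(\alpha)}=\ell\cdot m(\alpha)/\ell = m(\alpha) = 1$ (via \eqref{eq:malth-def}), which should collapse the whole expression to exactly $\alpha$.

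The main obstacle, and the reason the hypotheses of the theorem are needed, is verifying the \emph{moment condition} of Theorem~\ref{th:Nerman} for the new characteristic $\psi$: we must show $\E{\sup_{t \geq 0} e^{-\lambda t} \psi(t)} < \infty$ for some $\lambda < \alpha$. Unlike the bounded indicator characteristics used in Theorem~\ref{th:degree-dist}, here $\psi(t) = f(Y(t)/\ell, W)$ is genuinely unbounded and grows with the offspring count, so this is the delicate point. This is precisely where the assumption $f(k,W) \leq Ck + \varphi(W)$ with $C < \alpha$ and $\E{\varphi(W)} < \infty$ enters: it lets me bound $\sup_{t\geq 0} e^{-\lambda t} f(Y(t)/\ell, W) \leq \sup_{t \geq 0} e^{-\lambda t}\big(C\,Y(t)/\ell + \varphi(W)\big)$, and since $Y(t)/\ell$ is dominated in distribution by a Yule-type (linear birth) process with rate controlled by $C$, its exponential moments grow at rate at most $C < \alpha$. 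Choosing $\lambda$ strictly between $C$ and $\alpha$, the factor $e^{-\lambda t}$ beats the growth of $e^{Ct}$, so the supremum is almost surely finite with finite expectation — I would make this rigorous by a coupling of $Y$ with a pure linear birth process and an explicit estimate on $\E{\sup_t e^{-\lambda t} Y(t)}$, controlling the contribution of $\varphi(W)$ separately using $\E{\varphi(W)} < \infty$. I would also record at the outset that \eqref{cond1} holds for some $\lambda < \alpha$, which is exactly Condition~\hyperlink{c1}{\textbf{C1}} as already noted in the text.
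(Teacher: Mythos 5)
Your proposal is correct and follows essentially the same route as the paper: the characteristic $\psi(t)=f(Y(t)/\ell,W)$ is exactly the paper's $\phi^{(1)}(t)=\sum_{k}f(k,W)\mathbf{1}_{Y(t)=k\ell}$, the limit is evaluated by the same telescoping against the Malthusian equation \eqref{eq:malth-def}, and the moment condition is verified, as in the paper, by coupling $Y$ with a linear pure-birth process of rate $Ck+\varphi(W)$ (Lemma~\ref{lem:pointless-coupling}) and controlling $\E{\sup_t e^{-Ct}\mathcal{Y}(t)}$ via a martingale plus Doob's maximal inequality (Lemma~\ref{lem:martingale-lem}). The only cosmetic difference is that the paper takes $\lambda=C$ rather than $\lambda\in(C,\alpha)$ in Nerman's hypothesis, which changes nothing of substance.
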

In order to apply Theorem~\ref{th:Nerman}, we need to bound $\E{\sup_{t \geq 0} e^{-\lambda t} \phi^{(1)}(t)}$ for an appropriate choice of \bluee{$\lambda < \alpha$ and} characteristic $\phi^{(1)}$ that tracks the evolution of the partition function associated with the process. In order to do so, using the assumptions on $f(i,W)$, we will couple the process $Y$ defined in \eqref{def:yw} with an appropriate pure birth process $(\mathcal{Y}(t))_{t\geq 0}$ (Lemma~\ref{lem:pointless-coupling}) and apply Doob's maximal inequality to a martingale associated with $(\mathcal{Y}(t))_{t \geq 0}$ (Lemma~\ref{lem:martingale-lem}). \bluee{As we will see, our choice of $\lambda$ will be given by $C$, and this is the reason for the assumption that $C < \alpha$ in Theorem~\ref{th:strong-law-parti}.}

In order to define $\mathcal{Y}(t)$, first sample a weight $W$ and set $\mathcal{Y}(0) = 0$. 
 Then, if $\mathbb{P}_{w}$ denotes the probability measure associated with the process when the weight is $w$, define the rates such that
\begin{equation} \label{eq:yule-def}
\Prob[w]{\mathcal{Y}(t + h) = k + 1 \; | \; \mathcal{Y}(t) = k} = (C k + \varphi(w)) h + o(h). 
\end{equation}
We also let $\mathcal{Y}_{w}$ denote the process with the same transition rates, but deterministic weight $w$.

It will be beneficial to state a more general result, about pure birth processes $\left(\mathcal{X}(t)\right)_{t\geq 0}$ with linear rates, from the paper by Holmgren and Janson \cite{holmgren-janson}. For brevity, we adapt the notation and only include some specific statements from both theorems. 

\begin{lemma}[{\cite[Theorem~A.6 \& Theorem~A.7]{holmgren-janson}}] \label{lemma-appendix-janson}
Let $\left(\mathcal{X}(t)\right)_{t\geq 0}$ be a pure birth process with $\mathcal{X}(0) = x_0$ and rates such that 
\[\Prob{\mathcal{X}(t + h) = k + 1 \; | \; \mathcal{X}(t) = k} = (c_1 k + c_2) h + o(h),\]
for some constants $c_1, c_2 > 0$. Then, for each $t \geq 0$
\begin{equation} \label{eq:expected-growth}
\E{\mathcal{X}(t)} = \left(x_0 + \frac{c_2}{c_1}\right)e^{c_1 t} - \frac{c_2}{c_1}. 
\end{equation}
Moreover, if $x_0 = 0$ the probability generating function is given by
\begin{equation} \label{eq:pgf}
\E{z^{\mathcal{X}(t)}} = \left(\frac{e^{-c_1t}}{1-z\left(1 - e^{-c_1t}\right)}\right)^{c_2/c_1}.
\end{equation}
\end{lemma}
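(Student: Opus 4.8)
The plan is to work directly with the forward Kolmogorov equations of the pure birth process and extract first the mean and then the full probability generating function. Writing $p_k(t) := \Prob{\mathcal{X}(t) = k}$, the rates in the statement give
\begin{align*}
p_k'(t) = \bigl(c_1(k-1) + c_2\bigr)p_{k-1}(t) - (c_1 k + c_2)p_k(t),
\end{align*}
with the convention $p_{x_0 - 1} \equiv 0$. Before manipulating these I would record that the process is non-explosive: since $\sum_k \tfrac{1}{c_1 k + c_2} = \infty$, the minimal process is honest, so $\sum_k p_k(t) = 1$ for all $t$ and the term-by-term differentiations below are justified by monotone/dominated convergence.

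For the mean, set $m(t) := \E{\mathcal{X}(t)} = \sum_k k\, p_k(t)$. Multiplying the forward equations by $k$, summing, and shifting the index in the gain term collapses the quadratic $\E{\mathcal{X}^2}$ contributions and leaves the linear ODE
\begin{align*}
m'(t) = c_1 m(t) + c_2, \qquad m(0) = x_0,
\end{align*}
whose solution via the integrating factor $e^{-c_1 t}$ is $m(t) = \bigl(x_0 + \tfrac{c_2}{c_1}\bigr)e^{c_1 t} - \tfrac{c_2}{c_1}$, which is \eqref{eq:expected-growth}. For the generating function I would introduce $G(z,t) := \E{z^{\mathcal{X}(t)}} = \sum_k z^k p_k(t)$ and translate the forward equations, using $\sum_k k z^k p_k = z\,\partial_z G$, into the first-order linear PDE
\begin{align*}
\partial_t G = c_1 z(z-1)\,\partial_z G + c_2 (z-1) G, \qquad G(z,0) = z^{x_0}.
\end{align*}
With $x_0 = 0$ the initial datum is $G(z,0)\equiv 1$. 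I would then solve by the method of characteristics: the characteristic curves satisfy the logistic ODE $\dot z = -c_1 z(z-1)$, along which $G$ evolves linearly as $\dot G = c_2(z-1)G$; integrating both and re-expressing in $(z,t)$ produces the claimed
\begin{align*}
G(z,t) = \left(\frac{e^{-c_1 t}}{1 - z\bigl(1 - e^{-c_1 t}\bigr)}\right)^{c_2/c_1},
\end{align*}
i.e.\ \eqref{eq:pgf}. An entirely routine alternative is simply to verify that this candidate satisfies both the PDE and $G(z,0) = 1$, bypassing the characteristic integration.

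The only genuinely fiddly step is the PDE solution from scratch: the characteristic integration requires the partial fraction $\int \tfrac{\dd z}{z(z-1)}$ and then unwinding the implicit relation between the initial and current position on a characteristic, where one must be careful with signs and the direction of time. For a write-up the verification route is therefore preferable. As a sanity check one can note the probabilistic meaning of the answer: when $b := c_2/c_1$ is a positive integer, $\mathcal{X}(t) + b$ is a Yule process started from $b$ individuals each splitting at rate $c_1$, hence a sum of $b$ independent Yule-from-one (shifted geometric) variables, whose product generating function gives exactly the stated negative-binomial form; the general non-integer $b$ then follows since both sides are analytic in $b$.
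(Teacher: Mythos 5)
The paper does not actually prove this lemma: it is imported verbatim (with adapted notation) from Holmgren and Janson, cited as Theorems A.6 and A.7 of \cite{holmgren-janson}, so there is no internal proof to compare against. Your blind derivation is a correct, self-contained proof of the imported result. The forward Kolmogorov equations are set up correctly, the non-explosivity remark ($\sum_k (c_1k+c_2)^{-1}=\infty$) is the right justification for honesty, the ODE $m'=c_1m+c_2$ and its solution give \eqref{eq:expected-growth}, and the PDE $\partial_t G = c_1 z(z-1)\partial_z G + c_2(z-1)G$ with $G(z,0)=1$ is indeed solved by the stated negative-binomial form \eqref{eq:pgf} (I checked the verification route directly; it closes). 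Your integer-$b$ Yule-process sanity check is also correct: with $b=c_2/c_1$ the process $\mathcal{X}(t)+b$ has per-state rate $c_1(k+b)$, i.e.\ is a rate-$c_1$ Yule process from $b$ ancestors.

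One point worth tightening in a write-up: in the mean computation, the cancellation of the $c_1k^2$ terms requires rearranging sums whose individual pieces involve $\sum_k k^2 p_k(t)$, so you implicitly need $\E{\mathcal{X}(t)^2}<\infty$ (or a truncation argument) before the "collapse" is legitimate; "monotone/dominated convergence" as stated covers the term-by-term differentiation but not this rearrangement. The cleanest fix is either to note that $\mathcal{X}(t)$ is stochastically dominated by a Yule process with geometric-type tails (hence all moments finite), or simply to derive the PGF first on $|z|<1$ — where all series converge absolutely — and obtain the mean by letting $z\uparrow 1$ in $\partial_z G$. This is a minor technical gap, not a flaw in the approach.
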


Finally, we will require Lemma~\ref{lem:martingale-lem} and Lemma~\ref{lem:pointless-coupling}. 
\begin{lemma} \label{lem:martingale-lem}
For any $w > 0$, the process $(e^{-C t}\left(\mathcal{Y}_{w}(t) + \varphi(w)/C\right))_{t \geq 0}$ is a martingale with respect to its natural filtration $(\mathcal{F}_t)_{t\geq 0}$.  Moreover,
\[\E{\sup_{t \geq 0} \left(e^{-C t}\mathcal{Y}(t) \right)} < \infty.\]
\end{lemma}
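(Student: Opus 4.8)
The plan is to treat the two assertions separately, handling the martingale claim by a direct Markov-property computation and the supremum bound by Doob's inequality combined with an \emph{exact} second-moment calculation. For the martingale property I would fix $w > 0$ and regard $\mathcal{Y}_w$ as the linear pure birth process corresponding, in the notation of Lemma~\ref{lemma-appendix-janson}, to $c_1 = C$, $c_2 = \varphi(w)$ and initial value $x_0 = 0$. By the Markov property, conditionally on $\mathcal{F}_t$ the shifted process $(\mathcal{Y}_w(t+s))_{s \geq 0}$ is again a linear birth process with the same rates but started from $\mathcal{Y}_w(t)$; applying the expectation formula \eqref{eq:expected-growth} with initial value $x_0 = \mathcal{Y}_w(t)$ then gives
\[
\E{\mathcal{Y}_w(t+s) + \varphi(w)/C \,\middle|\, \mathcal{F}_t} = \bigl(\mathcal{Y}_w(t) + \varphi(w)/C\bigr)\, e^{Cs}.
\]
Multiplying through by $e^{-C(t+s)}$ produces exactly the martingale identity, and integrability is immediate from \eqref{eq:expected-growth}; this settles the first claim.

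For the supremum bound, write $M_w(t) := e^{-Ct}(\mathcal{Y}_w(t) + \varphi(w)/C)$ and $r := \varphi(w)/C$. Since $M_w$ is a non-negative martingale, hence a sub-martingale, Lemma~\ref{lem:doob-lp-maximal} with $p = 2$ gives $\E{\sup_{0 \leq t \leq T} M_w(t)^2} \leq 4\,\E{M_w(T)^2}$ for every $T$, so everything reduces to a uniform-in-$T$ bound on $\E{M_w(T)^2}$. Here I would use the probability generating function \eqref{eq:pgf}, which identifies $\mathcal{Y}_w(t)$ as a (generalised) negative binomial variable with parameters $r$ and success probability $e^{-Ct}$. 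Feeding the standard mean and variance of this law into $\E{(\mathcal{Y}_w(t)+r)^2}$ yields, after the $r^2$-terms collapse to $r^2 e^{2Ct}$ and the prefactor $e^{-2Ct}$ cancels the exponential growth, the exact identity
\[
\E{M_w(t)^2} = r\bigl(1 - e^{-Ct}\bigr) + r^2 \leq r(1+r).
\]
Letting $T \to \infty$ with monotone convergence, followed by Cauchy--Schwarz and the pointwise bound $e^{-Ct}\mathcal{Y}_w(t) \leq M_w(t)$, yields $\E[w]{\sup_{t \geq 0} e^{-Ct}\mathcal{Y}_w(t)} \leq 2\sqrt{r(1+r)}$.

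To conclude I would integrate this $w$-dependent estimate against $\mu$. Since $\mathcal{Y} = \mathcal{Y}_W$ with $W \sim \mu$, Fubini gives
\[
\E{\sup_{t \geq 0} e^{-Ct}\mathcal{Y}(t)} \leq \frac{2}{C}\,\E{\sqrt{\varphi(W)\bigl(C + \varphi(W)\bigr)}} \leq \frac{2}{C}\Bigl(\sqrt{C}\,\E{\sqrt{\varphi(W)}} + \E{\varphi(W)}\Bigr),
\]
where the last step uses $\sqrt{a+b}\leq\sqrt a + \sqrt b$; both resulting terms are finite since $\E{\varphi(W)} < \infty$ by hypothesis and $\E{\sqrt{\varphi(W)}} \leq \sqrt{\E{\varphi(W)}}$ by Cauchy--Schwarz.

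The hard part is the uniform-in-$T$ control of the second moment: a crude bound on $\E{M_w(T)^2}$ would diverge, so the argument genuinely relies on the exact cancellation coming from the negative binomial structure in \eqref{eq:pgf}, and one must compute this moment precisely rather than merely estimate it. A secondary subtlety is passing from the deterministic-weight bound to the random-weight statement, where one must ensure the $w$-dependent estimate $2\sqrt{r(1+r)}$ is $\mu$-integrable; this is exactly where the moment hypothesis $\E{\varphi(W)} < \infty$ (together with Cauchy--Schwarz for the square-root term) is used.
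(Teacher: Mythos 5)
Your proposal is correct and follows essentially the same route as the paper: the martingale property via the Markov property and \eqref{eq:expected-growth}, a uniform-in-$t$ second-moment bound extracted from the probability generating function \eqref{eq:pgf}, Doob's $L^2$ maximal inequality with monotone convergence and Cauchy--Schwarz, and finally integration of the resulting $w$-dependent bound against $\mu$ using $\E{\varphi(W)} < \infty$. The only (harmless) differences are that you record the exact identity $\E{M_w(t)^2} = r(1-e^{-Ct}) + r^2$ where the paper settles for a slightly looser explicit bound, and that you control the $\sqrt{\varphi(W)}$ term by Cauchy--Schwarz where the paper absorbs it into a bound of the form $A + B\varphi(w)$.
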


\begin{proof}
The process $(\mathcal{Y}_{w}(t))_{t \geq 0}$  is a pure birth process satisfying the assumptions of Lemma~\ref{lemma-appendix-janson}, with $c_1 = C$ and $c_2 = \varphi(w)$. Therefore, by \eqref{eq:expected-growth} and the Markov property, for any $t > s >0$ we have 
\[
\E{\mathcal{Y}_{w}(t) \; | \; \mathcal{F}_{s}} = \E{\mathcal{Y}_{w}(t) \; | \; \mathcal{Y}_{w}(s)} =
\left(\mathcal{Y}_{w}(s) + \frac{\varphi(w)}{C} \right)e^{C(t-s)} - \frac{\varphi(w)}{C},
\]
which implies the martingale statement.

Moreover, applying \eqref{eq:pgf} for the probability generating function, differentiating twice and evaluating at $z = 1$, we obtain
\[\E{\mathcal{Y}_{w}(t)\left(\mathcal{Y}_{w}(t) -1 \right)}
= \frac{\varphi(w)\left(C + \varphi(w)\right)}{C^2} \left(e^{C t} -1\right)^2,
\]
and thus 
after some manipulations, we find that for all $t \geq 0$
\[\E{e^{-2Ct}\left(\mathcal{Y}_{w}(t) + \varphi(w)/C\right)^2} \leq \redd{\frac{\varphi(w)^2}{C^2} + \frac{\varphi(w)}{C}\left(1 - e^{-Ct}\right)}.\]
Combining this $L^2$ quadratic bound with Doob's maximal inequality, we have
\begin{linenomath*}
\begin{align*}
\E{\sup_{t \geq 0} \left(e^{-C t}\mathcal{Y}_w(t) \right)} & \leq 
\E{\sup_{t \geq 0} \left(e^{-C t}\left(\mathcal{Y}_w(t) + \varphi(w)/C\right)\right)}
\\ & \leq A + B\varphi(w),
\end{align*} 
\end{linenomath*}
for constants $A, B$ depending only on $C$. 
Thus,
\[
    \E{\sup_{t \geq 0} \left(e^{-C t}\mathcal{Y}(t) \right)} = \E{\sup_{t \geq 0} \left(e^{-C t}\mathcal{Y}_{W}(t) \right)} \leq A + B\E{\varphi(W)} < \infty.
\]
\end{proof}

\begin{lemma} \label{lem:pointless-coupling}
Recall the definition of $Y$ in \eqref{def:yw} and assume that there exists a constant $C < \alpha$ and a non-negative function $\varphi$ with $\E{\varphi(W)} < \infty$ such that, for all $k \in \N_{0}$, $f(k,W) \leq C k + \varphi(W)$ almost surely. Then, there exists a coupling $(\hat{Y}(t),\hat{\mathcal{Y}}(t))_{t \geq 0}$ of $(Y(t))_{t \geq 0}$ and $(\mathcal{Y}(t))_{t \geq 0}$ such that, for all $t \geq 0$
\[\hat{Y}(t) \leq \ell \cdot \hat{\mathcal{Y}}(t). \]
\end{lemma}
In the following proof, we denote by $\exprv{r}$ the exponential distribution with parameter $r$.
\begin{proof}
First, we sample $\hat{W}$ from $\mu$ and use this as a common weight for $\hat{Y}$ and $\hat{\mathcal{Y}}$.
Now, let $\left(\varsigma_{i}\right)_{i \geq 0}$ be independent $\exprv{f(i, \hat{W})}$ distributed random variables. Then, for all $k > 0$ set  $\hat{\tau}_{k} = \sum_{i=0}^{k-1} \varsigma_{i}$ and \[\hat{Y}(t) = \sum_{k=1}^{\infty} k \ell \mathbf{1}_{\left\{\hat{\tau}_{k} \leq t < \hat{\tau}_{k+1}\right\}}.\] 

The $\varsigma_{i}$ can be interpreted as the intermittent time between jumps from state $i$ to $i+\ell$. 
 For all $t> 0$ construct the jump times of $(\hat{\mathcal{Y}}(t))_{t \geq 0}$ iteratively as follows: 
\begin{itemize}
    \item Note that by assumption $f(0, \hat{W}) \leq \varphi(\hat{W})$. Let $e_{0} \sim \exprv{\varphi(\hat{W}) - f(0,\hat{W})}$ and set $\varsigma'_{0} = \min\left\{e_{0}, \varsigma_{0}\right\}$. We may interpret $\varsigma'_{0}$ as the time for $\hat{\mathcal{Y}}$ to jump from $0$ to $1$.  
    \item Given $\varsigma'_{0}, \ldots, \varsigma'_{j}$, let
    $q_{j} := \sum_{i=0}^{j}\varsigma'_{i}$ and define
    $m_{j} := \hat{Y}(q_{j})/\ell$, i.e., the value of $\hat{Y}/\ell$ once $\hat{\mathcal{Y}}$ has reached $j+1$. Assume inductively that $m_{j} \leq j+1$ and set \[e_{j + 1} \sim \exprv{C(j + 1) +\varphi(\hat{W}) - f(m_{j}, \hat{W})} \quad \text{ and } \quad \varsigma'_{j+1} = \min\left\{e_{j}, \varsigma_{m_j} \right\}.\]
\end{itemize}
Observe that, since $\varsigma_{j+1}' \leq \varsigma_{m_j + 1}$, we have $m_{j+1} \leq j+2$, so we may iterate this procedure. 

It is clear that $(\hat{Y}(t))_{t \geq 0}$ is distributed like $(Y(t))_{t \geq 0}$ and using the properties of the exponential distribution, one readily confirms  that $(\hat{\mathcal{Y}}(t))_{t \geq 0}$ is distributed like $(\mathcal{Y}(t))_{t\geq 0}$. Finally, the desired inequality follows from the fact that $\hat{\mathcal{Y}}(t)$ always jumps before or at the same time as $\hat{Y}(t)$. 
\end{proof}

\begin{proof}[Proof of Theorem~\ref{th:strong-law-parti}]
Consider the continuous time embedding of the $\rif{\mu, f, \ell}$ and define the characteristics $\phi^{(1)}(t) : = \sum_{k=0}^{\infty} f(k,W) \mathbf{1}_{\{Y(t) = k\ell\}}$ and $\phi^{(2)}(t) : = \mathbf{1}_{\{t \geq 0\}}$. Recall that we denote by $(\tau_i)_{i \geq 1}$ the times of the jumps in $Y$ and that, for all $k \geq 0$, $f(k,W) \leq C k + \varphi(W)$ . Then, by Lemma~\ref{lem:pointless-coupling}, Lemma~\ref{lem:martingale-lem} and the assumptions of the theorem,  
\[
\E{\sup_{t \geq 0} \left(e^{-C t} \phi^{(1)}(t)\right)}  \stackrel{\text{Lem.}~\ref{lem:pointless-coupling}}{\leq}  \E{\sup_{t \geq 0} \left(e^{-C t} \left(C \mathcal{Y}_{W}(t) + \varphi(W)\right) \right)} \stackrel{\text{Lem.}~\ref{lem:martingale-lem}}{<} \infty 
 \]
Now, in this case $Z^{\phi^{(1)}}(t)$ is the total sum of fitnesses of individuals born up to time $t$, while $Z^{\phi^{(2)}}(t) = |\mathcal{T}_{t}|$. Thus, by Theorem~\ref{th:Nerman} and Fubini's theorem in the second equality, almost surely we have
\begin{linenomath*}
\begin{align} \label{eq:thm4-equation}
\lim_{n \to \infty} \frac{\mathcal{Z}_{n}}{\ell n} & = \alpha \int_{0}^{\infty} e^{- \alpha s} \E{\sum_{k=0}^{\infty} f(k,W) \mathbf{1}_{\{Y(s) = k\ell\}}} \dd s =\E{\sum_{k=0}^{\infty} f(k,W) \left(e^{-\alpha \tau_{k}} - e^{-\alpha \tau_{k+1}}\right)} \\ \nonumber & \hspace{3cm} =  \E{\sum_{k=1}^{\infty} \frac{\alpha f(k,W)}{f(k,W) + \alpha}\prod_{i=0}^{k-1} \frac{f(i,W)}{f(i,W) + \alpha}}.  
\end{align}
\end{linenomath*}
Now, recall that by \eqref{eq:malth-def} we have 
\[\E{\sum_{k=1}^{\infty} \frac{f(k,W)}{f(k,W) + \alpha}\prod_{i=0}^{k-1} \frac{f(i,W)}{f(i,W) + \alpha}} = \frac{1}{\ell},\]
and combining this with \eqref{eq:thm4-equation} proves the result.
\end{proof}

\subsection{Examples of Applications of Theorem~\ref{th:degree-dist}} \label{subsec:examples}
\subsubsection{Weighted Cayley Trees} \label{wct}
Consider the model where $f(k,W) = 0$ for $k \geq 1$ and $f(0,W) = g(W)$. Thus, at each step, a vertex with degree $0$ is chosen and produces $\ell$ children and thus this model produces an $(\ell + 1)$-\emph{Cayley} tree, i.e., a tree in which each node that is not a leaf has degree $\ell + 1$. Without loss of generality, by considering the pushforward of $\mu$ under $g$ if necessary, we may assume that $g(W) = W$. In this case, $\hat{\rho}_{\mu}(\lambda) = \ell \cdot \E{\frac{W}{W + \lambda}}$ and thus \hyperlink{c1}{\textbf{C1}} is satisfied as long as $\ell \geq 2$. Thus, $p^{\alpha}_{k}(B) = 0$ for all $k \geq 2$ and
\[
p_0(B) = \E{\frac{\alpha}{W + \alpha}\mathbf{1}_{B}(W)}, \quad p_1(B) = \E{\frac{W}{W + \alpha} \mathbf{1}_{B}(W)}.
\]
This rigorously confirms a result of Bianconi \cite{bianconi_cayley}. Note however, that in \cite{bianconi_cayley}, $\alpha$ is described as the almost sure limit of the partition function and we may only apply Theorem~\ref{th:strong-law-parti} under the assumption that $\E{W} < \infty$. 

In the notation of \cite{bianconi_cayley}, the weights $W$ are called `energies', using the symbol $\epsilon$, the function $g(\epsilon) := e^{\beta \epsilon}$, where $\beta > 0$ is a parameter of the model,  and $\alpha := e^{\beta \mu_{F}}$ is described as the limit of the partition function. Thus, the proportion of vertices with out-degree $0$ with `energy' belonging to some measurable set $B$ is
\[\E{\frac{1}{e^{\beta (\epsilon - \mu_{F})} + 1}\mathbf{1}_{B}(W)}, \]
which is known as a \emph{Fermi-Dirac} distribution in physics. 

\subsubsection{Weighted Random Recursive Trees} \label{ex:wrrt}
 In the case that $f(k,W) = W$, we obtain a model of \textit{weighted random recursive trees} with independent weights and 
    \hyperlink{c1}{\textbf{C1}} is satisfied with $\alpha = \E{W}$ provided $\E{W} < \infty$.
    Theorem~\ref{th:degree-dist} then implies that 
    \[
\frac{N_{k}(t, B)}{\ell t} \xrightarrow{t \to \infty} \E{\frac{\ell \E{W} W^k}{\left(W + \ell\E{W}\right)^{k+1}} \mathbf{1}_{B}(W)},
   \]
   almost surely.
This was observed in the case $\ell = 1$ by the authors of \cite[Proposition~3]{dynamical-simplices}. Note also that in this case Theorem~\ref{th:strong-law-parti} coincides with the usual strong law of large numbers.

The weighted random recursive tree has a natural generalisation to affine fitness functions. This is the topic of the next section. 

\section{Generalised Preferential Attachment Trees with Fitnesses}
\label{sec:gpaf}
In this section, we study $\rifs{\mu, f, \ell}$ in the specific case when the function $f$ takes an affine form, that is, $f(i,W) = ig(W) + h(W)$, for positive, measurable functions $g,h$. We call this particular case of the model a \emph{generalised preferential attachment tree with fitness} (which we abbreviate as a $\gpaf$-tree). The affine form of this model \greenn{means} that it is tractable to apply the coupling methods outlined in Section~\ref{subsub:gpaf-cond-coupling}, when Condition~\hyperlink{c1}{\textbf{C1}} fails. Moreover, this model is general enough to be an extension of not only the weighted random recursive tree, but also of the additive and multiplicative models studied in \cite{borgs-chayes, bhamidi}. 

Below, in Section~\ref{subsec:applying-c1-to-gpaf} we apply the theory of the previous section to this model when \hyperlink{c1}{\textbf{C1}} is satisfied. In the rest of Section~\ref{sec:gpaf}. In Section~\ref{subsec-cond}, we analyse the model when Condition~\hyperlink{c1}{\textbf{C1}} fails by having $m(\lambda, \mathbb{R}_{+}) \leq 1$ for all $\lambda > 0$ such that $m(\lambda, \mathbb{R}_{+}) < \infty$, stating and proving  Theorem~\ref{thm:cond-edge-dist}. Then, in Section~\ref{subsec:deg-degs-gpaf} we analyse the model when Condition~\hyperlink{c1}{\textbf{C1}} fails by having $m(\lambda, \mathbb{R}_{+}) = \infty$ for all $\lambda > 0$, stating and proving Theorem~\ref{th:conn-transitions}.

Note that in this section, we formulate our results in terms of functions $g$ and $h$ \bluee{of} a random variable $W$ taking values in $\mathbb{R}_{+}$. However, in the vein of Remark~\ref{rem:robustness}, we expect these results to extend to cases where $g$ and $h$ may depend on more general random variables. For example, there is no loss of generality in assuming $g$ and $h$ depend on possibly correlated random variables $W_1$ and $W_2$ assigned to a given vertex. In this case, the coupling technique applied in Section~\ref{subsub:gpaf-cond-coupling} needs to be adjusted accordingly, with appropriate ``truncations'' of the vector $(W_1,W_2)$.

\subsection{When the \texorpdfstring{$\gpaf$}{}-tree satisfies Condition~\texorpdfstring{\protect\hyperlink{c1}{\textbf{C1}}}{}} \label{subsec:applying-c1-to-gpaf}
In the context of the $\gpaf$-tree, Condition~\hyperlink{c1}{\textbf{C1}} states that there exists $\lambda > 0$ such that
\[ 
m(\lambda, \mathbb{R}_{+}) = \ell \cdot \E{\sum_{n=1}^{\infty} \prod_{i=0}^{n-1} \frac{g(W)i +h(W)}{g(W)i + h(W) + \lambda}} > 1. 
\]
First recall the definition of the birth process $Y$ from \eqref{def:yw} in Section~\ref{sec:cont-time}, with $f(k,\bluee{W}) = g(W)k + h(W)$. 
By applying \eqref{eq:expected-growth} from Lemma~\ref{lemma-appendix-janson} and the initial condition $Y(0) = 0$, for any $w \in \mathbb{R}_{+}$ we have
\[
\E[w]{Y(t)} = \left(\frac{h(w)}{g(w)}\right) e^{\ell g(w)t} - \frac{h(w)}{g(w)}.
\]
Now, \eqref{eq:laplace2} and \eqref{eq:laplace11} in Section~\ref{sec:cont-time} showed that  
\begin{equation} \label{gen-geom-sum}
\ell \cdot \sum_{n=1}^{\infty} \prod_{i=0}^{n-1} \frac{g(W)i +h(W)}{g(W)i + h(W) + \lambda} = \int_{0}^{\infty} \lambda e^{-\lambda s} \E[w]{Y(s)} \dd s = 
\begin{cases}
\frac{h(w)}{\lambda/\ell - g(w)} &  \text{if } \lambda/ \ell > g(w); \\
\infty & \text{otherwise}.
\end{cases}
\end{equation}
For a measurable function $g:\mathbb{R}_{+} \rightarrow \mathbb{R}_{+}$ we define $\esssup{(g)}$ such that 
\begin{equation} \label{eq:esssup-def}
    \esssup{(g)} := \inf\left\{a \in \mathbb{R}_{+}: \mu\left(\left\{x : \, g(x) > a\right\}\right) = 0\right\}.
\end{equation}
Therefore by \eqref{gen-geom-sum}, for $\lambda \geq \ell \cdot \esssup{(g)}$ we have $m(\lambda, \mathbb{R}_{+}) = \E{\frac{h(W)}{\lambda/\ell - g(W)}}$, while if $\lambda < \ell \cdot \esssup{(g)}$ we have $m(\lambda, \mathbb{R}_{+}) = \infty$. Thus, Condition~\hyperlink{c1}{\textbf{C1}} is satisfied if $\esssup{(g)} < \infty$, $\E{h(W)} < \infty$ and, for some $\lambda \geq \ell \cdot \esssup{(g)}$
\begin{equation} \label{eq:laplace-pref-fitness}
1 < \E{\frac{h(W)}{\lambda/\ell - g(W)}} < \infty.
\end{equation}
As a result, the Malthusian parameter $\alpha$ appearing in Condition~\hyperlink{c1}{\textbf{C1}} is given by the unique $\alpha > 0$ such that
\begin{equation} \label{eq:malth-gpaf}
    \E{\frac{h(W)}{\alpha/\ell - g(W)}} = 1.
\end{equation}
Note that the parameter $\ell$ in the model has the effect of re-scaling the Malthusian parameter $\alpha$. Also, since $\alpha \geq \ell \cdot \esssup(g)$, if $\E{h(W)} < \infty$, Theorem~\ref{th:strong-law-parti} applies and $\alpha$ may also be interpreted as the almost sure limit of the partition function associated with the process. Now, in the context of this model, the limiting value $p^{\alpha}_{k}(\cdot)$ from Theorem~\ref{th:degree-dist} is such that
\begin{equation} \label{eq:deg-dist-form-gpaf}
p^{\alpha}_{k}(B) = \E{\frac{\alpha}{g(W)k + h(W) + \alpha} \prod_{i=0}^{k-1} \frac{g(W)i + h(W)}{g(W)i + h(W) + \alpha} \mathbf{1}_{B}(W)}.
\end{equation}
Now, recall Stirling's approximation, which states that
\begin{equation} \label{eq:stirling_gamma_approx}
\Gamma(z) = \left(1 + O(1/z)\right)z^{z - \frac{1}{2}} e^{-z},
\end{equation}
\bluee{as $z \to \infty$}.
If $g(W) > 0$ on $B$, by dividing the numerator and denominator of terms inside the product in~\eqref{eq:deg-dist-form-gpaf}, we obtain a ratio of Gamma functions. Thus, by applying Stirling's approximation, on any measurable set $B$ on which $g,h$ are bounded,  we have 
\[p^{\alpha}_{k}(B) = \left(1 + O(1/k)\right)\E{c_{B}k^{-\left(1 + \frac{\alpha}{g(W)}\right)} \mathbf{1}_{B}(W)},\]
where $c_{B}$, which comes from the term outside the product in~\eqref{eq:deg-dist-form-gpaf}, depends on $g$ and $h$ but not $k$. Thus, the distribution of $(p^{\alpha}_{k}(B))_{k \in \mathbb{N}_{0}}$ follows what one might describe as an `averaged' power law. Moreover, in the case \bluee{that} $\ell = 1$, \bluee{we have} $\alpha \geq \esssup(g)$, thus, \[\E{c_{B}k^{-\left(1 + \frac{\alpha}{g(W)}\right)} \mathbf{1}_{B}(W)} \geq c' k^{-2}\]
for some $c' > 0$. It has been observed that real world complex networks, have power law degree distributions where the observed power law exponent lies between $2$ and $3$ (see, for example, \cite{vanderhofstad2016}). Note that by~\eqref{eq:malth-gpaf}, $\alpha$ depends on both $h$ and $g$, so that keeping $g$ fixed and making $h$ smaller has the effect of reducing the exponent of the power law.

In the remainder of this section we set $\ell =1$, for brevity. The arguments may be adapted in a similar manner to the case $\ell >1$.

\subsection{A Condensation Phenomenon in the \texorpdfstring{$\gpaf$}{}-tree when Condition~\texorpdfstring{\protect\hyperlink{c1}{\textbf{C1}}}{} Fails}  \label{subsec-cond}
 Recall that, in the $\gpaf$-tree, if $\lambda \geq \esssup{(g)}$ we have
 \begin{equation} \label{eq:m-lambda}
 m(\lambda, \mathbb{R}_{+}) = \E{\frac{h(W)}{\lambda - g(W)}},
 \end{equation}
 and if $\lambda < \esssup{(g)}$,  we have $m(\lambda, \mathbb{R}_{+}) = \infty$. If we define 
\begin{equation} \label{eq:big-lambda-def}
\Lambda := \left\{\lambda > 0: m(\lambda, \mathbb{R}_{+}) < \infty \right\},
\end{equation} 
in this subsection, we consider the case that the $\gpaf$-tree fails to satisfy Condition~\hyperlink{c1}{\textbf{C1}} by having $m(\lambda, \mathbb{R}_{+}) \leq 1$ for all $\lambda \in \Lambda$.
 We show that in this case the $\gpaf$-tree satisfies a formula for the degree distribution of the same form as \eqref{eq:def-1-param-family}. Moreover, if $\lambda^*:= \inf{\left(\Lambda\right)}$ and $m(\lambda^*, \mathbb{R}_{+}) < 1$, this model exhibits a condensation phenomenon, as described in Theorem~\ref{thm:cond-edge-dist}. We remark that such results have been proved for the case of the preferential attachment tree with multiplicative fitness, i.e., the case $h \equiv g$, in \cite{dereich-ortgiese}, in a more general framework; that is to say encompassing other models apart from a tree.

In Section~\ref{subsub:gpaf-cond-main} we state our main result, Theorem~\ref{thm:cond-edge-dist} and discuss interesting implications in Section~\ref{subsub:gpaf-cond-implic}. In Section~\ref{subsub:gpaf-cond-coupling} we state and prove Lemma~\ref{lem:coupl} which is the crucial tool used in proofs of the theorem. The proof of Theorem~\ref{thm:cond-edge-dist} is deferred to Section~\ref{subsub:proof-of-thm-cond}. 

\redd{
\begin{rmq} \label{rem:atom-max}
If $\cT$ does not satisfy $\hyperlink{c1}{\textbf{C1}}$ and $\E{h(W)} <  \infty$, we must have $\mu\left(\left\{x: g(x) = \esssup{g}\right\}\right) = 0$, since otherwise, for each $\lambda > \lambda^{*}$ we have $m(\lambda, \mathbb{R}_+) < \infty$, and by monotone convergence $\lim_{\lambda \downarrow \lambda^*} m(\lambda, \mathbb{R}_+) \uparrow \infty$.
\end{rmq}
}
\subsubsection{Theorem~\ref{thm:cond-edge-dist}: Condensation in the \texorpdfstring{$\gpaf$}{}-tree} \label{subsub:gpaf-cond-main}
\redd{Our main result in this subsection is the following theorem, which demonstrates the possibility of condensation in this model. Recall that in this section, we have $\lambda^* = \esssup{(g)}$. We then define the following family of sets of positive $\mu$-measure, depending on a parameter $\eps > 0$:
\begin{equation} \label{eq:def-condensate-sets}
    \mathcal{M}_{\eps} : = \left\{x: g(x) \geq \lambda^{*} - \eps\right\}.
\end{equation}}
\begin{theorem} \label{thm:cond-edge-dist}
Suppose $\cT = \left(\mathcal{T}_{t}\right)_{t \geq 0}$ is a $\gpaf$-tree, with associated functions $g$, where $g$ is bounded, $\E{h(W)} < \infty$, and Condition~\hyperlink{c1}{\textbf{C1}} fails. 
Then we have the following assertions:
\begin{enumerate}
\redd{\item For any measurable set $B$, such that for some $\eps > 0$ we have $B \subseteq \mathcal{M}^{c}_{\eps}$,
\begin{equation} \label{eq:setwise-corrected}
    \frac{\Xi(t,B)}{\ell t} \xrightarrow{t \to \infty} \E{\frac{h(W)}{\lambda^{*} - g(W)} \mathbf{1}_{B}(W)}, \quad \text{almost surely.}
\end{equation}
In particular,  if $\E{\frac{h(W)}{g(\wmax) - g(W)}} < 1$, for $\eps > 0$ sufficiently small, we have, \[\frac{\Xi(t,\mathcal{M}_{\eps})}{\ell t} \xrightarrow{t \to \infty} 1 - \E{\frac{h(W)}{\lambda^{*} - g(W)} \mathbf{1}_{\mathcal{M}^{c}_{\eps}}(W)} > \E{\frac{h(W)}{\lambda^{*} - g(W)} \mathbf{1}_{\mathcal{M}_{\eps}}(W)} = m(\lambda^*, \mathcal{M}_{\eps}),\] so that this model exhibits a condensation phenomenon, as described before Conjecture~\ref{conj:gen-degrees} in Section~\ref{sec:quant-of-interest}.
}
\item For any measurable set \redd{$B \subseteq \mathbb{R}_{+}$,} almost surely we have 
\begin{equation} \label{eq:deg-dist-cond}
\frac{N_{k}(t,B)}{t} \xrightarrow{t \to \infty} \E{\frac{\lambda^{*}}{g(W)k + h(W) + \lambda^{*}} \prod_{i=0}^{k-1} \frac{g(W)i + h(W)}{g(W)i + h(W) + \lambda^{*}} \mathbf{1}_{B}(W)} = p^{\lambda^{*}}_{k}(B).
\end{equation}
\item The partition function \bluee{satisfies} \[\frac{\cZ_{t}}{t} \xrightarrow{t \to \infty} \lambda^{*}, \quad \text{almost surely.} \]
\end{enumerate}
\end{theorem}

Assertion 1 of Theorem~\ref{thm:cond-edge-dist} leads to the following weak convergence result, \redd{if $g$ is increasing, and $\Supp{\mu} \subseteq [0,\wmax]$, where $\wmax := \sup{\left(\Supp{\mu}\right)} < \infty$. Define the measure $\pi(\cdot)$ such that, for any measurable set $B$, \[\pi(B) = \E{\frac{h(W)}{g(\wmax) - g(W)} \mathbf{1}_{B}(W)} + \left(1 - \E{\frac{h(W)}{g(\wmax) - g(W)}}\right)\delta_{\wmax}(B).\]}
\redd{
\begin{cor} \label{cor:cond-weak-conv}
Under the above assumptions, with respect to the weak topology, \[\frac{\Xi(t,\cdot)}{\ell t} \xrightarrow{t \to \infty} \pi(\cdot), \quad \text{almost surely.}\] 
\end{cor}
\begin{rmq}
Corollary~\ref{cor:cond-weak-conv} is the form in which \emph{condensation} results usually appear in the literature, showing that the limit of the sequence $\frac{\Xi(t,\cdot)}{\ell t}$ is no longer absolutely continuous with respect to $\mu$. In this regard, Corollary~\ref{cor:cond-weak-conv} generalises the case $f(i,W) = (i+1)W$ which has already been proved in \cite{borgs-chayes}.
\end{rmq}
\begin{proof}[Proof of Corollary~\ref{cor:cond-weak-conv}]
In view of the Portmanteau theorem, it suffices to prove that, almost surely, for any open set $O \subseteq [0, \wmax]$, we have 
\[
\liminf_{t \to \infty} \frac{\Xi(t, O)}{\ell t} \geq \pi(O). 
\]
Now, it is well-known that there exists a countable family of measurable sets $D_{1}, D_2, \ldots$ such that any open subset of $[0, \wmax]$ may be expressed as a countable disjoint union of elements of this family.\footnote{For example, one may take the set of all \emph{Dyadic intervals}, with endpoints of the form $j \cdot 2^{-n} \wmax$, $(j+1) \cdot 2^{-n} \wmax$, where $j, n \in \N_{0}$.} Fix such a family. Now, by Assertion 1 of Theorem~\ref{thm:cond-edge-dist}, it is the case that, almost surely, 
\begin{equation} \label{eq:countable-family-setwise}
\lim_{t \to \infty} \frac{\Xi(t, S)}{t} = \pi(S) \quad \forall S \in \mathcal{C},
\end{equation}
where $C$ is the countable collection of sets
\[
\mathcal{C} := \left\{D_{i} \cap \mathcal{M}^{c}_{1/j}, \, \mathcal{M}_{1/j}: \, i, j \in \mathbb{N}\right\}.
\]
Now, let $O$ be an arbitrary open set. First, suppose that $\wmax \notin O$. Then, for a pairwise disjoint collection $D_{i_1}, D_{i_2}, \ldots$ such that $O = \bigcup_{\ell \in \mathbb{N}} D_{i_\ell}$, for each $j, k \in \mathbb{N}$ we have 
\[
\liminf_{t \to \infty} \frac{\Xi(t,O)}{t} \geq \sum_{\ell = 1}^{k} \liminf_{t \to \infty} \frac{\Xi(t, D_{i_\ell} \cap \mathcal{M}^{c}_{1/j})}{t} \geq \sum_{\ell = 1}^{k} \pi(D_{i_\ell} \cap \mathcal{M}^{c}_{1/j}). 
\]
Taking limits in $j$ and $k$, the right hand side converges to $\pi(O)$, as required.
On the other hand, since $g$ is increasing, for each $\eps > 0$, there exists $\delta = \delta(\eps) > 0$ such that $\mathcal{M}_{\eps} \subseteq [\wmax - \delta, \wmax]$, and, moreover, $\delta \to 0$ as $\eps \to 0$. Therefore, because $O$ is open, for all $j$ sufficiently large, we have $\mathcal{M}_{1/j} \subseteq O$. But then, for a pairwise disjoint collection $D_{i_1}, D_{i_2}, \ldots$ such that $O = \bigcup_{\ell \in \mathbb{N}} D_{i_\ell}$ we have $O = \mathcal{M}_{1/j} \cup \left(\bigcup_{\ell \in \mathbb{N}} D_{i_\ell} \cap \mathcal{M}^{c}_{1/j}\right)$. Therefore, 
\begin{linenomath*}
\begin{align*}
\liminf_{t \to \infty} \frac{\Xi(t,O)}{t} &\geq \liminf_{t \to \infty} \frac{\Xi(t, \mathcal{M}_{1/j})}{t} + \sum_{\ell = 1}^{k} \liminf_{t \to \infty} \frac{\Xi(t, D_{i_\ell} \cap \mathcal{M}^{c}_{1/j})}{t} 
\\ & \geq \pi(\mathcal{M}_{1/j}) + \sum_{\ell = 1}^{k} \pi(D_{i_\ell} \cap \mathcal{M}^{c}_{1/j}), 
\end{align*}
\end{linenomath*}
so that, by again taking limits in $j$ and $k$, the right hand side converges to $\pi(O)$. The result follows. 
\end{proof}
}

\subsubsection{Some Interesting Implications of the Condensation Phenomenon}
\label{subsub:gpaf-cond-implic}
The condensation result in Theorem~\ref{thm:cond-edge-dist}  has interesting implications for the $\gpaf$-tree. Informally, the parameter $g(w)$ measures the extend to which the `popularity' of a vertex with weight $w$ is reinforced by the number of its neighbours, while the parameter $h(w)$ represents its `initial popularity'. The condensation phenomenon then depends on both $\mu$ and $h$, in the sense that condensation occurs if vertices of high weight are `rare enough' and the initial popularity is `low enough'. More precisely, 
\redd{if $\E{h(W)}, \lambda^{*} < \infty$}  we \redd{see that the tree displays the following interesting features:}
\begin{enumerate}
    \item By Remark~\ref{rem:atom-max}, if $\mu$ is such that $\E{\frac{1}{\lambda^* - g(W)}} = \infty$, Condition~\textbf{C1} is satisfied in this model, and thus, the model does not demonstrate a condensation phenomenon.
    \item Otherwise, if $g$ is such that $\E{\frac{1}{\lambda^* - g(W)}} = \redd{C^{'}} < \infty$, then either   
    \[
    \E{\frac{h(W)}{\lambda^* - g(W)}} > 1 \quad \text{ or } \quad \E{\frac{h(W)}{\lambda^* - g(W)}} \leq 1.
    \]
    In the first case, Condition~\textbf{C1} is satisfied, but fails in the second case. However, in the second case, if the inequality is strict, condensation arises. Therefore, for fixed $g$, condensation in this model arises by reducing $h$ sufficiently pointwise, for example, by replacing $h$ by $K \cdot h$ where $K < 1/\redd{C^{'}}$ is a constant. 
    \item \redd{It is the `reinforcement' rather that the `initial popularity' that drives the condensation, in the sense that, it may be the case, for example, that $h$ is minimised on the sets $\mathcal{M}_{\eps}$ where the condensation occurs.}
\end{enumerate}

\subsubsection{A Coupling Lemma}
\label{subsub:gpaf-cond-coupling}
In order to prove Theorem~\ref{thm:cond-edge-dist}, we first prove an additional lemma.
For each $\eps > 0$ such that \bluee{$\eps < \lambda^*$}, let $\cT^{+\eps} = (\cT^{+\eps}_{t})_{t \geq 0}$ and $\cT^{-\eps} = (\cT^{-\eps}_{t})_{t \geq 0}$ denote $\gpaf$-trees with the same \bluee{set of weights, but with the function $g$ modified to $g_{+\eps}$ and $g_{-\eps}$ respectively \bluee{on the set $\mathcal{M}_{\eps}$ from~\eqref{eq:def-condensate-sets}} such that 
\[
g_{+\eps} := g\mathbf{1}_{\mathcal{M}^{c}_{\eps}} + \lambda^*\mathbf{1}_{\mathcal{M}_{\eps}} \quad \text{and} \quad g_{-\eps} := g\mathbf{1}_{\mathcal{M}^{c}} + (\lambda^{*} - \eps)\mathbf{1}_{\mathcal{M}_{\eps}} 
\]}
\bluee{The motivation behind these choices of $\cT^{+\eps}$ and $\cT^{-\eps}$ is that, because $\cT$ does not satisfy Condition~\hyperlink{c1}{\textbf{C1}}, is that $g_{+\eps}$ and $g_{-\eps}$ attain their essential suprema on sets of positive measure}. Thus, because $\E{h(W)} < \infty$, 
by Remark~\ref{rem:atom-max} these \bluee{auxiliary} trees satisfy Condition~\hyperlink{c1}{\textbf{C1}}, and we may apply the theorems from Section~\ref{sec:cont-time} with regards to these trees. Then, \redd{using the fact that} these trees provide sufficiently good `approximations' of the tree $\cT$, we may deduce \redd{our} results by sending $\eps$ to $0$. 

In this vein, let $N^{+\eps}_{\geq k}(t,B), N_{\geq k}(t,B)$ and $N^{-\eps}_{\geq k}(t,B)$ denote the number of vertices with out-degree~$\geq k$ and weight belonging to the set $B$ in $\cT^{+\eps}_{t}, \cT_{t}$ and $\cT^{-\eps}_t$ respectively. In their respective trees, we also denote by 
$\cZ^{+\eps}_t, \cZ_t$ and  $\cZ^{-\eps}_t$ the partition functions at time $t$. Finally, for brevity, we write $f^{(+\eps)}_{t}(v), f_{t}(v)$ and $f^{(-\eps)}_{t}(v)$ for the fitness of a vertex $v$ at time $t$ in each of these models. \bluee{For example},~$f_{t}(v) = g(W_v)\outdeg{v, \cT_{t}} + h(W_v)$. 
\begin{lemma} \label{lem:coupl}
For all $\eps > 0$, there exists a coupling $(\hat{\cT}^{+\eps}, \hat{\cT}, \hat{\cT}^{-\eps})$ of these processes such that, on the coupling, for all $t \in \N_0$, 
\begin{enumerate}
\item \bluee{For all measurable sets $B  \subseteq \mathcal{M}_{\eps}^{c}$ we have $\Xi^{+\eps}(t,B) \leq \Xi(t,B) \leq \Xi^{-\eps}(t,B),$} 
\item For all measurable sets $B \subseteq \mathcal{M}_{\eps}^{c}$ and $k \in \N_0$, we have
\[N^{+\eps}_{\geq k}(t,B) \leq N_{\geq k} (t,B) \leq N^{-\eps}_{\geq k}(t,B);\]
\item \bluee{We have the inequalities} $\cZ^{-\eps}_{t} \leq \cZ_t \leq \cZ^{+\eps}_t.$
\end{enumerate}
\end{lemma}
\begin{proof}[Proof of Lemma~\ref{lem:coupl}]
\bluee{We construct the trees having the same sequence of weights $(W_{i})_{i \geq 0}$, so that the dynamics of the model are only influenced by differences in the function $g$ in the respective models. Thus, at time $0$ each of the models consist of single vertices labelled $0$ with weight $W_0$ and having fitness given by $h(W_0)$.}
Assume, that at the $t$th time-step,
\[(\hat{\cT}^{+\eps}_{n})_{0 \leq n \leq t} \sim (\cT^{+\eps}_{n})_{0 \leq n \leq t}, \quad (\hat{\cT}_{n})_{0 \leq n \leq t} \sim (\cT_{n})_{0 \leq n \leq t} \quad \text{and} \quad (\hat{\cT}^{-\eps}_{n})_{0 \leq n \leq t} \sim (\cT^{-\eps}_{n})_{0 \leq n \leq t}.\]
In addition, assume, by induction, that we have $\cZ^{-\eps}_{t} \leq \cZ_t \leq \cZ^{+\eps}_t$ and for each vertex $v$ with \bluee{$W_{v} \in \mathcal{M}^{c}_{\eps}$}
\begin{equation} \label{eq:mono-deg-couple}
\outdeg{v, \hat{\cT}^{+\eps}_t} \leq \outdeg{v, \hat{\cT}_{t}} \leq \outdeg{v, \hat{\cT}^{-\eps}_t}.
\end{equation} 
Note that~\eqref{eq:mono-deg-couple}, \bluee{and the fact that the trees have the same number of directed edges} imply the first and the second assertions of the lemma up to time $t$. As a result, for each vertex $v$ with \bluee{$W_{v} \in \mathcal{M}^{c}_\eps$} we have $f^{(+\eps)}_t(v) \leq f_t(v) \leq f^{(-\eps)}_{t}(v)$. Now, for the $(t+1)$st step 
\begin{itemize}
    \item Introduce a vertex $t+1$ with weight $W_{t+1}$ sampled independently from $\mu$. 
    \item Form $\hat{\cT}^{-\eps}_{t+1}$ by sampling the parent $v$ of $t+1$ independently according to the law of $\mathcal{T}^{-\eps}$, i.e., with probability proportional to $f^{(-\eps)}_t(v)$. Then, in order to form $\hat{\cT}_{t+1}$ sample an independent uniformly distributed random variables $U_1$ on $[0,1]$. 
    \begin{itemize}
        \item If $U_1 \leq \frac{\cZ^{-\eps}_{t}f_{t}(v) }{\cZ_t f^{(-\eps)}_{t}(v)}$ and \bluee{$W_{v} \in \mathcal{M}_{\eps}^{c}$}, select $v$ as the parent of $t+1$ in $\hat{\cT}_{t+1}$ as well.
        \item Otherwise, form $\hat{\cT}_{t+1}$ by selecting the parent $v'$ of $t+1$ with probability proportional to $f_{t}(v')$ out of all all the vertices with weight \bluee{$W_{v'} \in \mathcal{M}_{\eps}$}.
    \end{itemize}
    \item Then form $\hat{\cT}^{+\eps}_{t+1}$ \bluee{from $\hat{\cT}_{t+1}$ in an identical manner to the way $\hat{\cT}_{t+1}$ is formed from $\hat{\cT}^{-\eps}$, with another, independent uniform random variable $U_2$ on [0,1]}.
\end{itemize}
It is clear that $\hat{\cT}^{-\eps}_{t+1} \sim \cT^{-\eps}_{t+1}$. On the other hand, in $\hat{\cT}_{t+1}$ the probability of choosing a parent $v$
of $t+1$ with weight \bluee{$W_v \in \mathcal{M}_{\eps}^{c}$} is 
\[\frac{\cZ^{-\eps}_{t}f_{t}(v)}{\cZ_t f^{(-\eps)}_{t}(v)} \times \frac{f_t^{(-\eps)}(v)}{\cZ^{-\eps}_t} = \frac{f_t(v)}{\cZ_t},\]
whilst the probability of choosing a parent $v'$ with weight \bluee{$W_{v'} \in \mathcal{M}_{\eps}$} is 
\begin{linenomath*}
\begin{align*}
    & \frac{f_t(v')}{\sum_{v :W_{v} \geq \wmax - \eps} f_{t}(v)} \left(\sum_{v: W_{v} < \wmax - \eps}\left(1- \frac{\cZ^{-\eps}_{t}f_{t}(v)}{\cZ_t f^{(-\eps)}_{t}(v)}\right) \frac{f^{(-\eps)}_t(v)}{\cZ^{-\eps}_t}\right) \\ & \hspace{5.5cm}
    + \frac{f_t(v')}{\sum_{v:W_{v} \geq \wmax - \eps} f_{t}(v)} \left(\sum_{v:W_v \geq \wmax - \eps} \frac{f^{(-\eps)}_t(v)}{\cZ^{-\eps}_t}\right)
    \\
    & \hspace{3cm} = \frac{f_t(v')}{\sum_{v :W_{v} \geq \wmax - \eps} f_{t}(v)}\left(\sum_{v} \frac{f^{(-\eps)}_t(v)}{\cZ^{-\eps}_t}
- \sum_{v: W_{v} < \wmax - \eps}  \frac{f_{t}(v)}{\cZ_t} \right)\\
& \hspace{3cm} =  \frac{f_t(v')}{\sum_{v :W_{v} \geq \wmax - \eps} f_{t}(v)} \left(1 -  \frac{\sum_{v: W_{v} < \wmax - \eps}f_{t}(v)}{\cZ_t}\right) = \frac{f_t(v')}{\cZ_t},
\end{align*}
\end{linenomath*}
where we use the fact that $\sum_{v} f_{t}(v) = \cZ_t$. Thus, we have $\hat{\cT}_{t+1} \sim \cT_{t+1}$. Moreover, either the same vertex is chosen as the parent of $t+1$ in both $\hat{\cT}^{-\eps}_{t+1}$ and  $\hat{\cT}_{t+1}$, or a vertex of \bluee{weight belonging to $\mathcal{M}_{\eps}$} 
is chosen as the parent of $t+1$ in $\hat{\mathcal{T}}_{t+1}$. This implies the left inequality in \eqref{eq:mono-deg-couple} and in addition, when combined with the fact that \bluee{$g_{-\eps}(W_{t+1}) \leq g(W_{t+1})$}, guarantees that $\cZ^{-\eps}_{t+1} \leq \cZ_{t+1}$. The proof that $\hat{\cT}^{+\eps}_{t+1} \sim \cT^{+\eps}_{t+1}$, the right inequality in \eqref{eq:mono-deg-couple} and $\cZ_{t+1} \leq \cZ^{+\eps}_{t+1}$ are similar, so we may thus iterate the coupling.
\end{proof}
\subsubsection{Proof of Theorem~\ref{thm:cond-edge-dist}} \label{subsub:proof-of-thm-cond} 
In order to prove Theorem~\ref{thm:cond-edge-dist}, we use the auxiliary $\gpaf$-trees $\cT^{+\eps}$ and $\cT^{-\eps}$ according to Lemma~\ref{lem:coupl}.

\begin{proof}[Proof of Theorem~\ref{thm:cond-edge-dist}]
\bluee{For the first assertion, suppose that $B$ is measurable, with $B \subseteq \mathcal{M}^{c}_{\eps}$. Then, if we define the corresponding quantities  $\Xi^{+\eps}(t,\cdot)$, $\Xi^{-\eps}(t,\cdot)$ associated with $\cT^{+\eps}$ and $\cT^{-\eps}$, from the coupling in Lemma~\ref{lem:coupl}, we have
\[
\frac{\Xi^{+\eps}(t,B)}{t} \leq \frac{\Xi(t,B)}{t} \leq \frac{\Xi^{-\eps}(t,B)}{t}.
\] 
Recall that the auxiliary trees $\cT^{+\eps}$ and $\cT^{-\eps}$ have associated functions $g_{+\eps}$ and $g_{-\eps}$ which attain their maxima on a set of positive measure, and thus, satisfy Condition~\hyperlink{c1}{\textbf{C1}}, with Malthusian parameters $\alpha^{(+\eps)} > \lambda^{*}$ and $\alpha^{(-\eps)} > \lambda^{*} - \eps$. Moreover, note that, by the definition of $g_{-\eps}$,
\begin{linenomath*}
\begin{align*}
\E{\frac{h(W)}{\lambda^{*} - g_{-\eps}(W)}} & \leq  \E{\frac{h(W)}{\lambda^* - g(W)}} \leq 1,
\end{align*}
\end{linenomath*}
so that, recalling~\eqref{eq:malth-gpaf}, $\alpha^{(-\eps)} \leq \lambda^*$.  Thus, by Lemma~\ref{lem:coupl}, Theorem~\ref{th:edge-asymptotics} and dominated convergence, almost surely we have 
\[
\limsup_{t \to \infty} \frac{\Xi(t,B)}{t} \leq \lim_{\eps \to 0} \E{\frac{h(W)}{\alpha^{(-\eps)} - g_{-\eps}(W)} \mathbf{1}_{B}(W)}
= \E{\frac{h(W)}{\lambda^{*} - g(W)} \mathbf{1}_{B}(W)}.
\]
Now, we also have $\lim_{\eps \to 0} \alpha^{(+\eps)} = \lambda^{*}$. Indeed, suppose by way of a contradiction that $\lim_{\eps \to 0} \alpha^{(+\eps)} = \alpha' > \lambda^{*})$. Then,
because $\E{h(W)} < \infty$, by dominated convergence we have
\[
1 = \lim_{\eps \to 0} \E{\frac{h(W)}{\alpha^{(+\eps)} - g_{+\eps}(W)}} = \E{\frac{h(W)}{\alpha' - g(W)}}.
\]
But then, \eqref{eq:malth-gpaf} is satisfied for $\lambda$ such that $\lambda^{*} < \lambda < \alpha'$, contradicting the assumption that Condition~\hyperlink{c1}{\textbf{C1}} fails for $\cT$.

It follows that $\lim_{\eps \to 0} \alpha^{(+\eps)} = \lambda^{*}$ and thus, by Lemma~\ref{lem:coupl} and dominated convergence, almost surely we have
\[\liminf_{t \to \infty} \frac{\Xi(t,B)}{t} \leq \lim_{\eps \to 0} \E{\frac{h(W)}{\alpha^{(+\eps)} - g(W)} \mathbf{1}_{B}(W)}
= \E{\frac{h(W)}{\lambda^{*} - g(W)} \mathbf{1}_{B}(W)}.  
\]
The first assertion follows. 
}

For the second assertion, given a measurable set $B$, for each $\eps > 0$, set \bluee{$B^{\eps}:= B \cap \mathcal{M}_{\eps}$}.  
\bluee{Then, by} Lemma~\ref{lem:coupl}, almost surely we have 
\bluee{\begin{linenomath*}
\begin{align*}
\limsup_{t \to \infty} \frac{N_{\geq k} (t, B)}{t} & \leq \liminf_{\eps \to 0} \left(\E{\prod_{i=0}^{k-1} \frac{g_{-\eps}(W)i + h(W)}{g_{-\eps}(W)i + h(W) + \alpha^{(-\eps)}} \mathbf{1}_{B^{\eps}}(W)}
 + \mu(\mathcal{M}_{\eps})\right)
 \\ & = \liminf_{\eps \to 0} \E{\prod_{i=0}^{k-1} \frac{g(W)i + h(W)}{g(W)i + h(W) + \alpha^{(-\eps)}} \mathbf{1}_{B^{\eps}}(W)}
\\ & = \E{\prod_{i=0}^{k-1} \frac{g(W)i + h(W)}{g(W)i + h(W) + \lambda^{*}}\mathbf{1}_{B}(W)}.
 \end{align*}
\end{linenomath*}}
Similarly, almost surely,
\bluee{
\begin{linenomath*}
\begin{align*}
\liminf_{t \to \infty} \frac{N_{\geq k} (t, B)}{t} & \geq \limsup_{\eps \to 0} \E{\prod_{i=0}^{k-1} \frac{g_{+\eps}(W)i + h(W)}{g_{+\eps}(W)i + h(W) + \alpha^{(+\eps)}} \mathbf{1}_{B^{\eps}}(W)} \\ & = \limsup_{\eps \to 0} \E{\prod_{i=0}^{k-1} \frac{g(W)i + h(W))}{g(W)i + h(W) + \alpha^{(+\eps)}} \mathbf{1}_{B^{\eps}}(W)}
\\ & = \E{\prod_{i=0}^{k-1} \frac{g(W)i + h(W))}{g(W)i + h(W) + \lambda^{*}}\mathbf{1}_{B}(W)}.
\end{align*}
\end{linenomath*}}

Finally, for the last assertion, by Lemma~\ref{lem:coupl}, for each $t \in \mathbb{N}_0$ we have 
\[
\frac{\cZ^{-\eps}_t}{t} \leq \frac{\cZ_t}{t} \leq \frac{\cZ^{+\eps}_t}{t}.
\]
Taking limits as $t$ goes to infinity and applying Theorem~\ref{th:strong-law-parti}, the result follows in a similar manner to the previous assertions. 
\end{proof}

\subsection{Degenerate Degrees in the \texorpdfstring{$\gpaf$}{}-tree when Condition~\texorpdfstring{\protect\hyperlink{c1}{\textbf{C1}}}{} Fails} \label{subsec:deg-degs-gpaf}
In this subsection, we show that if the $\gpaf$-tree fails to satisfy Condition~$\hyperlink{c1}{\textbf{C1}}$ by having $m(\lambda, \mathbb{R}_{+}) = \infty$ for all $\lambda > 0$, almost surely the proportion of vertices that are leaves tends to $1$. Consequentially, the limiting mass of edges `escapes to infinity', as described in Theorem~\ref{th:conn-transitions} below. Note that Condition~\textbf{C1} fails in this manner in the $\gpaf$ tree if $\esssup{(g)} = \infty$ or $\E{h(W)} = \infty$. We remark that similar results to Theorem~\ref{th:conn-transitions} have been shown in preferential attachment model with multiplicative fitness with $\mu$ having finite support \cite[Theorem~6]{borgs-chayes} and preferential attachment model with additive fitness (the \emph{extreme disorder} regime in \cite[Theorem~2.6]{bas}. These cases correspond to $h(x) \equiv 0$ and $g(x) \equiv 1$ respectively. 
\redd{In a similar vein to the start of Section~\ref{subsub:gpaf-cond-main}, in this section we will require the following families of sets: for each $m \in \mathbb{N}$, we set 
\begin{equation} \label{eq:g-and-h-upper-trunc}
    \mathscr{G}_{m} := \left\{x: g(x) \geq m\right\}, \: \mathscr{H}_{m} := \left\{x: h(x) \geq m \right\} \: \text{ and } \mathscr{M}_{m} := \mathscr{G}_m \cup \mathscr{H}_m. 
\end{equation}}
\begin{theorem}\label{th:conn-transitions}
Suppose $\cT = (\cT_{t})_{t \geq 0}$ is a $\gpaf$-tree, with associated functions $g,h$, such that $\esssup{(g)} = \infty$ or $\E{h(W)} = \infty$. Then we have the following assertions:
\begin{enumerate}
    \item \redd{For any measurable set $B$, such that for some $m' \in \mathbb{N}$ we have $B \subseteq \mathscr{M}^{c}_{m'}$, 
    \[\frac{\Xi(t,B)}{t} \xrightarrow{t \to \infty} 0, \quad \text{almost surely.}\] }
 \item For any measurable set \redd{$B \subseteq \mathbb{R}_{+}$}, we have
\begin{equation} \label{eq:deg-degs1}
\frac{N_0(t, B)}{t} \xrightarrow{t \to \infty}  \mu(B), \quad \text{almost surely.}
\end{equation} 
 \item The partition function satisfies \[\frac{\cZ_{t}}{t} \xrightarrow{t \to \infty} \infty, \quad \text{almost surely.} \]
\end{enumerate}
\end{theorem}

\begin{proof}
This is similar to the proof of Theorem~\ref{thm:cond-edge-dist}, \redd{however we require some different notation. For each $m \in \mathbb{N}$, let $\cT^{m} = (\cT^{m}_{t})_{t \geq 0}$ and $\cT^{m,m} = (\cT^{m,m}_{t \geq 0})$ denote the tree $\cT$, but modified on the sets $\mathscr{G}_{m}$ and $\mathscr{H}_{m}$. In particular, if we define $g_{m}, h_{m}$ such that 
\[
g_{m} := g\mathbf{1}_{\mathscr{G}_{m}^{c}} + m \mathbf{1}_{\mathscr{G}_{m}} \quad \text{ and } \quad h_{m} := h\mathbf{1}_{\mathscr{H}_{m}^{c}} + m\mathbf{1}_{\mathscr{H}_{m}},
\]
we define $\cT^{m}$ with the associated functions $g_{m}, h$, and $\cT^{m,m}$ with the associated functions $g_{m}, h_{m}$. Then, by mimicking the approach from the coupling in Lemma~\ref{lem:coupl}, for each $m \in \mathbb{N}$ we may couple the processes $(\hat{\cT}^{m,m}, \hat{\cT}^{m}, \hat{\cT})$, such that, for all $t \in \mathbb{N}_0$ their respective partition functions satisfy $\cZ^{m,m}_{t} \leq \cZ^{m}_{t} \leq \cZ_{t}$; for each vertex $v'$ with $W_{v'} \in \mathscr{H}_{m}^{c}$
\[
\outdeg{v', \hat{\cT}^{m}_t} \leq \outdeg{v, \hat{\cT}^{m,m}_{t}}
\]
and for each vertex $v$ with $W_v \in \mathscr{G}_{m}^{c}$
\[
\outdeg{v, \hat{\cT}_t} \leq \outdeg{v, \hat{\cT}^{m}_{t}}.
\]
In this coupling, at each time-step $t$, one samples $\hat{\cT}^{m,m}_t$ first, uses this (with another uniformly distributed random variable) to construct $\hat{\cT}^{m}_t$ and then uses this to construct $\hat{\cT}_{t}$. Therefore, we have the following claim: for a measurable set $B$ let $\Xi^{m,m}(t, B)$ and $N^{m,m}_{\geq k}(t,B)$ denote the counterparts of $\Xi(t,B)$ and $N_{\geq k}(t,B)$ with respect to the tree $\cT^{m,m}$
\begin{claim}
For all $m \in \mathbb{N}$, there exists a coupling $(\hat{\cT}^{m,m}, \hat{\cT})$ of  $\cT^{m,m}$ and $\cT$ such that, on the coupling, for all $t \in \N_{0}$ we have the following:
\begin{enumerate}
    \item For all measurable sets $B \subseteq \mathscr{M}^{c}_{m}$ we have $\Xi(t, B) \leq \Xi^{m,m}(t, B)$;
    \item For all measurable sets $B \subseteq \mathscr{M}^{c}_{m}$ and $k \in \mathbb{N}_{0}$ we have $N_{\geq k}(t, B) \leq N^{m,m}_{\geq k}(t,B)$; 
    \item We have the inequality $\cZ^{m,m}_{t} \leq \cZ_{t}$.
\end{enumerate}    
\end{claim}  
Now note that for all $m$ sufficiently large, $\cT^{m,m}$ satisfies \hyperlink{c1}{\textbf{C1}}: if $\esssup{(g)} = \infty$ then because $\E{h_{m}(W)} \leq m$ and $g_{m}$ attains its maximum $m$ on a set of positive measure, this follows from Remark~\ref{rem:atom-max}. Otherwise, for $m$ sufficiently large we have $g_{m} = g$, and for any $\lambda > \esssup{(g)}$, 
\[
\E{\frac{h_{m}(W)}{\lambda - g(W)}} < \infty, \quad \text{so that, by monotone convergence } \quad \lim_{m \uparrow \infty} \E{\frac{h_{m}(W)}{\lambda - g(W)}} = \infty.
\]
Thus, making $m$ larger if necessary, \hyperlink{c1}{\textbf{C1}} is satisfied for this choice of $\lambda$. In either case, let $\alpha^{(m)}$ denote the Malthusian parameter associated with $\cT^{m,m}$. Then, $\alpha^{(m)} > \esssup{(g_{m})}$ increases as $m$ increases, and, even if $\esssup{(g_{m})} < \infty$ we must have 
\begin{equation} \label{eq:alpha-div}
\lim_{m \uparrow \infty} \alpha^{(m)} = \infty.
\end{equation} 
Indeed, suppose this were not the case, and $\lim_{m \uparrow \infty} \alpha^{(m)} = \alpha' < \infty$. Then, by monotone convergence, 
\[
1 = \lim_{m \to \infty} \E{\frac{h_{m}(W)}{\alpha^{(m)}  - g(W)}} = \E{\frac{h(W)}{\alpha' - g(W)}} = \infty,
\]
since $\E{h(W)} = \infty$. 
Now, the assertions of Theorem~\ref{th:conn-transitions} follow the claim in a similar manner to the way the assertions of Theorem~\ref{thm:cond-edge-dist} follow from Lemma~\ref{lem:coupl}; for brevity we leave these as an exercise to the reader. 
%
%
}
%
\end{proof}
\redd{Now, as in the previous subsection, suppose that $g$ and $h$ are increasing, and $\Supp{\mu} \subseteq [0,\wmax]$, where $\wmax := \sup{\left(\Supp{\mu}\right)}$. The following corollary has a similar proof to Corollary~\ref{cor:cond-weak-conv}, and we therefore also leave it to the reader:
%
%
%
%
\begin{cor} \label{cor-weak-conv-2}
Under the above assumptions, with regards to the weak topology
    \[
\frac{\Xi(t,\cdot)}{t} \xrightarrow{t \to \infty} \delta_{\wmax}(\cdot), \quad \text{almost surely.}
\] 
\end{cor}
}

\section{Analysis of \texorpdfstring{$\rifs{\mu, f, \ell}$}{} assuming \texorpdfstring{\protect\hyperlink{c2}{{\textbf{C2}}}}{}} 
\label{sec:gen-conv}
By Theorem~\ref{th:strong-law-parti}, under certain conditions on the fitness function $f$ and~\hyperlink{c1}{\textbf{C1}}, Condition~\hyperlink{c2}{\textbf{C2}} is satisfied, i.e., 
\[\frac{\mathcal{Z}_t}{t} \xrightarrow{t \to \infty} \alpha, \quad \text{almost surely.}\]
However, Theorem~\ref{thm:cond-edge-dist} shows that this condition~may be satisfied despite Condition~\hyperlink{c1}{\textbf{C1}} failing. Therefore, in this section, we analyse the model under Condition~\hyperlink{c2}{\textbf{C2}}. We state and prove Theorem~\ref{th:general-convergence} below and state Theorem~\ref{th:deg-degs2}, leaving the details to the reader. These proofs rely on Proposition~\ref{prop:conv-of-mean}, proved in Section~\ref{subsec:upper-bound} and Section~\ref{subsec:deducing-convergence}; and Proposition~\ref{prop:conv-second-moment}, proved in Section~\ref{subsec:second-moment}.
\subsection{Main Results: Convergence in probability of \texorpdfstring{$N_{k}(n, B)/\ell n$}{} under \texorpdfstring{\protect\hyperlink{c2}{{\textbf{C2}}}}{}}
\begin{theorem} \label{th:general-convergence}
Assume \textbf{C2}. Then, for any measurable set $B$ we have 
\begin{equation*}
    \frac{N_{k}(t, B)}{\ell t} \xrightarrow{t \to \infty} \E{\frac{\alpha}{f(k,W) + \alpha} \prod_{s=0}^{k-1} \frac{f(s,W)}{f(s,W) + \alpha} \mathbf{1}_{B}(W)} = p^{\alpha}_{k}(B), \quad \text{in probability.}
\end{equation*}
\end{theorem}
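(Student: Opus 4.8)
The plan is to prove the statement by the first-and-second-moment method, working with a measure-valued refinement of the counts and inducting on $k$. For a bounded measurable $\psi \geq 0$, write $N_k(t)[\psi] := \sum_{v:\,\outdeg{v}=k\ell}\psi(W_v)$, so that $N_k(t,B)=N_k(t)[\mathbf 1_B]$. A one-step analysis of the dynamics gives, for $k\geq 1$, the conditional-expectation recursion
\begin{align*}
\E{N_k(t+1)[\psi]\mid\mathcal F_t} = N_k(t)[\psi] + \frac{1}{\cZ_t}\Big(N_{k-1}(t)[f(k-1,\cdot)\psi] - N_k(t)[f(k,\cdot)\psi]\Big),
\end{align*}
together with the analogous identity for $k=0$ carrying the extra term $\ell\,\E{\psi(W)}$ from the $\ell$ incoming leaves. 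The structural observation that drives everything is the a priori bound $N_k(t)[f(k,\cdot)\psi]\leq\|\psi\|_\infty\,\cZ_t$ (a partial sum of the terms defining $\cZ_t$), so every ratio $N_k(t)[f(k,\cdot)\psi]/\cZ_t$ is bounded by $\|\psi\|_\infty$.

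First I would establish the concentration estimate (Proposition~\ref{prop:conv-second-moment}): for each fixed $k$ and bounded $\psi$, $\mathrm{Var}(N_k(t)[\psi])=o(t^2)$, so that $N_k(t)[\psi]/(\ell t)$ has the same in-probability limit as its mean. I would derive this from a recursion for the variance: at each step only the selected vertex changes degree and $\ell$ leaves are added, so the one-step increment is $O(\ell\|\psi\|_\infty)$ and contributes $O(1)$ per step; the drift is stabilizing of order $1/t$ (through the $-f(k,\cdot)/\cZ_t$ term), which prevents these fluctuations from accumulating faster than linearly, and after the leading $O(t^2)$ contributions to $\E{N_k(t)[\psi]^2}$ and $\E{N_k(t)[\psi]}^2$ cancel one is left with $\mathrm{Var}(N_k(t)[\psi])=O(t)$, the cross terms being controlled by the bound above. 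Chebyshev's inequality then reduces the theorem to identifying $\lim_{t}\E{N_k(t)[\psi]}/(\ell t)$.

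Next I would compute this mean limit (Proposition~\ref{prop:conv-of-mean}) by induction on $k$. Writing $n_k(t)[\psi]:=\E{N_k(t)[\psi]}$ and noting that each functional $n_k(t)[\cdot]$ is absolutely continuous with respect to $\mu$ (by conditioning on the weight of a tagged vertex), I would first use Condition \textbf{C2}, the a.s.\ convergence $\ell t/\cZ_t\to\ell/\alpha$, the uniform bound, and the concentration just proved to replace $1/\cZ_t$ by $1/(\alpha t)$ inside the expectation, turning the recursion into the deterministic relation
\begin{align*}
n_k(t+1)[\psi] - n_k(t)[\psi] = \frac{1}{\alpha t}\Big(n_{k-1}(t)[f(k-1,\cdot)\psi] - n_k(t)[f(k,\cdot)\psi]\Big) + o(1).
\end{align*}
Disintegrating against $\mu$ turns this, for $\mu$-a.e.\ weight $w$, into a scalar linear recursion with stabilizing coefficient $f(k,w)/\alpha$; feeding in the inductive hypothesis $n_{k-1}(t)[\phi]/(\ell t)\to q_{k-1}[\phi]:=\E{\tfrac{\alpha}{f(k-1,W)+\alpha}\prod_{i=0}^{k-2}\tfrac{f(i,W)}{f(i,W)+\alpha}\phi(W)}$, a standard Cesàro/stochastic-approximation computation gives pointwise convergence of the density to $\tfrac{\alpha}{f(k,w)+\alpha}\prod_{i=0}^{k-1}\tfrac{f(i,w)}{f(i,w)+\alpha}$. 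Integrating against $\psi=\mathbf 1_B$ recovers $p_k(B)$; a direct check confirms this limit is the unique solution of the fixed-point equation $\alpha\,a[\psi]+a[f(k,\cdot)\psi]=q_{k-1}[f(k-1,\cdot)\psi]$ inherited from the recursion.

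The main obstacle, and the place where Condition \textbf{C2} does the real work, is the decoupling step: $\cZ_t$ is strongly correlated with the counts $N_k(t)[\cdot]$, so replacing it by $\alpha t$ inside the expectation is not automatic. What makes it go through is exactly the pairing of the a priori bound $N_k(t)[f(k,\cdot)\psi]\leq\|\psi\|_\infty\cZ_t$ (which keeps every ratio bounded) with the a.s.\ convergence of $\cZ_t/t$ and the concentration of the counts, so that dominated convergence applies; this is why the second-moment estimate must be in hand before the mean can be identified. A secondary subtlety is the self-coupling of level $k$ to itself through the test function $f(k,\cdot)\psi$, which I resolve by the disintegration against $\mu$ and the uniqueness of the pointwise fixed point, rather than by iterating the functional recursion.
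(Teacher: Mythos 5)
Your plan for the first moment is a genuinely different (and reasonable) route: the paper does not use the master-equation recursion at all, but instead expands $\E{N_k^\eta(t,B)}$ as a sum over birth times and jump-time configurations, bounds each path probability by an iterated conditioning argument on a good event supplied by Egorov's theorem (Lemma~\ref{lem:u}, Proposition~\ref{prop:upper_bounds}), evaluates the sums via Lemma~\ref{lem:prob-sum}, and then upgrades the resulting \emph{upper} bound to convergence by a mass-conservation contradiction (Lemma~\ref{lem:big-mass-dies} plus Fatou). Your recursion-plus-disintegration argument could plausibly replace this, though you are silently using two things the paper makes explicit: the induction at level $k-1$ must be applied to the test function $f(k-1,\cdot)\psi$, so you need $f(s,\cdot)$ bounded on the support of $\psi$ (the paper's family $\mathscr{F}$), and general measurable $B$ then requires the separate Lusin-type approximation step at the end of the paper's proof.

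The genuine gap is the concentration step. You assert that the variance recursion closes to give $\mathrm{Var}(N_k(t)[\psi])=O(t)$ because "the leading $O(t^2)$ contributions cancel," but the recursion you describe does not deliver this. Writing $v_t=\mathrm{Var}(N_k(t)[\psi])$ and using the conditional variance decomposition, the one-step conditional variance contributes $O(1)$, but the drift term $\cZ_t^{-1}\bigl(N_{k-1}(t)[f(k-1,\cdot)\psi]-N_k(t)[f(k,\cdot)\psi]\bigr)$ is merely \emph{bounded} (by $2\|\psi\|_\infty$, via your a priori bound) and is correlated with $N_k(t)[\psi]$ in a sign-indeterminate way; the Cauchy--Schwarz estimate of the cross term gives only $v_{t+1}\le v_t+C\sqrt{v_t}+C'$, whose solution is $O(t^2)$ --- exactly the trivial bound, useless for Chebyshev. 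To do better you must exploit that the drift is restoring, but it acts on the \emph{different} functional $N_k(t)[f(k,\cdot)\psi]$, is divided by the random $\cZ_t$ (for which Condition \textbf{C2} gives no rate), and receives a positively correlated input from level $k-1$; none of this is resolved by the heuristic as stated. The paper avoids this entirely by computing the second moment directly as a sum over \emph{pairs} of tagged vertices, $\E{(N_k^\eta(t,B))^2}=\sum_{i,j}\ell^2\,\Prob{d_i(t)=k,\,d_j(t)=k,\,W_i,W_j\in B}$, bounding each pair probability by a product of single-vertex factors (Proposition~\ref{prop:upper-bounds-sec}) and summing via Corollary~\ref{cor:summation} to get $\limsup_t \E{(N_k^\eta)^2}/(\ell t)^2\le p_k(B)^2$, which together with the first-moment limit forces the variance to vanish. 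You need either this two-point computation or a rigorous substitute for it; as written, your proof of the analogue of Proposition~\ref{prop:conv-second-moment} does not go through, and everything downstream of Chebyshev's inequality is unsupported.
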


In order to prove Theorem~\ref{th:general-convergence}, we define the following \hypertarget{family}{family of sets}:
\begin{equation} \label{eq:determining-class}
\mathscr{F} := \left\{B : B \text{ is measurable and } \forall s \in \N_0, \; f(s,w) \text{ is bounded for $w \in B$} \right\}.
\end{equation}
We also require Proposition~\ref{prop:conv-of-mean} and Proposition~\ref{prop:conv-second-moment}, proved in Section~\ref{subsub-prop} and Section~\ref{subsub-cor}. These proofs rely on the results stated in Section~\ref{subsec:summation} and Section~\ref{subsec:upper-bound}.  
\begin{prop} \label{prop:conv-of-mean}
For any set $B\in \hyperlink{family}{\mathscr{F}}$, for each $k \in \N_0$ we have 
\[\lim_{t \to \infty}
\frac{\E{N_{k}(t,B)}}{\ell t} = p^{\alpha}_{k}(B).\]
\end{prop}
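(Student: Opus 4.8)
The plan is to derive and then solve a deterministic recursion for the means $a_k(t,B) := \E{N_k(t,B)}$. Write $S_k(t,B) := \sum_{j:\,\outdeg{j,\cT_t}=k\ell,\,W_j\in B} f(k,W_j)$ for the total fitness carried by vertices of out-degree $k\ell$ with weight in $B$, and note the elementary bound $0\le S_k(t,B)\le \cZ_t$, since $S_k(t,B)$ is a partial sum of the terms defining $\cZ_t$. Conditioning on $\cT_t$ and using that at step $t+1$ a vertex of out-degree $k\ell$ in $B$ is selected with probability $S_k(t,B)/\cZ_t$, while $\ell$ fresh leaves (each in $B$ with probability $\mu(B)$) are created, I obtain
\[
a_k(t+1,B) = a_k(t,B) + \E{\frac{S_{k-1}(t,B)-S_k(t,B)}{\cZ_t}} + \ell\mu(B)\mathbf{1}_{k=0},
\]
with the convention $S_{-1}\equiv 0$.

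First I would treat the case where $f(s,\cdot)$ is constant, say $f(s,w)=c_s$ for $w\in B$ and all $s\le k$; then $S_s(t,B)=c_s N_s(t,B)$ and the recursion closes in the $a_s$. The key analytic input is to replace $\cZ_t$ by $\alpha t$: using Condition \hyperlink{c2}{\textbf{C2}} together with the deterministic bound $c_s N_s(t,B)/\cZ_t\le 1$ (so $N_s(t,B)/\cZ_t\le 1/c_s$ and $N_s(t,B)/(\alpha t)\le 2\ell/\alpha$ eventually), bounded convergence applied to $\frac{N_s(t,B)}{\cZ_t}\bigl(1-\frac{\cZ_t}{\alpha t}\bigr)$ gives $\E{N_s(t,B)/\cZ_t}-a_s(t,B)/(\alpha t)\to 0$; this is where the estimates of Subsection~\ref{subsec:upper-bound} enter. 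The recursion then takes the form
\[
a_k(t+1,B) = \Bigl(1-\tfrac{c_k}{\alpha t}\Bigr)a_k(t,B) + \tfrac{c_{k-1}}{\alpha t}\,a_{k-1}(t,B) + \ell\mu(B)\mathbf{1}_{k=0} + \epsilon_k(t),
\]
with $\epsilon_k(t)\to 0$. Applying the summation lemma of Subsection~\ref{subsec:summation} (that $x_{t+1}=(1-a_t/t)x_t+b_t$ with $a_t\to a>0$ and $b_t\to b$ forces $x_t/t\to b/(1+a)$) and inducting on $k$ yields $a_k(t,B)/(\ell t)\to p_k(B)$, since the resulting fixed-point relations $p_0(B)=\tfrac{\alpha}{\alpha+c_0}\mu(B)$ and $p_k(B)=\tfrac{c_{k-1}}{\alpha+c_k}p_{k-1}(B)$ reproduce exactly the product formula defining $p_k(B)$.

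For a general $B\in\mathscr{F}$ I would approximate. Since each $f(s,\cdot)$, $s\le k$, is bounded on $B$, for any $\delta>0$ I can partition $B$ into finitely many measurable pieces on each of which the vector $(f(0,\cdot),\dots,f(k,\cdot))$ varies by at most $\delta$. On such a piece one has $\underline{c}_s N_s\le S_s\le\overline{c}_s N_s$ with $\overline{c}_s-\underline{c}_s\le\delta$, so the exact recursion is sandwiched between two idealized constant-fitness recursions whose coefficients differ by $O(\delta)$. Feeding these into the summation lemma produces values of $\liminf_t$ and $\limsup_t$ of $a_k(t,B)/(\ell t)$ that both lie within $O(\delta)$ (with constant depending on $k,\alpha$ and the bound on $f$, but not on $\delta$) of $\sum_{\text{pieces}} p_k(B_i)=p_k(B)$; letting $\delta\to 0$ gives the claim.

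The main obstacle is precisely this interaction between the weight-dependence of the fitness and the counting: $N_k(t,B)$ evolves according to the weighted sums $S_k(t,B)$ rather than the counts themselves, so the recursion does not close by itself and must be closed through the constant-fitness reduction, while at the same time the random normalisation $\cZ_t$ must be traded for $\alpha t$ inside the mean. Controlling both errors uniformly — the $O(\delta)$ discretisation error from the partition and the $o(1)$ error supplied by Condition \hyperlink{c2}{\textbf{C2}} — and checking that the degenerate case $c_k=0$ (vertices that never grow past out-degree $k\ell$) simply makes the corresponding outflow term vanish, is the delicate part of the argument.
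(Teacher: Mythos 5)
Your recursion method is a correct route to Proposition~\ref{prop:conv-of-mean}, but it is genuinely different from the one in the paper. The paper never writes down the one-step recursion for the means; instead it proves only the upper bound $\limsup_{t}\E{N_k(t,B)}/\ell t\le p_k(B)$ directly, by fixing a vertex born at time $i$, conditioning along the configuration $\Ind_k$ of its jump times (Lemma~\ref{lem:u}), converting the resulting products into powers via Stirling (Lemma~\ref{lem:simple_stir}), and summing over configurations with the tuple-summation Lemma~\ref{lem:prob-sum} — the good event $\mathcal G_\eps$ from Egorov's theorem is what trades $\cZ_t$ for $\alpha t$ there. The matching lower bound is then obtained ``for free'' by mass conservation: since $\sum_k p_k(B)\le\mu(B)$, $N(t,B)/\ell t\to\mu(B)$ by the strong law, and Lemma~\ref{lem:big-mass-dies} kills the tail $N_{\ge M}$, a strict deficit in $\liminf_t\E{N_{k'}(t,B)}/\ell t$ for a single $k'$ contradicts Fatou's lemma. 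Your approach instead closes the recursion by discretising the fitness on $B$ and sandwiching between constant-coefficient recursions; this avoids the combinatorial sums entirely, at the cost of needing a comparison principle for the recursive \emph{inequalities} (a super/subsolution argument, which you should state) and the $\delta\to0$ limit. What it does not give you is the quantitative, $\eps$- and $\eta$-explicit upper bound of Corollary~\ref{cor:1}, which the paper reuses both for Theorem~\ref{th:deg-degs2} and as the template for the second-moment bound (Proposition~\ref{prop:upper-bounds-sec}); so if your proof were substituted in, those later arguments would still need the machinery of Subsection~\ref{subsec:upper-bound}.

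Two points to repair before this is watertight. First, the elementary recursion fact you invoke — $x_{t+1}=(1-a_t/t)x_t+b_t$ with $a_t\to a>-1$, $b_t\to b$ implies $x_t/t\to b/(1+a)$ — is \emph{not} the summation lemma of Subsection~\ref{subsec:summation}: Lemma~\ref{lem:prob-sum} concerns sums over ordered index tuples and is unrelated. The fact you need is standard but must be stated and proved (or cited) separately. Second, your domination $N_s(t,B)/\cZ_t\le 1/c_s$ degenerates on pieces of the partition where $\underline{c}_s$ is zero or small, precisely the regime your $\delta\to0$ limit must pass through; the fix is to run the bounded-convergence argument on $S_s(t,B)/\cZ_t\le 1$ itself (so the error term is $\E{\frac{S_s}{\cZ_t}\bigl(1-\frac{\cZ_t}{\alpha t}\bigr)}$, dominated by $1+2\overline{c}_s\ell/\alpha$ for large $t$) and only afterwards sandwich $\E{S_s(t,B_i)}$ between $\underline{c}_s\,\E{N_s(t,B_i)}$ and $\overline{c}_s\,\E{N_s(t,B_i)}$. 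With those adjustments the argument goes through.
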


\begin{prop} \label{prop:conv-second-moment}
For any $B \in \hyperlink{family}{\mathscr{F}}$ and $k \in \N_0$ we have \[\lim_{t \to \infty} \E{\frac{\left(N_{k}(t, B)\right)^2}{\ell^2 t^2}} = (p^{\alpha}_{k}(B))^2.\]
\end{prop}

\begin{proof}[Proof of Theorem~\ref{th:general-convergence}]
The result follows for all $B \in \hyperlink{family}{\mathscr{F}}$ by combining Proposition~\ref{prop:conv-of-mean}, Proposition~\ref{prop:conv-second-moment} and applying Chebyshev's inequality. 

Now, let $B$ be an arbitrary measurable set and let $\eps > 0$ be given. Then, since for each $s \in \left\{1, \ldots, k\right\}$ the map $w \mapsto f(s,w)$ is measurable, by Lusin's theorem, we can find a compact set $E \subseteq B$ such that $\mu(B \cap E^{c}) < \eps/3$ and for each $s \in \left\{1, \ldots, k\right\}$ the restriction of the map $w \mapsto f(s,w)$ to $E$ is continuous. Moreover, note that $p^{\alpha}_{k}(B) - p^{\alpha}_{k}(B \cap E) \leq \mu(B \cap E^{c}) < \eps/3$.
Then,
\redd{\begin{linenomath*}
\begin{align} \label{eq:lusin-plus-union-bound}
\nonumber \Prob{\left|\frac{N_{k}(t,B)}{\ell t} - p^{\alpha}_{k}(B)\right| > \eps}
& \leq \mathbb P \bigg(\bigg(\left|\frac{N_{k}(t,B)}{\ell t} - \frac{N_{k}(t,B\cap E)}{\ell t}\right| + \left|\frac{N_{k}(t,B\cap E)}{\ell t} - p^{\alpha}_{k}(B\cap E)\right|  \\ \nonumber & \hspace{6cm}  + \left|p^{\alpha}_{k}(B \cap E) - p^{\alpha}_{k}(B)\right|\bigg) > \eps\bigg) \\ \nonumber & \leq \Prob{\left|\frac{N_{k}(t,B\cap E)}{\ell t} - p^{\alpha}_{k}(B\cap E)\right| > \eps/3}
\\ & \hspace{4.5cm} + \Prob{\left|\frac{N_{k}(t, B)}{\ell t} - \frac{N_{k}(t, B \cap E)}{\ell t}\right| > \eps/3}. 
\end{align}
\end{linenomath*}}
Now, note that by the strong law of large numbers \bluee{applied to $N_{\geq 0}(t,B \cap E^{c})/\ell t$, i.e., the proportion of all vertices with weight belonging to $B \cap E^{c}$,} and Egorov's theorem, for any $\delta > 0$ there exists an event $G$ with $\mathbb{P}(G) < \delta$ such that 
\[\limsup_{t \to \infty} \left(\frac{N_{k}(t, B)}{\ell t} - \frac{N_{k}(t, B \cap E)}{\ell t}\right) = \limsup_{t \to \infty} \frac{N_{k}(t, B \cap E^{c})}{\ell t} \leq \mu(B \cap E^{c})\]
uniformly on the complement of $G$. Therefore, the result follows from~\eqref{eq:lusin-plus-union-bound}, Proposition~\ref{prop:conv-of-mean} and Proposition~\ref{prop:conv-second-moment} by taking limits as $t$ tends to infinity. 
\end{proof}

Using the approach to the upper bound for the mean in the next subsection, and applying Corollary~\ref{cor:summation} stated below with $k=1$ and $e_0, e_1 = 0$, if $N_{\geq 1}(t, B)$ denotes the number of vertices of out-degree at least $1$ in the tree with weight belonging to $B$, we actually have \[\limsup_{t \to \infty} \frac{\E{N_{\geq 1}(t, B)}}{\ell t} \leq \frac{1}{\alpha'}\E{f(0, W) \mathbf{1}_{B}(W)},\]
as long as $\liminf_{t \to \infty} \frac{\cZ_{t}}{t} \geq \alpha'$. By sending $\alpha'$ to infinity, this yields the following analogue of Theorem~\ref{th:conn-transitions}:
\begin{theorem} \label{th:deg-degs2}
Suppose $\cT$ is a \rif{\mu, f, \ell} such that $\lim_{t \to \infty} \frac{\cZ_{t}}{t} = \infty$. Then for any measurable set $B \subseteq [0,\infty)$, we have
\[\frac{N_0(t, B)}{\ell t} \xrightarrow{t\to \infty}  \mu(B), \quad \text{in probability}.\]
\end{theorem}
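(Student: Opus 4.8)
The plan is to split the leaf count according to $N_{0}(t,B) = N_{\geq 0}(t,B) - N_{\geq 1}(t,B)$, where $N_{\geq j}(t,B)$ denotes the number of vertices of $\mathcal{T}_{t}$ with out-degree at least $j\ell$ and weight in $B$, and to treat the two terms by different mechanisms. Since the weights of the $1 + \ell t$ vertices of $\mathcal{T}_{t}$ are i.i.d. samples from $\mu$, Kolmogorov's strong law of large numbers gives $N_{\geq 0}(t,B)/(\ell t) \to \mu(B)$ almost surely for every measurable $B$; as $N_{0} \leq N_{\geq 0}$ this already yields $\limsup_{t \to \infty} N_{0}(t,B)/(\ell t) \leq \mu(B)$ almost surely. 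The whole statement therefore reduces to the matching lower bound, i.e. to proving that $N_{\geq 1}(t,B)/(\ell t) \to 0$ almost surely.

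I would first establish this for sets $B$ on which $w \mapsto f(0,w)$ is bounded, say by $M$. On the level of means, Corollary~\ref{cor:1} with $k=1$ gives $\limsup_{t} \E{N_{\geq 1}(t,B)}/(\ell t) \leq \E{\frac{f(0,W)}{f(0,W)+\alpha'}\mathbf{1}_{B}}$ for every $\alpha' > 0$, because \hyperlink{c2}{\textbf{C2}} with $\alpha = \infty$ forces $\liminf_{t} \cZ_{t}/t \geq \alpha'$ for all $\alpha'$; letting $\alpha' \to \infty$ and applying dominated convergence makes the right-hand side vanish, so $\E{N_{\geq 1}(t,B)}/(\ell t) \to 0$. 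To upgrade this to an almost sure statement I would exploit the increment structure: a vertex of weight in $B$ enters the count $N_{\geq 1}(\cdot,B)$ exactly at the step when it is first selected, so $N_{\geq 1}(t,B) = \sum_{s=1}^{t} \Delta_{s}$, where $\Delta_{s}$ is the indicator that the parent chosen at step $s$ was previously a leaf with weight in $B$. Its predictable compensator satisfies $q_{s-1} := \E{\Delta_{s} \mid \mathcal{F}_{s-1}} \leq M\,(1 + \ell(s-1))/\cZ_{s-1}$, which tends to $0$ almost surely because $\cZ_{s}/s \to \infty$, so the Ces\`aro averages $t^{-1}\sum_{s \leq t} q_{s-1}$ tend to $0$ almost surely. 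The martingale $\sum_{s=1}^{t}(\Delta_{s} - q_{s-1})$ has increments bounded by $1$, whence $\sum_{s}(\Delta_{s}-q_{s-1})/s$ converges almost surely by $L^{2}$-boundedness, and Kronecker's lemma gives $t^{-1}\sum_{s \leq t}(\Delta_{s}-q_{s-1}) \to 0$ almost surely. Adding the two pieces shows $N_{\geq 1}(t,B)/(\ell t) \to 0$ almost surely for such $B$.

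Finally I would remove the boundedness assumption by inner approximation: for general measurable $B$ set $B_{M} := B \cap \{w : f(0,w) \leq M\}$, so that $\mu(B_{M}) \uparrow \mu(B)$ as $M \to \infty$ (as $f(0,\cdot)$ is everywhere finite). Monotonicity $N_{0}(t,B) \geq N_{0}(t,B_{M})$ and the case already proved give $\liminf_{t} N_{0}(t,B)/(\ell t) \geq \mu(B_{M})$ almost surely, and letting $M \to \infty$ along the integers (so that countably many almost sure events intersect) yields the required lower bound $\liminf_{t} N_{0}(t,B)/(\ell t) \geq \mu(B)$. The main obstacle is exactly the passage from the mean bound of Corollary~\ref{cor:1} to almost sure convergence: a non-decreasing sequence with mean $o(t)$ need not be $o(t)$ almost surely, so monotonicity alone does not suffice and one genuinely needs the martingale decomposition, whose compensator is controlled only through the hypothesis $\cZ_{t}/t \to \infty$; a secondary technical point is that the bound on $q_{s-1}$ requires $f(0,\cdot)$ bounded on $B$, which is what forces the inner-approximation step.
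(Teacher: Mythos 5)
Your proof is correct, but it goes further than the paper does. The paper only sketches this result: it observes that the argument behind Corollary~\ref{cor:1} gives $\limsup_{t}\E{N_{k}(t,B)}/\ell t \leq \E{\prod_{s=0}^{k-1}\tfrac{f(s,W)}{f(s,W)+\alpha'}\mathbf{1}_{B}}$ whenever $\liminf_{t}\cZ_{t}/t\geq\alpha'$, sends $\alpha'\to\infty$, and states the theorem ``leaving the details to the reader.'' That route controls only the mean (hence, via Markov's inequality, gives convergence in probability), and it runs through the heavy machinery of Proposition~\ref{prop:upper_bounds}, Egorov's theorem and the summation lemmas. You correctly identify that a non-negative, non-decreasing count with mean $o(t)$ need not be $o(t)$ almost surely, and your increment decomposition $N_{\geq 1}(t,B)=\sum_{s\leq t}\Delta_{s}$ with compensator $q_{s-1}\leq M(1+\ell(s-1))/\cZ_{s-1}\to 0$, followed by the $L^{2}$-bounded martingale $\sum_{s}(\Delta_{s}-q_{s-1})/s$ and Kronecker's lemma, is exactly the missing ingredient that upgrades the statement to the almost sure convergence actually claimed; the inner approximation by $B_{M}=B\cap\{f(0,\cdot)\leq M\}$ then removes the boundedness restriction cleanly. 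So your argument is both more elementary (it bypasses Corollary~\ref{cor:1} entirely) and strictly stronger than what the paper's sketch delivers. One cosmetic remark: the total vertex count at time $t$ is $1+\ell t$, so your normalisation $N_{0}(t,B)/\ell t\to\mu(B)$ is the consistent one (matching Theorem~\ref{th:general-convergence}); the statement's $N_{0}(t,B)/t$ agrees with it only when $\ell=1$.
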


\subsection{Summation Arguments} \label{subsec:summation}
Here we state some summation arguments required for the subsequent proofs. The following lemma and corollary are taken from \cite{dynamical-simplices}. We include them here, with minor changes in notation, for completeness. 
For $e_0, \ldots, e_k \geq 0, 0 \leq \eta < 1$, let
\[
\mathcal{S}_t(e_0, \ldots, e_k, \eta) := \frac{1}{t} \sum_{\eta t < i_0 < \cdots < i_k \leq t} \prod_{s=0}^{k-1}\left( \left(\frac{i_s}{i_{s+1}} \right)^{e_s} \cdot \frac{1}{i_{s+1} -1} \right) \left(\frac{i_k}{t} \right)^{e_k}.  
\]
\begin{lemma}[{\cite[Lemma~4]{dynamical-simplices}}] \label{lem:prob-sum}
Uniformly in $e_0, \ldots, e_k \geq 0$, $0 \leq \eta \leq 1/2$, we have
\[
\mathcal{S}_t(e_0, \ldots, e_k, \eta) = \prod_{s=0}^{k} \frac{1}{e_s + 1} + \theta(\eta) + O \left(\frac{1}{t^{1/(k+2)}} + \frac{\sum_{s =0}^k e_s \log^{k+1}(t) }{t} \right).
\]
Here, $\theta(\eta)$ is a term satisfying $ |\theta(\eta)| \leq M\eta^{1/(k+2)}$ for some universal constant $M$ depending only on $k$.
\end{lemma}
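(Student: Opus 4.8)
The plan is to recognise $\mathcal{S}_n$ as a Riemann sum for a multiple integral over the ordered simplex, to compute that integral explicitly, and then to control the discretisation and truncation errors uniformly in the exponents. Substituting $x_s = i_s/n$, so that $i_s/i_{s+1} = x_s/x_{s+1}$, $(i_k/n)^{e_k} = x_k^{e_k}$, and $\frac{1}{i_{s+1}-1} = \frac{1}{n x_{s+1}}(1 + O(1/i_{s+1}))$, each of the $k+1$ summations contributes a factor $\approx n \int \dd x_s$, cancelling the $k+1$ powers of $1/n$ coming from the prefactor and the $k$ factors $\frac{1}{i_{s+1}-1}$. Thus I expect
\[
\mathcal{S}_n(e_0,\ldots,e_k,\eta) \approx \int_{\eta < x_0 < \cdots < x_k < 1} \prod_{s=0}^{k-1}\left(\frac{x_s}{x_{s+1}}\right)^{e_s}\frac{1}{x_{s+1}}\, x_k^{e_k}\, \dd x_0\cdots \dd x_k.
\]

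The integral is computed exactly by iterated integration from the innermost variable. Integrating over $x_0 \in (0, x_1)$, the identity $\int_0^{x_1}\left(\frac{x_0}{x_1}\right)^{e_0}\frac{\dd x_0}{x_1} = \frac{1}{e_0+1}$ removes $x_0$ and, crucially, leaves a product of exactly the same shape in $x_1,\ldots,x_k$ with exponents $e_1,\ldots,e_k$; iterating over $x_1,\ldots,x_{k-1}$ and finishing with $\int_0^1 x_k^{e_k}\,\dd x_k = \frac{1}{e_k+1}$ telescopes to the main term $\prod_{s=0}^{k}\frac{1}{e_s+1}$ (taken at $\eta = 0$). The effect of the lower cutoff, namely the difference between integrating from $\eta$ and from $0$, is collected into $\theta(\eta)$; a direct estimate of the boundary layer $\{x_0 < \eta\}$, balanced against the factors $\frac{1}{x_{s+1}}$, should give $|\theta(\eta)| \le M\eta^{1/(k+2)}$.

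For the discretisation error I would induct on $k$, peeling off the innermost sum over $i_0$. With $i_1,\ldots,i_k$ fixed one has $\sum_{\eta n < i_0 < i_1}(i_0/i_1)^{e_0} = i_1^{-e_0}\sum i_0^{e_0}$, and since $x\mapsto x^{e_0}$ is monotone the sum differs from $\int_{\eta n}^{i_1} x^{e_0}\,\dd x$ by at most its total variation $i_1^{e_0}$, a bound uniform in $e_0$. This reduces the $(k+1)$-fold sum to a $k$-fold sum of the same type (after replacing $\frac{1}{i_{s+1}-1}$ by $\frac{1}{i_{s+1}}$ at a cost controlled by $\sum_i \frac{1}{i^2}$), to which the inductive hypothesis applies. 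Tracking how errors propagate across the $k+1$ levels, the harmonic sums $\sum_i \frac1i \sim \log n$ produce the $\log^{k+1}(n)$ factor, the boundary terms from summation by parts produce the factor linear in $\sum_j e_j$, and optimising the width of a boundary layer near the diagonal $i_s \approx i_{s+1}$ against the bulk pins down the rate $n^{-1/(k+2)}$.

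The main obstacle is uniformity in the exponents $e_0,\ldots,e_k$, which range over all of $[0,\infty)$. When some $e_s$ is large the factor $(i_s/i_{s+1})^{e_s}$ concentrates sharply near the diagonal $i_s \approx i_{s+1}$, precisely where replacing a sum by an integral is least accurate and where the factor $\frac{1}{i_{s+1}-1}$ can also blow up for small $i_{s+1}$. Showing that these near-diagonal contributions are nonetheless controlled by the stated error, uniformly in all $e_j$ simultaneously, is the delicate step, and it is the balancing of the near-diagonal region against the bulk that produces the common exponent $1/(k+2)$ in both the $n^{-1/(k+2)}$ and $\eta^{1/(k+2)}$ terms.
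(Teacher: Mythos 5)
Your route is genuinely different from the paper's. The paper (following the cited source) rewrites $\mathcal S_n$ as $\frac{1}{(k+1)!}\E{\prod_{j}(I_j/I_{j+1})^{e_j}\frac{n}{I_{j+1}-1}(I_k/n)^{e_k}\mathbf 1_{A_n}\mathbf 1_{I_0>\eta n}}$ with $I_j=\lceil U_{(j)}n\rceil$ the order statistics of $k+1$ uniforms, evaluates the limit exactly via the product representation $U_{(i)}\overset{d}{=}\prod_{j\ge i}U_j^{1/(j+1)}$, and controls all boundary effects (the cutoff $\{I_0\le\eta n\}$, the coincidence event $A_n^c$) by H\"older's inequality with conjugate exponents $\tfrac{k+2}{k+1}$ and $k+2$; the discretisation errors are handled by the mean value theorem, producing the $\sum_j e_j\log^{k+1}(n)/n$ term. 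Your Riemann-sum-plus-induction scheme computes the same main term by the same telescoping identity and is viable in principle, and your account of where the $\sum_j e_j$ and $\log^{k+1}n$ factors come from is essentially right.

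However, the quantitative heart of the lemma is left at the level of assertion, and in two places the sketch as written would not go through. First, your diagnosis of the hard region is misplaced: the factors $(i_s/i_{s+1})^{e_s}\le 1$ only help, and the monotone-function bound $|\sum f(i)-\int f|\le\max f$ is already uniform in $e_s$, so the near-diagonal concentration for large $e_s$ is not the obstruction. The delicate region is where $i_1,\dots$ are small and $\prod_j\frac{1}{i_{j+1}-1}$ is large; this is what fixes the exponent $1/(k+2)$ (in the paper it appears as the H\"older conjugate exponent dictated by the integrability of $\prod_{j=1}^{k}U_{(j)}^{-p}$, which requires $p<2$ because of $U_{(1)}$), not an "optimisation of a boundary-layer width near the diagonal." Second, your "direct estimate" of the $\eta$-cutoff is not direct: after summing out $i_0$, the correction carries a factor $(\eta n/i_1)^{e_0+1}$, and the naive bound $(\eta n/i_1)^{e_0+1}\le\eta n/i_1$ followed by term-by-term summation yields $O(\eta\log^{k}n)$, which is \emph{not} of the admissible form $\theta(\eta)+O(n^{-1/(k+2)}+\cdots)$ with $|\theta(\eta)|\le M\eta^{1/(k+2)}$ uniformly in $n$. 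One must instead interpolate $(\eta n/i_1)^{e_0+1}\le(\eta n/i_1)^{a}$ for a fixed $a<1$ and iterate the bound $\sum_{i<j}i^{-a}\le j^{1-a}/(1-a)$ through all $k$ remaining sums to absorb the would-be logarithms; only then does one land on $O(\eta^{a})$ with a constant depending only on $k$. Your induction also silently changes the form of the summand at each step (the factors $\frac{i_1}{i_1-1}$ and $1-(\eta n/i_1)^{e_0+1}$ appear), so "a $k$-fold sum of the same type" needs to be stated and tracked carefully. These gaps are fixable, but as written the proof of the error bounds — which is the entire content of the lemma beyond the identification of the limit — is missing.
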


 \begin{cor}[{\cite[Corollary~5]{dynamical-simplices}}] \label{cor:summation}
For $e_0, \ldots, e_k, f_0, \ldots, f_{k-1} \geq 0$, $0 \leq \eta \leq 1/2$, we have
 \begin{linenomath*}
 \begin{align*}
&\frac 1 t \sum_{\eta t < i_0\leq t} \sum_{\Ind_k \in {\left\{i_0 +1, \ldots, t\right\} \choose k} }  \prod_{s=0}^{k-1}\left( \left(\frac{i_s}{i_{s+1}} \right)^{e_s} \cdot \frac{f_{s}}{i_{s+1} -1} \right) \left(\frac{i_k}{t} \right)^{e_k}\\
& \hspace {2cm} = \frac{1}{e_k +1}\prod_{s=0}^{k-1} \frac{f_{s}}{e_{s}+1}  +  \theta'(\eta) + O \left(\frac{1}{t^{1/(k+2)}}\right).
\end{align*}
\end{linenomath*}
Here, $\theta'(\eta)$ is a term satisfying $ |\theta'(\eta)| \leq M'\eta^{1/(k+2)}$ for some universal constant $M'$ depending only on $k$ and $f_0, \ldots, f_{k-1}$, and the constant in the big $O$-term may depend on $e_0, \ldots, e_{k}, f_0, \ldots, f_k$. 
\end{cor}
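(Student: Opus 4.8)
The plan is to obtain Corollary~\ref{cor:summation} directly from Lemma~\ref{lem:prob-sum}, observing that up to the multiplicative constants $f_0, \dots, f_{k-1}$ the left-hand side is exactly the quantity $\mathcal{S}_n(e_0, \dots, e_k, \eta)$. First I would rewrite the double summation as a single sum over strictly increasing tuples. Setting $i_0 := i$, the outer sum $\frac1n \sum_{\eta n < i \leq n}$ together with the inner sum over $\Ind_k = (i_1, \dots, i_k) \in \binom{[i+1\, . \, . \, n]}{k}$ ranges precisely over $\{\eta n < i_0 < i_1 < \cdots < i_k \leq n\}$, which is the index set appearing in the definition of $\mathcal{S}_n$. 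Since the $f_0, \dots, f_{k-1}$ do not depend on the summation indices (the intended numerator of the inner factor is $\frac{f_s}{i_{s+1}-1}$, with $f_s$ playing the role of the constant $1$ in $\mathcal{S}_n$), I would then pull the factor $\prod_{s=0}^{k-1} f_s$ out of both the product and the sum, yielding the identity
\[
\frac1n \sum_{\eta n < i \leq n} \sum_{\Ind_k \in \binom{[i+1\, . \, . \, n]}{k}} \prod_{s=0}^{k-1}\left(\left(\frac{i_s}{i_{s+1}}\right)^{e_s} \cdot \frac{f_s}{i_{s+1}-1}\right)\left(\frac{i_k}{n}\right)^{e_k} = \left(\prod_{s=0}^{k-1} f_s\right) \mathcal{S}_n(e_0, \dots, e_k, \eta).
\]

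Next I would apply Lemma~\ref{lem:prob-sum} to the right-hand factor. Multiplying its conclusion by $\prod_{s=0}^{k-1} f_s$ produces the leading term
\[
\left(\prod_{s=0}^{k-1} f_s\right)\prod_{s=0}^{k}\frac{1}{e_s+1} = \frac{1}{e_k+1}\prod_{j=0}^{k-1}\frac{f_j}{e_j+1},
\]
which is exactly the claimed main term; the rearrangement (splitting off the $s=k$ factor and pairing each $f_s$ with $\frac{1}{e_s+1}$) is the only computation and is a one-line regrouping. The remainder then splits as $\theta'(\eta) = \left(\prod_{s=0}^{k-1} f_s\right)\theta(\eta)$, so that $|\theta'(\eta)| \leq M'\eta^{1/(k+2)}$ with $M' = M\prod_{s=0}^{k-1} f_s$ depending only on $k$ and $f_0, \dots, f_{k-1}$, precisely as stated; the big-$O$ error term is multiplied by the same constant and absorbed into the implicit constant.

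There is no genuine obstacle: the corollary is simply a reweighting of Lemma~\ref{lem:prob-sum} by index-independent constants, and the entire content is the bijective reindexing of the double sum plus the extraction of $\prod_{s=0}^{k-1} f_s$. The only point demanding care is the bookkeeping of uniformity — after multiplication, the constant $M'$ and the implicit $O$-constant legitimately acquire a dependence on the $f_j$, which is why the statement asserts uniformity in the $e_j$ while explicitly allowing $M'$ to depend on $f_0, \dots, f_{k-1}$ and $k$. Once the numerator of the inner factor is read as $f_s$ rather than $e_s$, the identification with $\mathcal{S}_n$ is immediate and the proof reduces to this single substitution.
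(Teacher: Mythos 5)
Your proposal is correct: the paper itself gives no proof of this corollary (it is imported verbatim from the cited reference \cite{dynamical-simplices}), and the intended derivation is exactly the one you give — identify the double sum over $i$ and $\Ind_k$ with the index set of $\mathcal{S}_n(e_0,\dots,e_k,\eta)$ via $i_0:=i$, pull out the index-independent factor $\prod_{s=0}^{k-1} f_s$, and multiply the conclusion of Lemma~\ref{lem:prob-sum} through, letting $\theta'(\eta)$ and the $O$-constant absorb that factor. You also correctly diagnose that the $e_s$ appearing in the numerator $\frac{e_s}{i_{s+1}-1}$ on the left-hand side is a typo for $f_s$ (as the right-hand side and the dependence of $M'$ on $f_0,\dots,f_{k-1}$ make clear), and you rightly flag that the resulting uniformity is in the $e_j$ and $\eta$ only, with the constants legitimately depending on the bounded quantities $f_0,\dots,f_{k-1}$.
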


\subsection{Upper bound for the Mean of \texorpdfstring{$N_{k}(t,B)/\ell t$}{}} \label{subsec:upper-bound}
In the following subsections, unless otherwise specified, we let $B$ denote an arbitrary element of the family $\hyperlink{family}{\mathscr{F}}$ defined in \eqref{eq:determining-class}.
Let $N_{\eta, k}(t,B)$ be the number of vertices of degree $k \ell$ with weight in $B$ that arrived after time $\eta t$. Then, since $N_{\eta, k}(t,B) \leq N_{k}(t,B) \leq N_{\eta, k}(t,B) + \eta \ell t$, we have
\begin{equation} \label{eq:eta-bound}
\E{\left|\frac{N_{\eta, k}(t, B)}{\ell t} - \frac{N_{k}(t, B)}{\ell t}\right|} \leq \eta.
\end{equation}
Thus, to obtain an upper bound for the convergence of the mean, it suffices to prove that \[\limsup_{\eta \to 0} \limsup_{t \to \infty} \E{\frac{N_{\eta, k}(t, B)}{\ell t}} = p^{\alpha}_{k}(B).\] 
In what follows, we use the notation $d_{i}(t)$ to denote the out-degree  at time $t$ of the vertex $i$ born at time $i_0 := \lfloor i/\ell \rfloor$.
We then have,
\[\E{N_{\eta, k}(t,B)} = \sum_{\eta t < i_{0} \leq t - k} \ell \cdot \Prob{d_{i}(t) = k, W_i \in B}, \]
since the probability is identical for each of the $\ell$ vertices born at each time $i_0$. 
In what follows, for a given $i$ we denote by $\mathcal{I}_{k} : = \{i_1, \ldots, i_k\}$ a collection of natural numbers $i_0 < i_1 < \ldots < i_k \leq t$. For ease of notation we exclude the dependence of $\mathcal{I}_{k}$ on $i$. 

For a natural number $s > i_0$, we use the notation $i \bluee{\rightarrow} s$ to denote that $i$ is the vertex chosen at the $s$th time-step, hence $i$ gains $\ell$ new neighbours at time $s$. Likewise, the notation $i \not \bluee{\rightarrow} s$ denotes that $i$ is not chosen at the $s$th time-step. 
Then, let $\cE_{i}(\Ind_k, B)$ denote the event that $W_i \in B$ and for all $s \in \left\{i_0 +1, \ldots, t\right\}$, $i \bluee{\rightarrow} s$ if and only if $s \in \Ind_k$. Clearly, we have 
\[ 
\Prob{d_{i}(t) = k, W_i \in B} = \sum_{\Ind_k \in {\left\{i_0 +1, \ldots, t\right\} \choose k}} \Prob{\cE_{i}(\Ind_k, B)}.
\]
where ${\left\{i_0 +1, \ldots, t\right\} \choose k}$ denotes the set of all subsets of $\left\{i_0 +1, \ldots, t\right\}$ of size $k$.
For $\eps > 0$ and $t \geq 0$ and natural numbers $N_1 \leq N_2$, we let 
\begin{equation} \label{def:a}
\mathcal G_\eps (t)= \left\{ \left| \mathcal{Z}_t- \alpha t \right| < \eps \alpha t \right\}, \text{ and } \mathcal G_\eps(N_1, N_2) = \bigcap_{t=N_1}^{N_2}\mathcal G_\eps(t). 
\end{equation}
Moreover, for $t \geq 1$, we denote by  $\mathscr T_{t}$ the $\sigma$-field generated by $(\cT_s)_{1 \leq s \leq t}$, containing all the information generated by the process up to time $t$. By the assumption of almost sure convergence and Egorov's theorem, for any $\delta, \eps > 0$, there exists $N' = N'(\eps, \delta)$ such that, for all $t \geq N'$,  $\Prob{\mathcal G_\varepsilon(N', t)} \geq 1- \delta$. Thus, for $t \geq N'/\eta$, we have
\begin{linenomath*}
\begin{align} \label{eq:exp-approx}
& \E{N_{\eta, k}(t,B)} \leq \E{N_{\eta, k}(t,B) \mathbf{1}_{\mathcal{G}_{\eps}(N',t)}} + \ell t\left(1-\Prob{\mathcal G_\varepsilon(N', t)} \right) \\ 
& \hspace{3cm} \nonumber \leq \ell \left(\sum_{\eta t < i_0 \leq t} \sum_{\Ind_k \in {\left\{i_0 +1, \ldots, t\right\} \choose k}} \Prob{\cE_{i}(\Ind_k, B) \cap \mathcal{G}_{\eps}(i_0,t)} + \delta t \right). 
\end{align}
\end{linenomath*}
We use the shorthand $\alpha_{\pm \eps} := (1 \pm \eps) \alpha$. 

\begin{prop} \label{prop:upper_bounds}
Let $B \in \hyperlink{family}{\mathscr{F}}$ and $0 < \varepsilon, \eta \leq 1/2$. As $t \to \infty$, uniformly in $\eta t < i_0 \leq t -k, \Ind_k = \{i_1, \ldots, i_k\}\in{\left\{i_0 +1, \ldots, t\right\} \choose k}$ and the choice of $\varepsilon$, we have
\begin{linenomath*}
\begin{align*}
& \Prob{\cE_{i}(\Ind_k, B) \cap \mathcal{G}_{\eps}(i_0,t)}
\\ & \hspace{2cm} \leq \left(1 + O(1/t) \right) \E{\left(\frac{i_{k}}{t}\right)^{f(k,W)/\alpha_{+\eps}} \prod_{s=0}^{k-1} \left( \frac{i_{s}}{i_{s +1}}\right)^{f(s, W)/\alpha_{+\eps}} \frac{f(s,W)}{\alpha_{-\eps}(i_{s+1} - 1)} \mathbf{1}_{B}(W)}. 
\end{align*}
\end{linenomath*}
\end{prop}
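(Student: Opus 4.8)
The plan is to write $\Prob{\cE_i(\Ind_k,B)}$ exactly as an expectation of a product of per-step conditional selection probabilities, then exploit the partition-function bounds available on $\mathcal{G}_\eps(i,t)$ to replace each random $\cZ_{u-1}$ by a deterministic multiple of $u-1$, and finally recognise the resulting products as the claimed power-law factors via $1-x\le e^{-x}$ together with harmonic-sum asymptotics.

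First I would set $i_0 := i$ and observe that on $\cE_i(\Ind_k,B)$ the out-degree of $i$ equals $s\ell$ throughout the window $(i_s,i_{s+1}]$, so the fitness of $i$ at the decision made at step $u$ is the deterministic value $f(s,W_i)$ with $s=|\{j:i_j<u\}|$. Writing $X_u=\mathbf{1}_{\{i\sim u\}}$ and recalling that $\Prob{X_u=1\mid\mathscr{T}_{u-1}}=f(s,W_i)/\cZ_{u-1}$, with $\cZ_{u-1}$ being $\mathscr{T}_{u-1}$-measurable, an iterated application of the tower property (peeling steps from $u=t$ down to $u=i+1$) gives
\[
\Prob{\cE_i(\Ind_k,B)}=\E{\mathbf{1}_{\{W_i\in B\}}\prod_{s=0}^{k-1}\left[\frac{f(s,W_i)}{\cZ_{i_{s+1}-1}}\prod_{u=i_s+1}^{i_{s+1}-1}\left(1-\frac{f(s,W_i)}{\cZ_{u-1}}\right)\right]\prod_{u=i_k+1}^{t}\left(1-\frac{f(k,W_i)}{\cZ_{u-1}}\right)}.
\]

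To intersect with $\mathcal{G}_\eps(i,t)$ I would carry the indicator $\mathbf{1}_{\mathcal{G}_\eps(i,t)}$ through the same peeling. The key point is that $\mathbf{1}_{\mathcal{G}_\eps(i,t)}\le\mathbf{1}_{\mathcal{G}_\eps(i,u-1)}$, so at the step producing the time-$u$ factor the event $\mathcal{G}_\eps(u-1)$ is already available and forces $\alpha_{-\eps}(u-1)\le\cZ_{u-1}\le\alpha_{+\eps}(u-1)$. Using the upper bound on $\cZ_{u-1}$ in the not-selected factors (which are increasing in $\cZ_{u-1}$) and the lower bound in the $k$ selected factors, then discarding the leftover indicator, bounds the expression above by the deterministic product with $\cZ_{u-1}$ replaced by $\alpha_{+\eps}(u-1)$, respectively $\alpha_{-\eps}(u-1)$. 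For each phase $s$ I would then use $\prod_{u}(1-\tfrac{f(s,W_i)}{\alpha_{+\eps}(u-1)})\le\exp(-\tfrac{f(s,W_i)}{\alpha_{+\eps}}\sum_{u}\tfrac{1}{u-1})$ and evaluate the harmonic sum as $\log(i_{s+1}/i_s)+O(1/i_s)$, producing exactly the factor $(i_s/i_{s+1})^{f(s,W_i)/\alpha_{+\eps}}$; the $k$ selection factors contribute the terms $f(s,W_i)/(\alpha_{-\eps}(i_{s+1}-1))$. Since $B\in\mathscr{F}$ makes each $f(s,\cdot)$ bounded on $B$ and $i_s\ge i\ge\eta n$, all exponents and harmonic-sum remainders are bounded uniformly in $\eps$ and in the admissible $i,\Ind_k$, so collecting the $k+1$ multiplicative errors $(1+O(1/i_s))$ yields the global prefactor $1+O(1/t)$. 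Bounding the selection probabilities crudely by $1$ then recovers the displayed inequality, while keeping them is what later feeds Corollary~\ref{cor:summation}.

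The main obstacle is the non-adaptedness of $\mathbf{1}_{\mathcal{G}_\eps(i,t)}$: because it constrains $\cZ_u$ at all future times it cannot be factored out of the step-by-step conditioning, and moreover $\mathcal{G}_\eps(u)$ depends on the very choice $X_u$ being integrated (through $\cZ_u$). The resolution is precisely the monotone relaxation $\mathbf{1}_{\mathcal{G}_\eps(i,t)}\le\mathbf{1}_{\mathcal{G}_\eps(i,u-1)}$, which retains exactly the past constraint needed to bound $\cZ_{u-1}$ while discarding the future ones; this is legitimate because only an upper bound is sought. A secondary technical point is checking that the replacement $1-f(s,W_i)/\cZ_{u-1}\le 1-f(s,W_i)/(\alpha_{+\eps}(u-1))$ keeps the factors nonnegative, which holds on $\mathcal{G}_\eps$ once $n$ is large, since $i_s\ge\eta n$ and $f(s,\cdot)$ is bounded on $B$.
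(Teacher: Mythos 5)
Your proposal is correct and follows essentially the same route as the paper: the paper likewise intersects each per-step event with $\mathcal G_\eps(s)$ (its $\tilde{\cD}_s=\cD_s\cap\mathcal G_\eps(s)$), peels conditional expectations backwards via the tower property so that the already-present indicator $\mathbf{1}_{\mathcal G_\eps(u-1)}$ justifies replacing $\cZ_{u-1}$ by $\alpha_{\pm\eps}(u-1)$, and then converts the resulting deterministic products into the power-law factors. The only cosmetic difference is that you evaluate the products via $1-x\le e^{-x}$ and harmonic sums while the paper invokes Stirling's formula (Lemma~\ref{lem:simple_stir}); both give the same uniform $1+O(1/t)$ error since $f(s,\cdot)$ is bounded on $B$ and $i\ge\eta n$.
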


\begin{cor} \label{cor:1}
Let $B \in \hyperlink{family}{\mathscr{F}}$ and $0 < \delta, \varepsilon, \eta \leq 1/2$. Then, there exists $N = N(\delta, \varepsilon, \eta)$ such that, for all $t \geq N$,
\[
\frac{\E{N_{\eta, k}(t,B)}}{\ell t} \leq (1+\delta) \left(\frac{1+\eps}{1-\eps}\right)^k \E{ \frac{\alpha_{+\eps}}{f(k, W)+\alpha_{+\eps}} \prod_{s=0}^{k-1} \frac{f(s, W)}{f(s, W)+\alpha_{+\eps}} \mathbf{1}_{B}(W)} + C \eta^{1/(k+2)} +  \delta,
\]
where the constant $C$ may depend on $k$ and $B$ but not on $n$ and not on the choices of $\delta, \varepsilon, \eta$.
In particular, for each $B \in \mathscr{F}$ and $k \in \mathbb{N}_0$,
\[\limsup_{t \to \infty} \E{N_k(t,B)}/ \ell t \leq p^{\alpha}_k(B).\]
\end{cor}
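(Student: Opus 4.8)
The plan is to substitute the per-term estimate of Proposition~\ref{prop:upper_bounds} into the bound~\eqref{eq:exp-approx} and then recognise the resulting double sum as an instance of Corollary~\ref{cor:summation}. Dividing~\eqref{eq:exp-approx} by $\ell t$, for $t \geq N'/\eta$ we have
\[
\frac{\E{N^{\eta}_{k}(t,B)}}{\ell t} \leq \frac{1}{t}\sum_{\eta t < i \leq t}\;\sum_{\Ind_k \in \binom{[i+1..t]}{k}} \Prob{\cE_{i}(\Ind_k, B)\cap\mathcal{G}_{\eps}(i,t)} + \delta .
\]
Replacing each summand by the upper bound of Proposition~\ref{prop:upper_bounds} (a factor $1+O(1/t)$ times the expectation of a product of survival ratios $(i_s/i_{s+1})^{f(s,W)/\alpha_{+\eps}}$ and of one-step selection probabilities bounded using $\cZ_{\cdot}\geq\alpha_{-\eps}(\cdot)$), and noting that every summand is nonnegative and the outer sum is finite, I would invoke Tonelli's theorem to move the expectation over $W$ outside the double sum.

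For each fixed value of $W$, the inner double sum is then exactly the object estimated in Corollary~\ref{cor:summation}, with survival exponents $e_s = f(s,W)/\alpha_{+\eps}$ and selection coefficients $f_s = f(s,W)/\alpha_{-\eps}$ (the asymmetry in $\alpha_{\pm\eps}$ coming from the fact that the partition function is lower-bounded in the selection probabilities but upper-bounded in the survival factors). Substituting these parameters into the leading term $\frac{1}{e_k+1}\prod_{j=0}^{k-1}\frac{f_j}{e_j+1}$ of Corollary~\ref{cor:summation} yields
\[
\left(\frac{1+\eps}{1-\eps}\right)^{k}\frac{\alpha_{+\eps}}{f(k,W)+\alpha_{+\eps}}\prod_{s=0}^{k-1}\frac{f(s,W)}{f(s,W)+\alpha_{+\eps}},
\]
where the factor $\big((1+\eps)/(1-\eps)\big)^{k} = (\alpha_{+\eps}/\alpha_{-\eps})^{k}$ is precisely what the mismatch between the two denominators produces. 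Crucially, since $B\in\mathscr{F}$ the functions $w\mapsto f(s,w)$ are bounded on $B$, so $e_s$ and $f_s$ remain in a fixed compact range for all $W\in B$ and $\eps\leq 1/2$; this keeps the error terms $\theta'(\eta)$ and $O\big(t^{-1/(k+2)}+\log^{k+1}(t)/t\big)$ of Corollary~\ref{cor:summation} uniform in $W$, with constants depending only on $k$ and $B$.

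It then remains to collect the error terms. Taking $\E{\cdot\,\mathbf{1}_B}$ turns the leading term into the asserted expectation, while $|\E{\theta'(\eta)\mathbf{1}_B}| \leq M'\mu(B)\eta^{1/(k+2)}$. I would choose $N=N(\delta,\eps,\eta)$ large enough that for $t\geq N$ the prefactor satisfies $1+O(1/t)\leq 1+\delta$ and the vanishing remainder $O\big(t^{-1/(k+2)}+\log^{k+1}(t)/t\big)$ is at most $\eta^{1/(k+2)}$; folding these together produces a single term $C\eta^{1/(k+2)}$ with $C=C(k,B)$, the additive $\delta$ being inherited directly from~\eqref{eq:exp-approx}. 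This gives the claimed finite-$t$ inequality.

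Finally, for the ``in particular'' statement I would use $N_k(t,B)\leq N^{\eta}_{k}(t,B)+\eta\ell t$ together with the finite-$t$ bound to get, for every admissible $\delta,\eps,\eta$,
\[
\limsup_{t\to\infty}\frac{\E{N_k(t,B)}}{\ell t} \leq (1+\delta)\left(\frac{1+\eps}{1-\eps}\right)^{k}\E{\frac{\alpha_{+\eps}}{f(k,W)+\alpha_{+\eps}}\prod_{s=0}^{k-1}\frac{f(s,W)}{f(s,W)+\alpha_{+\eps}}\mathbf{1}_{B}} + C\eta^{1/(k+2)}+\delta+\eta,
\]
and then let $\eta\to 0$, $\delta\to 0$ and $\eps\to 0$ in turn, using dominated convergence (with dominating function $\mathbf{1}_B$, each factor being at most $1$) to pass to the limit $\alpha_{+\eps}\to\alpha$ inside the expectation; the right-hand side collapses to $p_k(B)$. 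I expect the only genuinely delicate point to be this uniformity: Corollary~\ref{cor:summation} is uniform only over bounded parameter ranges, so the hypothesis $B\in\mathscr{F}$ is exactly what is needed to obtain a constant $C$ independent of $t$ and of $\delta,\eps,\eta$; the rest is routine limit bookkeeping.
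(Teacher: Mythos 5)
Your proposal is correct and follows essentially the same route as the paper: substitute the bound of Proposition~\ref{prop:upper_bounds} (with the one-step selection factors $f(s,W)/\bigl(\alpha_{-\eps}(i_{s+1}-1)\bigr)$ retained) into \eqref{eq:exp-approx} and evaluate the resulting sum with the summation estimates of Subsection~\ref{subsec:summation}, the $\bigl((1+\eps)/(1-\eps)\bigr)^k$ factor arising exactly as you say from the $\alpha_{+\eps}/\alpha_{-\eps}$ mismatch. The only cosmetic difference is that the paper cites Lemma~\ref{lem:prob-sum} where you invoke Corollary~\ref{cor:summation}; for fixed $W$ the two are interchangeable here (the numerators $f(s,W)/\alpha_{-\eps}$ factor out of the sum), and your choice is if anything the cleaner reference since its uniformity in the $f_j$'s, combined with $B\in\mathscr{F}$, is precisely what makes $C$ independent of $t$ and of $\delta,\eps,\eta$.
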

\begin{proof}
This follows from applying~\eqref{eq:exp-approx} and Proposition~\ref{prop:upper_bounds} and then applying Corollary~\ref{cor:summation} with $e_j = f(j,W)/\alpha_{+\eps}$ and $f_j = f(j,W)/\alpha_{-\eps}$ to bound the sum over the collection of indices. Note that the term $\left(\frac{1 + \eps}{1-\eps}\right)^k$ comes from replacing $\alpha_{-\eps}$ by $\alpha_{+\eps}$. 
\end{proof}
We proceed towards the proof of Proposition~\ref{prop:upper_bounds}. Let $\eps, \eta$ be given such that $0 < \varepsilon, \eta \leq 1/2$. For $\eta t < i_0 \leq t $ and
$\Ind_k = \{i_1, \ldots, i_k\} \in {\left\{i_0 +1, \ldots, t\right\} \choose k}$ for each $s \in \left\{i_0 +1, \ldots, t\right\}$, we define 
\[ 
\cD_{s} := 
\begin{cases}
\left\{i\bluee{\rightarrow} s\right\}, & \text{ if } s\in\Ind_k,\\
\left\{i\bluee{\not \rightarrow} s\right\}, & \text{ otherwise},
\end{cases} \]
and $\tilde \cD_s  = \cD_s \cap \mathcal G_{\eps}(s).$ We also define $\tilde \cD_{i_0} = \mathcal G_\eps(i_0) \cap \{W_i \in B\}$, and 
for simplicity of notation, write $D_j$ and $\tilde D_j$ for the indicator random variables $\mathbf{1}_{\cD_j}$ and $\mathbf{1}_{\tilde \cD_j}$ respectively.
Note that $\mathcal E_i(\Ind_k, B) \cap \mathcal G_\eps(i_0,t) = \bigcap_{j=i_0}^t \tilde \cD_j$. To bound the probability of this event, we define
\[ 
X_s = \E{\prod_{j=i_{s}+1}^{t} \tilde D_j  \; \bigg | \; \mathscr T_{i_s}}  \tilde D_{i_s}, 
\quad s \in \left\{0, \ldots, k\right\}\]
and observe that $\E{X_0} =  \Prob{\bigcap_{s=i_0}^t \tilde \cD_s}$ is the sought after probability. 

\begin{lemma} \label{lem:u}
For $s \in \left\{0, \ldots, k\right\}$, we have
\begin{equation} \label{eq:backwards}
X_s \leq  \prod_{u=i_{k}+1}^{n} \left(1 - \frac{f(k,W)}{\alpha_{+\eps}(u- 1)}\right) \left(\prod_{j = s}^{k-1} \frac{f(j,W)}{\alpha_{-\eps}(i_{j+1}-1)}\prod_{j'= i_j +1}^{i_{j+1} -1} \left(1 - \frac{f(j,W)}{\alpha_{+\eps}(j' -1)}\right)\right)\tilde{D}_{i_s},
\end{equation}
where we interpret any empty products (for example when $i_k = n$) as equal to $1$. In particular, 
\begin{align}
\label{eq:upp-bound}
\E{X_0} &\leq \E{\prod_{u=i_{k}+1}^{n} \left(1 - \frac{f(k,W)}{\alpha_{+\eps}(u- 1)}\right) \left(\prod_{j = 0}^{k-1} \frac{f(j,W)}{\alpha_{-\eps}(i_{j+1}-1)} \prod_{j'= i_j +1}^{i_{j+1} -1} \left(1 - \frac{f(j,W)}{\alpha_{+\eps}(j' -1)}\right)\right) \mathbf{1}_{B}(W)}. \qquad
\end{align}
\end{lemma}

\begin{proof}
We prove \eqref{eq:backwards} by backwards induction. For the base case, $s=k$, if $i_k = n$, the inequality is trivial, as $X_k = \tilde{D}_{i_k}$. Thus, assuming $i_k < n$, by the tower property,
\begin{linenomath*}
\begin{align*}
  \E{\prod_{j=i_{k}+1}^{n} \tilde D_j \; \bigg  | \; \mathscr T_{i_k}} & = \E{\E{\tilde D_{n} \; \bigg | \; \mathscr{T}_{n-1}}\prod_{j=i_k+1}^{n-1} \tilde D_j \; \bigg | \; \mathscr T_{i_k}}  \\ &\leq \E{\E{D_{n} \; \bigg | \; \mathscr{T}_{n-1}}\prod_{j=i_k+1}^{n-1} \tilde D_j \; \bigg | \; \mathscr T_{i_k}} \\ &= \E{\left(1 - \frac{f(k,W)}{\mathcal{Z}_{n-1}}\right)\prod_{j=i_k+1}^{n-1} \tilde D_j \; \bigg | \; \mathscr T_{i_k}} \\ &\leq \left(1 - \frac{f(k,W)}{ \alpha_{+\eps}(n-1)}\right) \E{\prod_{j=i_k+1}^{n-1} \tilde D_j \; \bigg | \; \mathscr T_{i_k}},
\end{align*}
\end{linenomath*}
and iterating this argument with the conditional expectation on the right hand side proves the base case. Now, note that for $s \in \left\{0, \ldots, k-1\right\}$ 
\[
    X_s = \E{X_{s+1} \prod_{j=i_{s} +1}^{i_{s+1} -1} \tilde D_{j} \; \bigg | \; \mathscr T_{i_s}} \tilde D_{i_s}.
\]
Applying the induction hypothesis, it suffices to bound the term 
$\E{\prod_{j=i_{s} +1}^{i_{s+1}} \tilde D_{j} \; \bigg | \; \mathscr T_{i_s}}$, and, similar to the base case, we may assume $i_{s} < i_{s+1} - 1$. 
But, then, we have 
\begin{linenomath*}
\begin{align*}
\E{\prod_{j=i_{s} +1}^{i_{s+1}} \tilde D_{j} \; \bigg | \; \mathscr T_{i_s}} & = \E{\E{\tilde D_{i_{s+1}} \; \bigg | \; \mathscr{T}_{i_{s+1} -1}} \prod_{j=i_{s} +1}^{i_{s+1}-2} \tilde D_{j} \; \bigg | \; \mathscr T_{i_s}} \\ &\leq \E{\E{D_{i_{s+1}} \; \bigg | \; \mathscr{T}_{i_{s+1} -1}} \prod_{j=i_{s} +1}^{i_{s+1}-2} \tilde D_{j} \; \bigg | \; \mathscr T_{i_s}}
\\ &\leq \frac{f(s,W)}{\alpha_{-\eps}(i_{s+1} - 1)}\E{\prod_{j=i_s + 1}^{i_{s+1}-2} \tilde{D}_{j} \; \bigg | \; \mathscr T_{i_s}}
\\ & \leq \frac{f(s,W)}{\alpha_{-\eps}(i_{s+1} - 1)}\E{\E{D_{i_{s+1} -1} \; \bigg | \; \mathscr{T}_{i_{s+1} -1}} \prod_{j=i_{s} +1}^{i_{s+1}-2} \tilde D_{j} \; \bigg | \; \mathscr T_{i_s}} 
\\ & \leq \frac{f(s,W)}{\alpha_{-\eps}(i_{s+1} - 1)}\left(1 - \frac{f(s,W)}{\alpha_{+\eps} \left(i_{s+1} -2\right) } \right) \E{ \prod_{j=i_{s} +1}^{i_{s+1}-2} \tilde D_{j} \; \bigg | \; \mathscr T_{i_s}}.
\end{align*}
\end{linenomath*}
Iterating these bounds the inductive step follows in a similar manner to the base case. Finally, noting that $\mathbf{1}_{\tilde{\mathcal{D}}_{i}} \leq \mathbf{1}_{B}(W)$ proves \eqref{eq:upp-bound}. 
\end{proof}

The next lemma follows from a simple application of Stirling's formula, i.e.,~\eqref{eq:stirling_gamma_approx}:
\begin{lemma} \label{lem:simple_stir}
Let $\eta, C > 0$. Then, uniformly over $\eta t \leq a \leq b$ and $0 \leq \beta \leq C$, we have
\[\prod_{j=a+1}^{b-1} \left(1-\frac{\beta}{j-1}\right) =  \left(\frac{a}{b}\right)^\beta \left( 1 + O\left(\frac 1 t\right) \right).\]
\end{lemma}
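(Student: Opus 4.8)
The plan is to convert the finite product into a ratio of Gamma functions and then apply Stirling's approximation \eqref{eq:stirling_gamma_approx}, as the statement advertises. First I would reindex by setting $k = j-1$, so that
\[
\prod_{j=a+1}^{b-1}\left(1 - \frac{\beta}{j-1}\right) = \prod_{k=a}^{b-2}\frac{k-\beta}{k} = \frac{\Gamma(b-1-\beta)\,\Gamma(a)}{\Gamma(a-\beta)\,\Gamma(b-1)},
\]
using the identities $\prod_{k=a}^{b-2}(k-\beta) = \Gamma(b-1-\beta)/\Gamma(a-\beta)$ and $\prod_{k=a}^{b-2}k = \Gamma(b-1)/\Gamma(a)$, which both follow from the recursion $\Gamma(x+1) = x\Gamma(x)$. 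The degenerate cases $b \in \{a, a+1\}$, where the product is empty, are checked directly and are consistent with the claimed asymptotic, so I would henceforth assume $b \geq a+2$. Note that for $t$ large enough $a - \beta \geq \eta t - C > 0$, so all four Gamma arguments are positive and bounded below by a fixed multiple of $t$.

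Next I would establish the key one-variable estimate: uniformly over $0 \leq \beta \leq C$ and $z \geq \eta t$,
\[
\frac{\Gamma(z-\beta)}{\Gamma(z)} = z^{-\beta}\bigl(1 + O(1/t)\bigr).
\]
This is where \eqref{eq:stirling_gamma_approx} enters. Writing the ratio out via Stirling and grouping the powers of $z$, the content reduces to the expansion $\bigl(1 - \beta/z\bigr)^{z-\beta-1/2} = e^{-\beta}\bigl(1 + O(1/z)\bigr)$, which I would verify by taking logarithms: $(z-\beta-\tfrac12)\log(1-\beta/z) = -\beta + O(1/z)$, the error being uniform precisely because $\beta \leq C$ is bounded. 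The two prefactors $(1+O(1/z))$ produced by the two applications of \eqref{eq:stirling_gamma_approx} are absorbed into the same error term.

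Applying this estimate with $z = a$ to the factor $\Gamma(a)/\Gamma(a-\beta) = a^{\beta}(1+O(1/t))$ and with $z = b-1$ to $\Gamma(b-1-\beta)/\Gamma(b-1) = (b-1)^{-\beta}(1+O(1/t))$, and then rewriting $(b-1)^{-\beta} = b^{-\beta}(1-1/b)^{-\beta} = b^{-\beta}(1+O(1/t))$ (again using $\beta \leq C$ and $b \geq \eta t$), I would multiply the two pieces to obtain
\[
\prod_{j=a+1}^{b-1}\left(1 - \frac{\beta}{j-1}\right) = a^{\beta} b^{-\beta}\bigl(1 + O(1/t)\bigr) = \left(\frac{a}{b}\right)^{\beta}\bigl(1 + O(1/t)\bigr),
\]
as required. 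The main obstacle is purely the bookkeeping of uniformity: one must confirm that the errors both in Stirling's expansion and in the logarithmic expansion of $(1-\beta/z)^{z-\beta-1/2}$ are uniform in $\beta \in [0,C]$ and in $a,b \geq \eta t$, which is exactly what the boundedness of $\beta$ and the lower bound $a,b \geq \eta t$ supply (the latter converting every $O(1/a)$ or $O(1/b)$ into $O(1/t)$). An alternative route, avoiding Gamma functions altogether, is to take the logarithm of the product directly and combine $\log(1-x) = -x + O(x^2)$ with $\sum_{k=a}^{b-2} 1/k = \log(b/a) + O(1/t)$; this is arguably cleaner, though it obscures the advertised link to Stirling's formula.
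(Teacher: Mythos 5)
Your proposal is correct and follows exactly the route the paper intends: the paper gives no argument beyond the remark that the lemma ``follows from a simple application of Stirling's formula'' together with a \verb|\qed|, and your Gamma-function rewriting plus the uniform estimate $\Gamma(z-\beta)/\Gamma(z)=z^{-\beta}(1+O(1/t))$ is a complete and careful implementation of that. The uniformity bookkeeping (using $\beta\le C$ and $a,b\ge\eta t$) and the treatment of the empty-product cases are handled properly, so there is nothing to add.
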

\qed
\begin{proof}[Proof of Proposition~\ref{prop:upper_bounds}]
We take the upper bound $\E{X_0}$ from Lemma~\ref{lem:u} and bound each of the products by applying Lemma~\ref{lem:simple_stir}.
\end{proof}
\subsection{Deducing Convergence of the Mean of \texorpdfstring{$N_{k}(t,B)/\ell t$}{}} \label{subsec:deducing-convergence}
In this subsection we deduce a lower bound on $\liminf_{t \to \infty} \E{N_k(t,B)}/ \ell t$
on measurable sets $B \in \hyperlink{family}{\mathscr{F}}$.
In what follows, denote by $N_{\geq M}(t,B)$ the number of vertices of out-degree $\geq \ell M$ with weight belonging to $B$. Moreover, let $N(t,B) = N_{\geq 0}(t,B)$ denote the total number of vertices at time $t$ with weight belonging to $B$. 
\begin{lemma} \label{lem:big-mass-dies}
For any measurable set $B$, we have, $\limsup_{t \to \infty} \frac{N_{\geq M}(t,B)}{\ell t} \leq \frac{1}{M}$ almost surely.
\end{lemma}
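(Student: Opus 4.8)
The plan is to reduce immediately to the case $B = \mathbb{R}$ and then prove the bound by a direct counting argument on the dynamics, exploiting the two elementary identities governing the process: at every time-step exactly one existing vertex is \emph{selected} as a parent, and exactly $\ell$ new vertices are created. First I would note that since $N_{\geq M}(t,B) \leq N_{\geq M}(t,\mathbb{R})$ for every measurable $B$, it suffices to bound the total count $N_{\geq M}(t,\mathbb{R})$, i.e.\ the number of vertices whose out-degree is at least $\ell M$.

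The core observation is a deterministic one, valid for every realisation of the tree. A vertex $v$ with $\outdeg{v,\cT_t} \geq \ell M$ must have been selected as a parent on at least $M$ distinct time-steps in $[1 \, . \, . \, t]$, since each selection increases its out-degree by exactly $\ell$. Because precisely one vertex is selected per time-step, the sets of selection-times associated to distinct vertices are pairwise disjoint, and the union of all selection-times has cardinality exactly $t$. Charging to each vertex counted by $N_{\geq M}(t,\mathbb{R})$ its first $M$ selection-times therefore yields $M \cdot N_{\geq M}(t,\mathbb{R}) \leq t$. Dividing by $\ell t$ and taking $\limsup_{t\to\infty}$ gives the desired tail bound; equivalently, writing $a_k = N_k(t,\mathbb{R})$, this is the statement $\sum_{k \geq M} a_k \leq \tfrac{1}{M}\sum_{k\geq M} k\, a_k \leq \tfrac1M \sum_k k\, a_k = t/M$, using the edge-count identity $\sum_k k\ell\, a_k = \ell t$ already recorded in the proof of Theorem~\ref{th:edge-asymptotics}.

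To obtain the precise constant I would combine the selection-count with the vertex-count identity $\sum_k a_k = 1 + \ell t$ (equivalently, \#vertices $=$ \#edges $+1$ in a tree): each qualifying vertex is itself one of the $1+\ell t$ vertices in addition to consuming its $M$ selection-steps, and the plan is to package $v$ together with its first $M$ selection-batches into a single block so that these blocks are disjoint across the vertices counted by $N_{\geq M}(t,\mathbb{R})$. The main obstacle is exactly this disjointness book-keeping: a high-degree vertex $v$ may itself have been created as a selected child of another high-degree vertex $v'$, so naively summing ``self plus selections'' risks double-counting, and one must verify that the blocks can be chosen to avoid this (for instance by charging creation-slots only to the parent and selection-slots only to the vertex selected). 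Once the blocks are genuinely disjoint, their total size is at most $(1+o(1))\,\ell t$, the $O(1)$ contributions from the root and from the final incomplete batches are negligible after dividing by $\ell t$, and the $\limsup$ bound follows; I would note in passing that for $\ell \geq 2$ the crude estimate $M\, N_{\geq M}(t,\mathbb{R}) \leq t$ already delivers $N_{\geq M}(t,\mathbb{R})/(\ell t) \leq 1/(\ell M) \leq 1/(M+1)$, so the delicate accounting is only needed to sharpen the constant in the remaining regime.
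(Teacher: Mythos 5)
Your core counting step is correct and is essentially the paper's own argument in different clothing: each vertex of out-degree at least $\ell M$ has been selected on at least $M$ distinct time-steps (equivalently, owns at least $\ell M$ out-edges), these selection-times are disjoint across distinct vertices, and there are exactly $t$ of them, so $M\,N_{\geq M}(t,\mathbb{R}) \leq t$ and hence $N_{\geq M}(t,B)/(\ell t) \leq 1/(\ell M)$ holds deterministically for every realisation. The paper phrases the same estimate by counting the children of each qualifying vertex inside $\mathcal{T}_t$. For $\ell\geq 2$ this already yields $1/(\ell M)\leq 1/(M+1)$, as you observe, and in fact the weaker constant $1/(\ell M)$ (or $1/M$) is all that is ever used: the lemma's only application is in the proof of Proposition~\ref{prop:conv-of-mean}, where $M$ is simply taken large enough to push the bound below $\epsilon'/2$.

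The genuine gap is the proposed refinement to the constant $1/(M+1)$ when $\ell=1$. The disjointness obstacle you flag is not a book-keeping nuisance that a cleverer charging scheme can remove; it is fatal to any deterministic packing argument. Take $\ell=1$ and consider the ``broom'': a directed path $v_0\to v_1\to\cdots\to v_k$ in which each $v_i$ is additionally given enough leaf children to have out-degree exactly $M$. This is a legitimate recursive-tree trajectory; at its completed stages it has $t=(k+1)M$ edges, $|\mathcal{T}_t|=(k+1)M+1$ vertices and $N_{\geq M}(t,\mathbb{R})=k+1$, so that $N_{\geq M}(t,\mathbb{R})/t = 1/M > 1/(M+1)$ and $(M+1)N_{\geq M}(t,\mathbb{R})=(k+1)M+k+1>|\mathcal{T}_t|$ for every $k\geq 1$: the blocks $\{v\}\cup\mathrm{ch}(v)$ necessarily overlap along the whole spine, and no reassignment of slots repairs this. (The paper's own one-line justification, namely the displayed inequality $|\mathcal{T}_t| \geq (M+1)N_{\geq M}(t,\mathbb{R})$ obtained by ``counting the vertices twice'', fails on the same configuration; the honest double-count gives only $2|\mathcal{T}_t|\geq (M+1)N_{\geq M}(t,\mathbb{R})$, i.e.\ the constant $2/(M+1)$.) The statement you can actually prove, and which suffices everywhere the lemma is invoked, is $\limsup_{t\to\infty} N_{\geq M}(t,B)/(\ell t)\leq 1/(\ell M)$; your first argument establishes this completely, and I would stop there rather than chase the sharper constant.
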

\begin{proof}
Since we add $\ell$ vertices at each time-step, we have $\limsup_{t \to \infty} \frac{\left|\mathcal{T}_{t}\right|}{\ell t} = 1$. However, $\left | \mathcal{T}_t \right | \geq M N_{\geq M}(t, \mathbb{R})$, since the right-side only provides a lower bound for the number of vertices in the tree incident to those with out-degree at least $M$. The result follows by dividing both sides by $M \ell t$ and sending $t$ to infinity. 
\end{proof}

\subsubsection{Proof of Proposition~\ref{prop:conv-of-mean}}
\label{subsub-prop}
\begin{proof}
Recall that Corollary~\ref{cor:1} showed that for each $B \in \mathscr{F}$ and $k \in \mathbb{N}_0$,
\[\limsup_{t \to \infty} \E{N_k(t,B)}/ \ell t \leq p^{\alpha}_k(B).\]
Now, suppose that Proposition~\ref{prop:conv-of-mean} fails, so that, in particular there exists some set $B' \in \mathscr{F}$ and $k' \in \mathbb{N}_{0}$ such that 
\[ \liminf_{t \to \infty} \frac{\E{N_{k'}(t,B')}}{\ell t} < p^{\alpha}_{k'}(B').\]
Thus, for some $\epsilon' > 0$, we have 
$\liminf_{t \to \infty} \frac{\E{N_{k'}(t,B')}}{\ell t} \leq p^{\alpha}_{k'}(B) - \epsilon'$. Now, using Lemma~\ref{lem:big-mass-dies}, choose $M > \max{\left \{k', \frac{2}{\epsilon'}\right \}}$, so that $\limsup_{t \to \infty} \frac{N_{\geq M}(t,B')}{\ell t} < \epsilon'/2$. Then, recalling~\eqref{eq:to-delete},
\begin{linenomath*}
\begin{align}
\label{eq:lower1}
\liminf_{t \to \infty} \E{\sum_{k=0}^{M} \frac{N_{k}(t,B')}{\ell t}} & \leq \liminf_{t \to \infty} \E{\frac{N_{k'}(t,B')}{\ell t}} + \sum_{k \neq k'} \limsup_{t \to \infty} \E{\frac{N_{k}(t,B')}{\ell t}} 
\\ \nonumber & \leq \left(\sum_{k=0}^{\infty} p^{\alpha}_{k}(B')\right) - \epsilon' \leq \mu(B') - \epsilon'.  
\end{align}
\end{linenomath*}
On the other hand, by Fatou's Lemma, we have 
\begin{linenomath*}
\begin{align}
\label{eq:lower2}
\liminf_{t \to \infty} \E{\sum_{k=0}^{M} \frac{N_{k}(t,B')}{\ell t}} & \geq \E{\liminf_{t \to \infty} \sum_{k=0}^{M} \frac{N_{k}(t,B')}{\ell t}} \\ \nonumber &= \E{\liminf_{t \to \infty}\left(\frac{N(t,B')}{\ell t} - \frac{N_{\geq M}(t,B')}{\ell t}\right)}
\geq \mu(B') - \epsilon'/2,
\end{align}
\end{linenomath*}
where the last inequality follows from the strong law of large numbers.
But then, combining~\eqref{eq:lower1} and \eqref{eq:lower2}, we have $\mu(B') - \epsilon' \geq \mu(B') - \epsilon'/2$, a contradiction.
\end{proof}

\subsection{Second Moment Calculations} \label{subsec:second-moment}
In order to bound the second moment, we apply similar calculations to the start of the section to compute asymptotically the number of pairs of vertices of out-degree $k \ell$ born after time $\eta t$. 
For vertices $i$ and $j$, as in Section~\ref{subsec:upper-bound}, we set $i_0 := \lfloor i/\ell \rfloor$ and $j_0 := \lfloor j/\ell \rfloor$, and note that
\begin{equation} \label{eq:second-moment-sum}
\E{\left(N_{\eta, k}(t,B)\right)^2} = \sum_{\eta t < i_0, j_0 \leq t-k} \sum_{j: \lfloor j/\ell \rfloor = j_0} \sum_{i: \lfloor i/\ell \rfloor = i_0 } \Prob{d_{i}(t) = k, W_i \in B, d_{j}(t) = k,  W_j \in B}. 
\end{equation}
 Note that, in a similar manner to~\eqref{eq:eta-bound}, we have
\[\E{\left|\frac{\left(N_{\eta, k}(t,B)\right)^2}{\ell^2 t^2} - \frac{\left(N_{k}(t,B)\right)^2}{\ell^2 t^2}\right|} \leq \eta \]
so that it suffices to prove that \[\limsup_{\eta \to 0} \limsup_{t \to \infty} \E{\frac{\left(N_{\eta, k}(t, B)\right)^2}{\ell^2 t^2}} \leq (p^{\alpha}_{k}(B))^2.\]

Recall that, for a given $i$ we denote by $\Ind_k$ a collection of natural numbers $i_0 < i_1 < \cdots < i_k \leq t$. Moreover, for a given $j$, we denote by $\Jind_{k}$ a collection of natural numbers $j_0 < j_1 < \cdots < j_k \leq t$. Similar to Section~\ref{subsec:upper-bound}, for $s > j$ we use the notation $j \bluee{\rightarrow} s$ to denote that $j$ is the vertex chosen at the $s$th time-step and likewise, we let $\cE_{j}(\Jind_k, B)$ denote the event that $W_j \in B$ and for all $s \in \left\{j_0 +1, \ldots, t\right\}$, $j \bluee{\rightarrow} s$ if and only if $s \in \Jind_k$. Then we have
\[
\Prob{d_{i}(t) = k, W_i \in B, d_{j}(t) = k,  W_j \in B} = \sum_{\Jind_k \in {\left\{j_0 +1, \ldots, t\right\} \choose k}} \sum_{\Ind_k \in {\left\{i_0 +1, \ldots, t\right\} \choose k}} \Prob{\cE_i(\Ind_k,B) \cap \cE_j(\Jind_k, B)}.
\]
Note that the contribution to the above sum corresponding to terms with $\Ind_{k} \cap \Jind_{k} \neq \varnothing$, and $i \neq j$, is zero, since it is impossible for distinct vertices to be chosen in a single time-step. But then, the terms corresponding to $i = j$ contribute at most $\E{N_{\eta,k}(n, B)} \leq \ell n$ to the right side of~\eqref{eq:second-moment-sum}. Next, for any choice of indices with $\bluee{\Ind_{k} \cap \Jind_{k} = \emptyset}$, there are at most $\ell^2$ pairs of vertices $(i,j)$ born at respective times $(i_0, j_0)$ contributing to the sum in~\eqref{eq:second-moment-sum}.
Recalling the definitions of $\mathcal{G}_{\eps}(t), \mathcal{G}_{\eps}(N_1,N_2)$ and $N' = N'(\eps, \delta)$ from \eqref{def:a} and below in the previous subsection, in a similar manner to~\eqref{eq:exp-approx} we have, for $t \geq N'/\eta$,
\begin{equation} \label{eq:sec-moment}
\E{\left(N_{\eta, k}(t, B) \right)^2} \leq \ell^2 \left( \sum_{\eta t < i_0, j_0 \leq t-k} \sum_{\bluee{\Ind_{k} \cap \Jind_{k} = \emptyset}}  \Prob{\cE_i(\Ind_k,B) \cap \cE_j(\Jind_k, B) \cap \mathcal{G}_{\eps}(i_0, t)} + \delta t^2\right) + \ell t.
\end{equation}
We then have the following:
\begin{prop} \label{prop:upper-bounds-sec}
Let $B \in \hyperlink{family}{\mathscr{F}}$ and $0 < \eps, \eta \leq 1/2$. As $t \to \infty$, uniformly in $\eta t < i_0 \leq j_0 \leq t -k$ and $\Ind_k \in {\left\{i_0 +1, \ldots, t\right\} \choose k}$, $\Jind_k \in {\left\{j_0+1, \ldots, t\right\} \choose k}$ such that 
$\bluee{\Ind_{k} \cap \Jind_{k} = \emptyset}$, and the choice of $\eps$, we have
\begin{linenomath*}
\begin{align} \label{eq:prop-second-moment}
& \nonumber\Prob{\cE_i(\Ind_k,B) \cap \cE_j(\Jind_k, B) \cap \mathcal{G}_{\eps}(i_0, t)} \\ \nonumber 
&\hspace{2cm} \leq (1 + O(1/t)) \E{\left(\frac{i_{k}}{t}\right)^{f(k,W)/\alpha_{+\eps}} \cdot \prod_{s=0}^{k-1} \left(\left( \frac{i_{s}}{i_{s +1}}\right)^{f(s, W)/\alpha_{+\eps}} \frac{f(s,W)}{\alpha_{-\eps}(i_{s+1}-1)}\right) \mathbf{1}_{B}(W)}
\\ & \hspace{4.4cm} \times \E{\left(\frac{j_{k}}{t}\right)^{f(k,W)/\alpha_{+\eps}} \cdot \prod_{s=0}^{k-1} \left(\left( \frac{j_{s}}{j_{s +1}}\right)^{f(s, W)/\alpha_{+\eps}} \frac{f(s,W)}{\alpha_{-\eps}(j_{s+1}-1}\right)\mathbf{1}_{B}(W)}.
\end{align}
\end{linenomath*}
\end{prop}
We leave the details of the proof of this proposition to the reader, as it follows an analogous approach to the proof of Proposition~\ref{prop:upper_bounds}, using a backwards induction argument. 
\begin{proof}[Proof Sketch]
Let $u_1, \ldots, u_{2k}$ denote the indices in $\Ind_k \cup \Jind_k$, and $f_x(i), f_{x}(j)$ denote the fitnesses associated with vertex $i$ and vertex $j$ at time $x$.  Then, when we bound the probabilities $\{i  \bluee{\not \rightarrow} x \} \cap \{j  \bluee{\not\rightarrow} x\}$ for all $x \in \left\{u_{s} +1, \ldots, u_{s+1} - 1\right\}$ from above we obtain terms of the form
\[
\prod_{x = u_{s}+1}^{u_{s+1} - 1} \left(1 - \frac{f_{x}(i) + f_{x}(j)}{\alpha_{+\eps}(x-1)} \right) = \left(\frac{u_{s}}{u_{s+1}} \right)^{f_{x}(i) + f_{x}(j)} \left(1+ O\left(\frac{1}{t}\right)\right),
\]
where the right side follows from Lemma~\ref{lem:simple_stir}. Then, when we evaluate the expectation analogous to the expectation appearing in~\eqref{eq:upp-bound}, we obtain an expectation involving products of terms dependent on $W_i$ and $W_j$, i.e., the weights associated with vertex $i$ and vertex $j$. These terms separate into a product of expectations by the independence of the random variables $W_i$, $W_j$, and finally, many of the products telescope to yield the right side of~\eqref{eq:prop-second-moment}.
\end{proof}

\subsubsection{Proof of Proposition~\ref{prop:conv-second-moment}}
\label{subsub-cor}
\begin{proof}
We apply Proposition~\ref{prop:upper-bounds-sec} to bound the summands in \eqref{eq:sec-moment}. Then, as we are looking for an upper bound, we may drop the condition $\bluee{\Ind_{k} \cap \Jind_{k} = \emptyset}$ when evaluating the sum. But then, by Corollary~\ref{cor:summation}, we have, uniformly in $\eps$ and $\eta$,
\begin{linenomath*}
\begin{align*}
& \sum_{\eta t < i_0, j_0 \leq t} \sum_{\Ind_k, \Jind_k} \E{\left(\frac{i_k}{t}\right)^{f(k,W)/\alpha_{+\eps}}\cdot \prod_{s=0}^{k-1} \left(\frac{i_s}{i_{s+1}}\right)^{f(s,W)/\alpha_{+\eps}}\frac{f(s,W)}{\alpha_{-\eps} (i_{s+1}-1)} \mathbf{1}_{B}(W)} \nonumber \nonumber \\ &\hspace{2.5cm}
\times \E{\left(\frac{j_k}{t}\right)^{f(k,W)/\alpha_{+\eps}} \cdot \prod_{s=0}^{k-1} \left(\frac{j_s}{j_{s+1}}\right)^{f(s,W)/\alpha_{+\eps}}\frac{f(s,W)}{\alpha_{-\eps} (j_{s+1}-1)} \mathbf{1}_{B}(W)} \\
& \leq \left(\frac{1+\eps}{1-\eps}\right)^{2k} \left( \E{\frac{\alpha_{+\eps}}{f(k,W)+\alpha_{+\eps}} \prod_{s=0}^{k-1} \frac{f(s,W)}{f(s,W)+\alpha_{+\eps}} \mathbf{1}_{B}(W)} \right)^2+ O\left(t^{-1/(k+2)}\right) + C'\eta^{1/{k+2}},
\end{align*}
\end{linenomath*}
for some universal constant $C' >0$, depending only on $B, f$. The result follows. 
\end{proof}
\redd{
 \section*{Acknowledgements}
 I would like to thank my supervisor Nikolaos Fountoulakis for his guidance and providing useful feedback on earlier drafts. I would also like to thank C\'{e}cile Mailler and Henning Sulzbach, whose collaborative work (along with Nikolaos) on a previous project introduced me to some of the techniques used in this paper. Finally, I would like to thank the anonymous referees for their helpful comments, which greatly improved the presentation of this paper, including the removal of some unnecessary assumptions in the statements of Theorem~\ref{thm:cond-edge-dist} and Theorem~\ref{th:conn-transitions}.}
\bibliographystyle{amsplain}
\bibliography{ref}

\end{document}